\newtheorem{definition}{\bf Definition}[section]
\newtheorem{theorem}[definition]{\bf Theorem}
\newtheorem{lemma}{\bf Lemma}[section]
\newtheorem{remark}[definition]{\bf Remark}
\begin{document}
\title{\bf On some parabolic systems arising from a nuclear reactor model with nonlinear boundary conditions\\[3mm]}
\author{{\large Kosuke Kita}\\[3mm]
	Graduate School of Advanced Science and Engineering, \\ Waseda University, 3-4-1 Okubo Shinjuku-ku, Tokyo, 169-8555, JAPAN\\[5mm]
		{\large Mitsuharu \^{O}tani}\\[3mm]
	Department of Applied Physics, School of Science and Engineering, \\ Waseda University, 3-4-1 Okubo Shinjuku-ku, Tokyo, 169-8555, JAPAN\\[5mm]
		{\large Hiroki Sakamoto}\\[3mm]
	Hitachi-GE Nuclear Energy, Ltd.\\ 3-1-1, Saiwai-cho, Hitachi-shi, Ibaraki-ken, 317-0073, JAPAN\\[5mm]}
\date{}
%
%
%
\footnotetext[1]{
2010 {\it Mathematics Subject Classification.} Primary: %
35K57; 
Secondary: %
35K61, 
35Q79.
\\
Keywords: initial-boundary problem, reaction diffusion system, nonlinear boundary conditions, threshold property.
}
\footnotetext[2]{Partly supported by the Grant-in-Aid 
	for Scientific Research, \# 15K13451,
	the Ministry of Education, Culture, Sports, Science and Technology, Japan.}
\footnotetext[3]{e-mail : kou5619@asagi.waseda.jp}
\maketitle
\noindent
{\bf Abstract.}
In this paper, we are concerned with a reaction diffusion system arising from a nuclear reactor model in bounded domains with nonlinear boundary conditions.  
We show the existence of a stationary solution and its ordered uniqueness. It is also shown that every positive stationary solution possesses threshold property to determine blow-up or globally existence for solutions of nonstationary problem. \\

\newpage

\section{Introduction}
\hspace{\parindent}We consider the following initial-boundary value problem for a nonlinear reaction diffusion system:
\begin{equation}
	\tag{NR}
	\label{NR}
	\left\{
	\begin{aligned}
		& \partial_t u_1-\Delta u_1 = u_1u_2 - bu_1, && x\in\Omega,~t>0,\\
		& \partial_t u_2-\Delta u_2 = au_1,  && x\in\Omega,~t>0,\\
		& \partial_{\nu}u_1 + \alpha u_1 = \partial_{\nu}u_2 + \beta|u_2|^{\gamma-2}u_2 = 0, && x\in\partial\Omega,~t>0,\\
		& u_1(x,0)=u_{10}(x)\ge0,~u_2(x,0)=u_{20}(x)\ge0, &&x\in\Omega,
	\end{aligned}
	\right.
\end{equation}
where \(\Omega\subset\mathbb{R}^N\) is a bounded domain with smooth boundary \(\partial\Omega\), 
\(\nu\) denotes the unit outward normal vector on \(\partial\Omega\) and \(\partial_\nu\) is outward normal derivative, i.e., \(\partial_\nu u_i = \nabla u_i\cdot \nu \) (\(i=1,2\)).
Moreover \(u_1\), \(u_2\) are real-valued unknown functions, \(a\) and \(b\) are given positive constants. As for the parameters appearing in the boundary condition, we assume \(\alpha\in[0,\infty)\), \(\beta\in(0,\infty)\) and \(\gamma\in[2,\infty)\). We note that the boundary condition for \(u_1\) becomes the homogeneous Neumann boundary condition when \(\alpha=0\), and the boundary condition for \(u_2\) gives the Robin boundary condition when \(\gamma=2\). Finally, \(u_{10}\), \(u_{20}\in L^{\infty}(\Omega)\) are 
given nonnegative initial data.

This system describes diffusion phenomena of neutrons and heat in nuclear reactors by taking the heat conduction into consideration, introduced by Kastenberg and Chambr\'{e} \(\cite{KC1}\). In this model \(u_1\) and \(u_2\) represent the neutron density and the temperature in nuclear reactors respectively. There are many studies on this model under various boundary conditions, for example, \(\cite{C1}\), \(\cite{C2}\), \(\cite{GW1}\), \(\cite{GW2}\), \(\cite{JLZ}\), \(\cite{R}\) and \(\cite{Y}\). Many of them are concerned with the existence of positive steady-state solutions and the long-time behavior of solutions.

The original problem for \(\eqref{NR}\):
\begin{equation}
\label{Kastenberg}
\left\{
\begin{aligned}
& \partial_t u_1-\Delta u_1 = u_1u_2 - bu_1, && x\in\Omega,~t>0,\\
& \partial_t u_2 = au_1 - cu_2,  && x\in\Omega,~t>0,\\
& u_1 = 0, && x\in\partial\Omega,~t>0,\\
& u_1(x,0)=u_{10}(x)\ge0,~u_2(x,0)=u_{20}(x)\ge0, &&x\in\Omega,
\end{aligned}
\right.
\end{equation}
for some \(c>0\) is studied by \(\cite{R}\). In \(\eqref{Kastenberg}\), the negative feedback \(-cu_2\) from the heat into itself is considered instead of the diffusion term. In Rothe's book \(\cite{R}\), the boundedness and the convergence to equilibrium for \(\eqref{Kastenberg}\) are examined in detail.

In \(\cite{GW1}\), our system is studied with \(\alpha=0\) and \(\gamma=2\), i.e., with the homogeneous Neumann boundary condition and Robin boundary condition:
\begin{equation}
	\label{1.2}
	\left\{
	\begin{aligned}
		& \partial_t u_1-\Delta u_1 = u_1u_2 - bu_1, && x\in\Omega,~t>0,\\
		& \partial_t u_2-\Delta u_2 = au_1,  && x\in\Omega,~t>0,\\
		& \partial_{\nu}u_1= \partial_{\nu}u_2 + \beta u_2 = 0, && x\in\partial\Omega,~t>0,\\
		& u_1(x,0)=u_{10}(x),~u_2(x,0)=u_{20}(x), &&x\in\overline\Omega.
	\end{aligned}
	\right.
\end{equation}
They showed the existence and the ordered uniqueness of positive stationary solution for \(N\in[2,5]\). They also investigated some threshold property to determine blow-up or globally existence. Moreover, in \(\cite{Y}\) the case where \(\beta=0\), that is, the homogeneous Neumann boundary condition for \(u_2\) is studied. The author of \(\cite{Y}\) discussed the stability region and the instability region of \(\eqref{1.2}\) and give an upper bound and a lower bound on the blowing-up time for a solution which blows up in finite time.

The following system with the homogeneous Dirichlet boundary conditions:
\begin{equation}
	\label{1.3}
	\left\{
	\begin{aligned}
		& \partial_t u_1-\Delta u_1 = u_1u_2^p - bu_1, && x\in\Omega,~t>0,\\
		& \partial_t u_2-\Delta u_2 = au_1,  && x\in\Omega,~t>0,\\
		& u_1= u_2 = 0, && x\in\partial\Omega,~t>0,\\
		& u_1(x,0)=u_{10}(x),~u_2(x,0)=u_{20}(x), &&x\in\Omega,
	\end{aligned}
	\right.
\end{equation}
is studied by \(\cite{GW2}\) and \(\cite{JLZ}\). In \(\cite{GW2}\), they showed the existence of positive stationary solutions for the case where \(p=1\) and \(N=2\), \(3\) or \(\Omega\) is bounded convex domain with \(N\in[2,5]\). Furthermore, they obtained the threshold property of stationary solution announced in \(\cite{GW1}\) when \(\Omega\) is ball. In \(\cite{JLZ}\), the existence and ordered uniqueness of positive stationary solutions are considered  for general \(p>0\) and some threshold result is obtained. Moreover the blow-up rate estimate is given for positive blowing-up solutions when \(\Omega\) is ball and \(p\ge1\). 

In this paper, we are concerned with the nonlinear boundary condition. From physical point of view it could be more natural to consider the nonlinear boundary condition rather than the homogeneous Dirichlet boundary condition or Neumann boundary condition. Indeed, if there is no control of the heat flux on the boundary, it is well known that the power type nonlinearity for \(u_2\) is justified by Stefan-Boltzmann's law, which says that the heat energy radiation from the surface of the body is proportional to the fourth power of temperature when \(N=3\).  

The outline of this paper is as follows.
In Section 2, we consider the stationary problem associated with  \(\eqref{NR}\) and show the existence of positive solutions by applying an abstract fixed point theorem based on Krasnosel'skii \(\cite{Kr1}\). 
In order to apply this fixed point theorem, we need to estimate \(L^{\infty}\)-norm of solutions. 
To do this, since we are concerned with nonlinear boundary conditions, we can not rely on the standard linear theory.
To cope with this difficulty, we introduce a new approach, which enables us to obtain strong summability of solutions on the boundary.
Next, we prove the ordered uniqueness for the positive stationary solutions of \(\eqref{NR}\). 
We here use the property of first eigenfunction for the eigenvalue problem associated with the Robin boundary condition.

In Section 3, we study the nonstationary problem. 
In the first subsection, we show the existence of local solutions in time for \(\eqref{NR}\) by abstract theory of maximal monotone operators associated with subdifferential operators together with \(L^{\infty}\)-energy method \(\cite{O1}\). 
In the second subsection, we discuss the large time behavior of solutions to \(\eqref{NR}\) and prove that every positive stationary solution plays a role of threshold to separate global solutions and finite time blowing-up solutions. 
In this procedure, we essentially rely on the comparison theorem. 
Furthermore in order to show the finite time blow-up of solutions of \(\eqref{NR}\), the crucial point is to construct an appropriate subsolution.

\section{Stationary problem}
\hspace{\parindent}In this section, we are going to show the existence of the positive stationary solutions for \(\eqref{NR}\) and prove the ordered uniqueness of them. The stationary problem for \(\eqref{NR}\) is given by
\begin{equation}
	\tag{S-NR}
	\label{SNR}
	\left\{
	\begin{aligned}
		& -\Delta u_1 = u_1u_2 - bu_1, && x\in\Omega,\\
		& -\Delta u_2 = au_1,  && x\in\Omega,\\
		& \partial_{\nu}u_1 + \alpha u_1 = \partial_{\nu}u_2 + \beta|u_2|^{\gamma-2}u_2 = 0, && x\in\partial\Omega.
	\end{aligned}
	\right.
\end{equation}
It should be noticed that since \(\eqref{SNR}\) has no variational structure, it is not possible to apply the variational method to \(\eqref{SNR}\). In order to show the existence of positive stationary solutions to \(\eqref{NR}\), we rely on the abstract fixed point theorem developed by Krasnosell'skii. The crucial step in proving the existence of positive stationary solutions is how to obtain \(L^{\infty}\)-estimates of solutions.

We state a couple of lemmas to prove our results for \(\eqref{SNR}\).

\begin{lemma}[Krasnosel'skii-type fixed point theorem \(\cite{Kr1}\), \(\cite{K1}\)]\label{KFP} Suppose that \(E\) is a real Banach space with norm \(\|\cdot\|\), \(K\subset E\) is a positive cone, and \(\varPhi:K\rightarrow K\) is a compact mapping satisfying \(\varPhi(0)=0\). Assume that there exists two constants \(R>r>0\) and an element \(\varphi\in K\setminus \{0\}\), such that
	\begin{center}
		{\rm (i)} \(u\neq\lambda\varPhi(u)\),~~\(\forall\lambda\in(0,1)\),~~if~~\(u\in K\) and \(\|u\|=r\),
		\\[2mm]
		{\rm (ii)} \(u\neq\varPhi(u)+\lambda\varphi\),~~\(\forall\lambda\ge0\),~~if~~\(u\in K\) and \(\|u\|=R\).
	\end{center}
	Then the mapping \(\varPhi\) possesses at least one fixed point in \(K_1:=\{u\in K;~0<r<\|u\|<R\}\).
\end{lemma}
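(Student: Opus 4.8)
\emph{Proof strategy.} The natural tool is the fixed point index $i(\cdot,\cdot,K)$ for compact maps on the cone $K$ (as developed by Amann, Deimling, Guo--Lakshmikantham). For $\rho>0$ set $K_\rho:=\{u\in K:\|u\|<\rho\}$; this is relatively open and bounded in $K$, with relative boundary $\{u\in K:\|u\|=\rho\}$. We may assume that $\Phi$ has no fixed point $u$ with $\|u\|=r$ or $\|u\|=R$, since otherwise there is essentially nothing left to prove; note that the absence of fixed points on $\{\|u\|=R\}$ is in fact already forced by hypothesis (ii) with $\lambda=0$. Then $i(\Phi,K_r,K)$ and $i(\Phi,K_R,K)$ are both defined, and the plan is to compute them: $i(\Phi,K_r,K)=1$ and $i(\Phi,K_R,K)=0$. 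Additivity of the index across the annular region $K_1=K_R\setminus\overline{K_r}$ then gives $i(\Phi,K_1,K)=0-1=-1\neq0$, whence the solution property of the index yields a fixed point of $\Phi$ in $K_1$.

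\emph{Index on the small ball.} To show $i(\Phi,K_r,K)=1$, I would use the admissible homotopy $h(t,u):=t\,\Phi(u)$, $t\in[0,1]$, which is compact and maps into $K$ because $K$ is a cone. If some $u$ with $\|u\|=r$ satisfied $u=h(t,u)$, then $t=0$ is impossible (as $\|u\|=r>0$), $t\in(0,1)$ is excluded by hypothesis (i), and $t=1$ is excluded by the standing assumption; hence the homotopy is fixed-point free on the relative boundary, and $i(\Phi,K_r,K)=i(0,K_r,K)=1$ by homotopy invariance together with the normalization for the constant map $0\in K_r$.

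\emph{Index on the large ball.} To show $i(\Phi,K_R,K)=0$, I would use the homotopy $g(t,u):=\Phi(u)+t\,\varphi$, $t\in[0,t_0]$, which is compact and $K$-valued since $\varphi\in K$ and $K$ is a cone. By compactness of $\Phi$, $M:=\sup_{u\in\overline{K_R}}\|\Phi(u)\|<\infty$; fix $t_0>(R+M)/\|\varphi\|$. Hypothesis (ii), applied with the various $\lambda\in[0,t_0]$, guarantees that $g(t,\cdot)$ has no fixed point on $\{\|u\|=R\}$ for any $t\in[0,t_0]$, so the homotopy is admissible; moreover $g(t_0,\cdot)$ has no fixed point anywhere in $\overline{K_R}$, because $u=\Phi(u)+t_0\varphi$ would force $t_0\|\varphi\|=\|u-\Phi(u)\|\le R+M<t_0\|\varphi\|$. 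Therefore $i(\Phi,K_R,K)=i(g(t_0,\cdot),K_R,K)=0$.

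\emph{Main obstacle.} There is no deep difficulty beyond having the fixed point index on cones available: once it is in hand, the proof is bookkeeping with its homotopy, normalization, additivity and solution properties. The points genuinely requiring care are (a) checking that both homotopies remain inside $K$ — which is precisely why $K$ is assumed to be a cone and why $\varphi$ is taken in $K$ — and (b) choosing $t_0$ in the second homotopy large enough, using compactness of $\Phi$, so that the endpoint map is globally fixed-point free and its index vanishes; hypotheses (i) and (ii) are exactly the conditions needed to make these two homotopies admissible.
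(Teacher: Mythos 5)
The paper does not prove this lemma at all: it is quoted from Krasnosel'skii \cite{Kr1} and Kwong \cite{K1} and used as a black box, so there is no in-paper argument to compare against. Your proof is the standard fixed-point-index derivation of the cone compression/expansion theorem, and it is correct in substance: the homotopy \(t\varPhi(u)\) together with hypothesis (i) and the normalization property gives \(i(\varPhi,K_r,K)=1\); the homotopy \(\varPhi(u)+t\varphi\) together with hypothesis (ii) and the choice \(t_0>(R+M)/\|\varphi\|\) gives \(i(\varPhi,K_R,K)=0\); and additivity plus the solution property of the index then produce a fixed point in the annular region. Both homotopies are correctly checked to be compact, \(K\)-valued and fixed-point free on the relevant relative boundaries, which is exactly where (i) and (ii) enter.

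One caveat. Your opening reduction ``we may assume \(\varPhi\) has no fixed point with \(\|u\|=r\), since otherwise there is nothing left to prove'' is not accurate for the statement as written: a fixed point on the sphere \(\|u\|=r\) does \emph{not} lie in \(K_1=\{r<\|u\|<R\}\), and hypothesis (i) only excludes \(\lambda\in(0,1)\), not \(\lambda=1\), so such a fixed point is not ruled out. In fact the statement with the open annulus is false as literally quoted: take \(E=\mathbb{R}\), \(K=[0,\infty)\), \(\varPhi(t)=t^{2}\), \(r=1\), \(R=2\), \(\varphi=1\); all hypotheses hold, yet the only nonzero fixed point is \(t=1=r\). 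What your argument actually establishes --- and what is true --- is a fixed point \(u\) with \(r\le\|u\|<R\); to obtain the open annulus one must strengthen (i) to \(\lambda\in(0,1]\). This is a defect of the quoted statement rather than of your proof, and it is harmless in the paper's application, since the computation in the proof of Lemma \ref{C1} in fact excludes \(u=\lambda\varPsi(u)\) for \(\lambda=1\) as well.
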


\begin{lemma}[\(\cite{EOE}\)]
	\label{phi1}
	Let \(\lambda_1\) and \(\varphi_1\) be the first eigenvalue and the corresponding eigenfunction for the problem:
	\begin{equation*}
		\left\{
		\begin{aligned}
			& -\Delta\varphi = \lambda\varphi, && x\in\Omega,\\
			& \partial_{\nu}\varphi + \alpha\varphi = 0, &&x\in\partial\Omega,
		\end{aligned}
		\right.
	\end{equation*}
	where \(\Omega\) is smooth bounded domain in \(\mathbb{R}^N\) and \(\alpha>0\). Then \(\lambda_1>0\) and there exists a constant \(C_{\alpha}>0\) such that
	\begin{equation*}
		\varphi_1(x)\ge C_{\alpha}\hspace{10mm}x\in\overline{\Omega}.
	\end{equation*}
\end{lemma}

Indeed, it is well known that \(\varphi_1>0\) in \(\Omega\) by the strong maximum principle. Suppose that there exists \(x_0\in\partial\Omega\) such that \(\varphi_1(x_0)=0\), then the boundary condition assures \(\partial_\nu\varphi_1(x_0)=-\alpha\varphi_1(x_0)=0\). On the other hand, Hopf's strong maximum principle assures that \(\partial_\nu\varphi_1(x_0)<0\). This is contradiction, i.e., \(\varphi_1(x)>0\) on \(\overline{\Omega}\).

\subsection{Existence of positive solutions}
\begin{theorem}
	\label{EoPSS}
	Let \(1\le N\le5\) and suppose that either
	{\rm (A)} or {\rm (B)} is satisfied {\rm :}
	\begin{equation*}
		\left\{
		\begin{aligned}
			& \mbox{\rm (A)}~~~~~\gamma=2,~~\alpha\le2\beta,\\
			& \mbox{\rm (B)}~~~~~\gamma>2.\\
		\end{aligned}
		\right.
	\end{equation*}
	Then \(\eqref{SNR}\) has at least one positive solution.
\end{theorem}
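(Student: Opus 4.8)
The plan is to apply the Krasnosel'skii-type fixed point theorem (Lemma~\ref{KFP}) to a suitable solution operator associated with \eqref{SNR}. First I would reformulate \eqref{SNR} as a fixed point problem: given $w \ge 0$, solve the linear elliptic system obtained by freezing the coupling, i.e., first solve $-\Delta u_2 = a w$ with $\partial_\nu u_2 + \beta|u_2|^{\gamma-2}u_2 = 0$ (a monotone nonlinear boundary problem, so uniquely solvable, and $u_2 \ge 0$ by the maximum principle when $w \ge 0$), then solve $-\Delta u_1 + b u_1 = u_1 u_2$ — or rather, to keep things linear and to fit the cone structure, solve $-\Delta u_1 + (b + \|u_2\|_\infty)u_1 = (u_2 + \|u_2\|_\infty)w$ type regularization, or more cleanly define $\varPhi(w) = u_1$ where $u_1$ solves $-\Delta u_1 + M u_1 = (u_2 - b + M)u_1$ as a fixed-point-within-a-fixed-point; the cleanest route is likely to set $E = C(\overline\Omega)$ or $L^\infty(\Omega)$, $K$ the nonnegative cone, and let $\varPhi$ send $u_1 \mapsto$ (solve for $u_2$, then solve the scalar semilinear problem for a new $u_1$). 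The compactness of $\varPhi$ comes from elliptic regularity and the compact embedding of $W^{2,p}$ or $C^{1,\theta}$ into $C(\overline\Omega)$, and $\varPhi(0) = 0$ is immediate.

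Next I would verify hypothesis (i) of Lemma~\ref{KFP} for small $r$: if $u = \lambda \varPhi(u)$ with $\lambda \in (0,1)$, then $u$ satisfies a system with an extra factor $\lambda \le 1$, and testing with appropriate functions (or invoking the variational characterization of $\lambda_1$ via the Robin eigenvalue problem of Lemma~\ref{phi1}) shows that if $\|u\|$ is small then the "reaction gain" $u_2$ stays below the threshold needed to sustain a nontrivial $u_1$, forcing $u \equiv 0$; hence no such $u$ exists on $\|u\| = r$ for $r$ small. Concretely, from $-\Delta u_2 = a u_1$ with the radiation boundary condition one estimates $\|u_2\|_\infty \le C(\|u_1\|)$ with $C(s) \to 0$ as $s \to 0$, and then $-\Delta u_1 + b u_1 = u_1 u_2 \le \|u_2\|_\infty u_1$; comparing with the first Robin (or Neumann) eigenvalue forces $u_1 = 0$ once $\|u_2\|_\infty < b$.

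For hypothesis (ii) with large $R$ and the choice $\varphi = \varphi_1$ (the positive first eigenfunction from Lemma~\ref{phi1}), I would argue by contradiction: suppose $u = \varPhi(u) + \lambda \varphi_1$ for some $\lambda \ge 0$ with $\|u\|$ large. Testing the $u_1$-equation against $\varphi_1$ and using $\varphi_1 \ge C_\alpha > 0$, together with a lower bound on $u_2$ in terms of $\int u_1$ (obtained from the $u_2$-equation — here the boundary condition $\partial_\nu u_2 + \beta|u_2|^{\gamma-2}u_2 = 0$ together with hypothesis (A) $\alpha \le 2\beta$ or (B) $\gamma > 2$ is exactly what is needed to control the boundary loss of $u_2$ and keep $u_2$ from collapsing), one derives an inequality of the form $\lambda_1 \int u_1 \varphi_1 \ge (\text{const}\cdot \int u_1 - b)\int u_1 \varphi_1$, which fails once $\int u_1$ (hence $\|u\|$) is large enough. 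This yields a uniform a priori bound, contradicting $\|u\| = R$ for $R$ large. I expect the main obstacle to be precisely the derivation of the required $L^\infty$-bounds and the boundary estimates for $u_2$ under the split hypotheses (A)/(B): because the boundary condition for $u_2$ is nonlinear, the standard linear elliptic $L^\infty$-theory does not apply directly, and one must use the "new approach" alluded to in the introduction — extracting strong summability of $u_2$ on $\partial\Omega$ from the nonlinear flux term, then bootstrapping via elliptic regularity up to $L^\infty$, which is where the dimensional restriction $1 \le N \le 5$ enters (to make the embeddings close).
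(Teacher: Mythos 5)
Your overall framework (Krasnosel'skii's fixed point theorem, condition (i) via smallness of $\|u_2\|_\infty$ relative to $b$, condition (ii) via a priori bounds, with the nonlinear boundary condition as the main obstacle) is the same as the paper's, and your treatment of condition (i) is essentially the paper's argument. But there are two genuine gaps. First, your definition of the fixed-point map is not well posed: the paper takes $E=C(\overline\Omega)\times C(\overline\Omega)$ and defines $\varPsi(u_1,u_2)=(v_1,v_2)$ by the \emph{linear} (in $v_1$) problem $-\Delta v_1+bv_1=u_1u_2$, $-\Delta v_2=au_1$, so that $\varPsi$ is single-valued and compact. Your scalar reduction ``solve for $u_2$, then solve the scalar semilinear problem $-\Delta u_1+bu_1=u_1u_2$ for a new $u_1$'' does not define a map: that scalar problem is an eigenvalue-type problem whose solution set is either trivial or a ray, so it has no canonical nontrivial solution, and your alternative regularizations are left unresolved.

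Second, and more seriously, the a priori estimate for condition (ii) — which is the actual content of the theorem and the place where the hypotheses (A)/(B) and $1\le N\le 5$ do their work — is not carried out, and the route you sketch does not obviously close. You propose a pointwise lower bound on $u_2$ in terms of $\int u_1$ leading to $\lambda_1\int u_1\varphi_1\ge(\mathrm{const}\cdot\int u_1-b)\int u_1\varphi_1$; with the nonlinear Robin condition there is no Green's function, and any such lower bound degenerates at $\partial\Omega$, so this inequality is not justified. The paper's mechanism is different: substitute $u_1=-\frac{1}{a}\Delta u_2$ into the first equation, test the resulting fourth-order problem with the Robin eigenfunction $\varphi_1$, and integrate by parts to obtain the identity
\begin{equation*}
\lambda_1(b+\lambda_1)\int_\Omega u_2\varphi_1\,dx=\int_\Omega|\nabla u_2|^2\varphi_1\,dx+\tfrac{\lambda_1}{2}\int_\Omega u_2^2\varphi_1\,dx+a(b+\lambda_1)\lambda+\int_{\partial\Omega}P(u_2)\varphi_1\,dS,
\end{equation*}
where $P(s)=\beta s^\gamma-\beta(b+\lambda_1)s^{\gamma-1}-\tfrac{\alpha}{2}s^2+\alpha(b+\lambda_1)s$. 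Hypotheses (A)/(B) enter precisely to guarantee $\inf_{s\ge0}P(s)>-\infty$ (not, as you suggest, to ``keep $u_2$ from collapsing''), after which Cauchy--Schwarz absorbs the linear term into the quadratic one and yields $\|u_2\|_{H^1}\le C$ plus boundary estimates, using $\varphi_1\ge C_\alpha>0$. The restriction $N\le5$ then enters in the $H^1$ bound for $u_1$ via the interpolation exponent $\theta=\frac{6-N}{4}\in(0,1)$ and the requirement $\frac{N+2}{4}<2$, before the $L^\infty$ bootstrap. Without this (or an equivalent) derivation, the proof is not complete.
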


We rely on Lemma \ref{KFP} to prove this theorem. In order to apply Lemma \ref{KFP}, we here fix our setting:
\begin{align*}
	&E=C(\overline{\Omega})\times C(\overline{\Omega}),&&u=(u_1,u_2)^{\mathrm{T}}\in E,\\
	&\|u\|=\|u_1\|_{C(\overline{\Omega})}+\|u_2\|_{C(\overline{\Omega})},&&K=\{u\in E;u_1\ge0,u_2\ge0\}.
\end{align*}
Set \(\varphi=(\varphi_1,0)^{\mathrm{T}}\in K\setminus \{0\}\), where \(\lambda_1\) and \(\varphi_1\) are the first eigenvalue and the corresponding eigenfunction of the following eigenvalue problem:
\begin{equation}
	\label{eigen}
	\left\{
	\begin{aligned}
		& -\Delta\varphi = \lambda\varphi, && x\in\Omega,\\
		& \partial_{\nu}\varphi + \alpha\varphi = 0, &&x\in\partial\Omega.
	\end{aligned}
	\right.
\end{equation}
In section 2, we normalize \(\varphi_1(x)\) such that \(\|\varphi_1\|_{L^2}=1\).
For given \(u=(u_1,u_2)^{\mathrm{T}}\in K\), let \(v=(v_1,v_2)^{\mathrm{T}}=\varPsi(u)\) be the unique nonnegative solution (see Br\'ezis \(\cite{B1}\)) of 
\begin{equation}
	\label{fixed}
	\left\{
	\begin{aligned}
		& -\Delta v_1 + bv_1= u_1u_2, && x\in\Omega,\\
		& -\Delta v_2 = au_1,  && x\in\Omega,\\
		& \partial_{\nu}v_1 + \alpha v_1 = \partial_{\nu}v_2 + \beta|v_2|^{\gamma-2}v_2 = 0, && x\in\partial\Omega.
	\end{aligned}
	\right.
\end{equation}
It is clear that \(\varPsi(0)=0\). Moreover \(\varPsi:K\rightarrow K\) is compact. In order to prove the compactness of \(\varPsi\), we use the next Lemma for the following problem:
\begin{equation}
\label{NBC}
\left\{
\begin{aligned}
&-\Delta u = f,&&x\in \Omega,\\
&\partial_\nu u = g, &&x\in\partial\Omega.
\end{aligned}
\right.
\end{equation}
\begin{lemma}
	\label{regularity}
	{\upshape(\(\cite{N1}\))}
	Let \(\Omega\subset\mathbb{R}^N\) be a bounded Lipschitz domain. Suppose that \(f\in L^{\frac{p}{2}}(\Omega)\) and \(g\in L^{p-1}(\partial\Omega)\) with \(p>N\ge 2\), then there exist \(\delta>0\) and a positive constant \(C\) such that every weak solution \(u\) of \(\eqref{regularity}\) belongs to \(C^{0,\delta}(\overline{\Omega})\) and satisfies
	\begin{equation*}
	\|u\|_{C^{0,\delta}(\overline{\Omega})} \le C \left( \|u\|_{L^2(\Omega)} + \|f\|_{L^{\frac{p}{2}}(\Omega)} + \|g\|_{L^{p-1}(\partial\Omega)} \right).
	\end{equation*}
\end{lemma}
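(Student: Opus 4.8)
This is the De Giorgi--Nash--Moser H\"older estimate for the Neumann problem \eqref{NBC}, restricted to a Lipschitz domain and to right-hand sides in the scaling-critical Lebesgue classes; it is quoted here from \cite{N1}. Below I outline the self-contained proof one would give via the Stampacchia truncation method, in two steps. Throughout, $u\in H^1(\Omega)$ is a weak solution, i.e.\ $\int_\Omega\nabla u\cdot\nabla\varphi\,dx=\int_\Omega f\varphi\,dx+\int_{\partial\Omega}g\varphi\,d\sigma$ for all $\varphi\in H^1(\Omega)$; I take $N\ge3$, the case $N=2$ being identical with the trace Sobolev exponent $2(N-1)/(N-2)$ replaced by an arbitrary finite one.

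\emph{Step 1: the uniform bound.} Fix $k\ge0$ and test with the admissible function $\varphi=(u-k)^+\in H^1(\Omega)$. Writing $A_k=\{x\in\Omega:u(x)>k\}$, this gives
\[
\int_{A_k}|\nabla u|^2\,dx=\int_{A_k}f\,(u-k)\,dx+\int_{\partial\Omega\cap A_k}g\,(u-k)\,d\sigma .
\]
Applying to $w=(u-k)^+$ the Sobolev inequality $\|w\|_{L^{2^*}(\Omega)}\le C\|w\|_{H^1(\Omega)}$ and the trace inequality $\|w\|_{L^{2(N-1)/(N-2)}(\partial\Omega)}\le C\|w\|_{H^1(\Omega)}$ (both valid since $\Omega$ is Lipschitz), and estimating the two terms on the right by H\"older's inequality, one uses $p>N$ to see that the conjugate exponents $p/(p-2)$ and $(p-1)/(p-2)$ are strictly below $2^*$ and $2(N-1)/(N-2)$, so each data term is bounded by $\|f\|_{L^{p/2}(\Omega)}$, resp.\ $\|g\|_{L^{p-1}(\partial\Omega)}$, times a \emph{strictly positive} power of $\mathcal A_k:=|A_k|+\sigma(\partial\Omega\cap A_k)$. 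After absorbing the lower-order term $\|(u-k)^+\|_{L^2}$, controlled once $k$ exceeds a threshold $k_0\lesssim\|u\|_{L^2}$, this produces a recursion of the form $\mathcal A_h\le C(h-k)^{-\mu}\bigl(\|f\|_{L^{p/2}}+\|g\|_{L^{p-1}}\bigr)^{\mu}\mathcal A_k^{1+\varepsilon}$ for $h>k\ge k_0$, and Stampacchia's iteration lemma forces $\mathcal A_k=0$ once $k\ge C\bigl(\|u\|_{L^2}+\|f\|_{L^{p/2}}+\|g\|_{L^{p-1}}\bigr)$. Running the same argument for $-u$ gives the two-sided bound $\|u\|_{L^\infty(\Omega)}\le C\bigl(\|u\|_{L^2}+\|f\|_{L^{p/2}}+\|g\|_{L^{p-1}}\bigr)$.

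\emph{Step 2: H\"older continuity.} With $u$ bounded, it remains to prove decay of oscillation on small balls, which is a local matter. On balls $B_{2r}(x_0)\subset\Omega$ this is the classical interior De Giorgi estimate for $-\Delta u=f$: since $f\in L^{p/2}$ with $p/2>N/2$, it yields $\mathrm{osc}_{B_r(x_0)}\,u\le Cr^{\delta}\bigl(\|u\|_{L^\infty(\Omega)}+\|f\|_{L^{p/2}(\Omega)}\bigr)$ for some $\delta=\delta(N,p)\in(0,1)$. For a boundary point $x_0\in\partial\Omega$ one flattens $\partial\Omega$ near $x_0$ by a bi-Lipschitz change of variables, which turns $-\Delta$ into a divergence-form operator with bounded measurable coefficients and the Neumann condition into a conormal condition on a flat piece, the datum being replaced by a comparable one in $L^{p-1}$. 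Running the truncation iteration in the resulting half-ball with cut-off test functions that do \emph{not} vanish on the flattened boundary, and bounding the extra boundary integral $\int g\,w\,d\sigma$ exactly as in Step 1 (using $g\in L^{p-1}$ with $p-1>N-1$), one obtains the same oscillation decay up to $\partial\Omega$. A covering/Campanato argument then assembles the local estimates into $u\in C^{0,\delta}(\overline\Omega)$ together with the stated bound.

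\emph{Main obstacle.} The delicate point is the behavior at the boundary: since $\Omega$ is only Lipschitz, $\partial\Omega$ cannot be straightened smoothly, so neither Schauder nor Calder\'on--Zygmund theory applies up to $\partial\Omega$ and the conormal term must be carried through the entire De Giorgi iteration. The exponents $f\in L^{p/2}$, $g\in L^{p-1}$ with $p>N$ are exactly the thresholds that make this work: $p>N$ is the condition upgrading mere boundedness to H\"older continuity (the interior Calder\'on--Zygmund threshold $f\in L^{q}$, $q>N/2$, and its boundary analogue $g\in L^{s}$, $s>N-1$), and it is precisely this strict inequality that supplies the positive power of $\mathcal A_k$ on which the level-set iteration rests.
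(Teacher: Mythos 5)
The paper offers no proof of this lemma: it is imported verbatim as a known result from the reference \cite{N1} (Nittka), so there is no internal argument to compare yours against. Judged on its own terms, your two-step Stampacchia/De Giorgi outline is the standard route to such a statement and the exponent bookkeeping is correct: for the interior term one needs $\tfrac{p}{p-2}<2^*$, i.e.\ $p>\tfrac{4N}{N+2}$, and for the boundary term $\tfrac{p-1}{p-2}<\tfrac{2(N-1)}{N-2}$, i.e.\ $p>\tfrac{3N-2}{N}$, both of which follow from $p>N\ge2$, so each data term really does carry a strictly positive power of $\mathcal A_k$ and the level-set iteration closes; the absorption of $\|(u-k)^+\|_{L^2}$ for $k$ above a threshold comparable to $\|u\|_{L^2}$ is also the standard device and explains why the $\|u\|_{L^2}$ term appears on the right of the stated estimate. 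Two caveats, neither fatal for a quoted auxiliary lemma but worth flagging as the places where the real work lives: (i) the iteration must track the combined quantity $|A_k|+\sigma(\partial\Omega\cap A_k)$, which requires Chebyshev-type inequalities for both the volume and the surface measure simultaneously, and this coupling is asserted rather than carried out; (ii) the boundary oscillation decay on a merely Lipschitz boundary, via bi-Lipschitz flattening to a divergence-form operator with bounded measurable coefficients and a conormal condition, is the genuinely delicate step and is only indicated. Note also that \cite{N1} itself reaches the conclusion by a somewhat different, more functional-analytic path (reducing the Robin/Neumann problem to known up-to-the-boundary regularity results rather than rerunning the truncation iteration from scratch), so your sketch is a legitimate alternative rather than a reconstruction of the cited proof.
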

Since \(\Omega\) is bounded and \((u_1,u_2)\in C(\overline{\Omega})\times C(\overline{\Omega})\), it follows from elliptic estimate that \(v_1\in W^{2,p}(\Omega)\) for any \(p\). Since \(W^{2,p}(\Omega)\) is compactly embedded in \( C(\overline{\Omega})\) for \(p>\frac{N}{2}\), the mapping \((u_1,u_2)\mapsto v_1\) is compact. 
Next we assume that \(N\ge2\) and consider the following equation:
\begin{equation*}
\left\{
\begin{aligned}
& -\Delta v_2 = au_1 \in L^{\infty}(\Omega),  && x\in\Omega,\\
& \partial_{\nu}v_2 + \beta|v_2|^{\gamma-2}v_2 = 0, && x\in\partial\Omega.
\end{aligned}
\right.
\end{equation*}
Multiplying the equation by \(|v_2|^{r-2}v_2\) and applying integration by parts, we get
\begin{align}
\label{A}
(r-1)\int_{\Omega}|v_2|^{r-2}|\nabla v_2|^2 dx + \beta \int_{\partial\Omega} |v_2|^{r+\gamma -2}dS = a\int_{\Omega} u_1 |v_2|^{r-2}v_2 dx.
\end{align}
Noting that \((\|\nabla v_2\|_{L^2(\Omega)}^2 + \int_{\partial\Omega} v_2^2 dS)^{1/2}\) is equivalent to the usual \(H^1\)-norm by Poincar\'e-Friedrichs type inequality, we obtain
\begin{align*}
(l.h.s.) &= (r-1)\int_{\Omega} \left| |v_2|^{\frac{r-2}{2}} |\nabla v_2| \right|^2 dx + \beta \int_{\partial\Omega} |v_2|^{r+\gamma -2}dS\\[1.5mm]
&\ge \frac{4(r-1)}{r^2}\int_{\Omega} \left| \nabla|v_2|^{\frac{r}{2}} \right|^2 dx + \beta \int_{\partial\Omega} |v_2|^r dS - \beta|\partial\Omega|\\[1.5mm]
&\ge C_r \left( \int_{\Omega} \left| \nabla|v_2|^{\frac{r}{2}} \right|^2 dx + \int_{\partial\Omega} \left| |v_2|^{\frac{r}{2}} \right|^2 dS \right) -\beta|\partial\Omega|\\[1.5mm]
&\ge  C_r \int_{\Omega} \left| |v_2|^{\frac{r}{2}} \right|^2 dx -\beta|\partial\Omega| =C_r\|v_2\|_{L^r(\Omega)}^r - \beta|\partial\Omega|,
\end{align*}
where \(C_r=\min\{\frac{4(r-1)}{r^2},\beta\}>0\) and we used the estimate: 
\begin{align*}
\beta\int_{\partial\Omega}|v_2|^{r+\gamma-2}dS \ge \beta\int_{\{|v_2|\ge 1\}}|v_2|^{r+\gamma-2}dS &\ge \beta\int_{\{|v_2|\ge 1\}}|v_2|^{r}dS\\[1.5mm]
&= \beta\int_{\partial\Omega}|v_2|^r dS -\beta\int_{\{|v_2|\le 1\}}|v_2|^r dS\\[1.5mm]
&\ge\beta\int_{\partial\Omega}|v_2|^r dS -\beta|\partial\Omega|.
\end{align*}
Hence H\"older's inequality, Young's inequality and \(\eqref{A}\) yield
\begin{equation*}
\|v_2\|_{L^r(\Omega)}\le \left\{ \beta|\partial\Omega|\left( \frac{C_r}{2} \right)^{-1} + \frac{1}{r}\left( \frac{C_r}{2} \right)^{-r}\|au_1\|_{L^r(\Omega)}^r \right\}^{\frac{1}{r}}\hspace{9mm}\forall r<\infty.
\end{equation*}
Therefore by \(\eqref{A}\) we have 
\begin{equation*}
\int_{\partial\Omega} |v_2|^{r+\gamma-2} dS \le \frac{1}{\beta}\|au_1\|_{L^r(\Omega)} \left\{ \beta|\partial\Omega|\left( \frac{C_r}{2} \right)^{-1} + \frac{1}{r}\left( \frac{C_r}{2} \right)^{-r}\|au_1\|_{L^r(\Omega)}^r \right\}^{\frac{r-1}{r}}\hspace{3mm}\forall r<\infty.
\end{equation*}
Thus we see that \(v_2\in L^{r}(\partial\Omega)\) for all large \(r<\infty\) and we can apply Lemma \ref{regularity} to get \(v_2\in C^{0,\delta}(\overline{\Omega})\) for some \(\delta>0\). Note that \(C^{0,\delta}(\overline{\Omega})\hookrightarrow C(\overline{\Omega})\) is compact. 
As for the case where \(N=1\), \(\eqref{A}\) with \(r=2\) gives the a priori bound for \(\|v_2\|_{H^1(\Omega)}\).
Since the embedding \(H^1(\Omega)\hookrightarrow C(\overline{\Omega})\) is compact, the compactness of \(\varPsi\) is easily derived.
Thus we see that \(\varPsi:K\rightarrow K\) is compact.

In order to show the existence of positive stationary solutions for \(\eqref{SNR}\), it suffices to prove that \(\varPsi\) has a fixed point in \(K\). Therefore, to prove Theorem \ref{EoPSS} we are going to verify conditions (i) and (ii) of Lemma \ref{KFP}.

We first check condition (i).

\begin{lemma}
	\label{C1}
	Let \(r=\frac{b}{2}\), then \(u\neq\lambda\varPsi(u)\) for any \(\lambda\in(0,1)\) and \(u\in K\) satisfying \(\|u\|=r\). That is, condition {\rm (i)} of {\rm Lemma \ref{KFP}} with \(\varPhi=\varPsi\) holds.
\end{lemma}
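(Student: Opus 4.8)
The plan is a proof by contradiction. I would suppose there exist $\lambda\in(0,1)$ and $u=(u_1,u_2)^{\mathrm T}\in K$ with $\|u\|=r=b/2$ satisfying $u=\lambda\varPsi(u)$, set $v=(v_1,v_2)^{\mathrm T}=\varPsi(u)$, and exploit $v_i=u_i/\lambda$. Substituting $v_1=u_1/\lambda$ into the first line of \eqref{fixed} and into its boundary condition shows that $u_1$ itself solves
\begin{equation*}
-\Delta u_1+bu_1=\lambda u_1u_2 \quad\text{in }\Omega,\qquad \partial_\nu u_1+\alpha u_1=0 \quad\text{on }\partial\Omega ,
\end{equation*}
the factor $1/\lambda$ cancelling out of the linear part and of the boundary condition and leaving the $\lambda$ only in front of the quadratic term.

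The heart of the argument is then to integrate this scalar identity over $\Omega$. By the divergence theorem and the Robin-type boundary condition, $-\int_\Omega\Delta u_1\,dx=\alpha\int_{\partial\Omega}u_1\,dS\ge 0$, since $\alpha\ge 0$ and $u_1=\lambda v_1\ge 0$ (recall $\varPsi$ takes values in $K$). This yields $b\int_\Omega u_1\,dx\le\lambda\int_\Omega u_1u_2\,dx$. Next I would use that $\|u\|=r$ forces the pointwise bound $0\le u_2(x)\le\|u_2\|_{C(\overline\Omega)}\le\|u\|=b/2$ on $\overline\Omega$, so the right-hand side is at most $\lambda\,\tfrac b2\int_\Omega u_1\,dx$, which is strictly less than $\tfrac b2\int_\Omega u_1\,dx$ because $\lambda<1$. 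Hence $b\int_\Omega u_1\,dx<\tfrac b2\int_\Omega u_1\,dx$, which is possible only if $\int_\Omega u_1\,dx=0$; as $u_1\ge 0$ is continuous, $u_1\equiv 0$ on $\overline\Omega$. (Any $r<b$ would do here; the value $b/2$ is merely convenient.)

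Finally I would dispose of the degenerate possibility $u_1\equiv 0$: the second line of \eqref{fixed} then becomes $-\Delta v_2=au_1=0$ with $\partial_\nu v_2+\beta|v_2|^{\gamma-2}v_2=0$, and testing with $v_2$ and integrating by parts gives $\int_\Omega|\nabla v_2|^2\,dx+\beta\int_{\partial\Omega}|v_2|^\gamma\,dS=0$, whence $v_2\equiv 0$ and therefore $u_2=\lambda v_2\equiv 0$. Thus $u\equiv 0$, contradicting $\|u\|=r>0$, and condition (i) of Lemma \ref{KFP} with $\varPhi=\varPsi$ follows.

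I do not expect a serious obstacle: the only point needing a word of justification is the legitimacy of the integrations by parts, which is fine because elliptic regularity gives $v_1\in W^{2,p}(\Omega)$ for every $p$ (already used for the compactness of $\varPsi$) and \eqref{A} with $r=2$ gives $v_2\in H^1(\Omega)$. It is worth noting that this lemma uses neither the structural hypotheses (A)/(B) nor the dimension restriction $N\le 5$; those enter only in verifying condition (ii), where the genuine difficulty lies.
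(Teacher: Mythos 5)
Your proposal is correct and follows essentially the same route as the paper: derive the rescaled system satisfied by $u=\lambda\varPsi(u)$, use $\|u_2\|_{L^\infty}\le b/2<b$ to force $u_1\equiv0$, and then kill $u_2$ via the energy identity for the harmonic equation with the nonlinear boundary condition. The only (immaterial) difference is that you test the $u_1$-equation with the constant function $1$, which requires the nonnegativity of $u_1$ coming from $u\in K$, whereas the paper tests with $u_1$ itself and reads off $\|u_1\|_{L^2}=0$ directly; both are valid.
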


\begin{proof}
	We prove the statement by contradiction. Suppose that there exist \(\lambda\in(0,1)\) and  \(u\in K\) with \(\|u\|=r\) ~such that \(u=\lambda\varPsi(u)\), that is, \(u_1\) and \(u_2\) satisfy 
	\begin{equation}
		\label{hy1}
		\left\{
		\begin{aligned}
			& -\Delta u_1 + bu_1= \lambda u_1u_2, && x\in\Omega,\\
			& -\Delta u_2 = \lambda au_1,  && x\in\Omega,\\
			& \partial_{\nu}u_1 + \alpha u_1 = \partial_{\nu}u_2 + \beta\left|\frac{u_2}{\lambda}\right|^{\gamma-2}u_2 = 0, && x\in\partial\Omega.
		\end{aligned}
		\right.
	\end{equation}
	Multiplying the first equation of \(\eqref{hy1}\) by \(u_1\) and using integration by parts, we obtain 
	\begin{align*}
		\|\nabla u_1\|_{L^2(\Omega)}^2 + \alpha\int_{\partial\Omega}u_1^2dS + b\|u_1\|_{L^2(\Omega)}^2 &= \lambda\int_{\Omega}u_1^2u_2dx\\
		&\le\|u_2\|_{L^{\infty}(\Omega)}\|u_1\|_{L^2(\Omega)}^2\\
		&\le\frac{b}{2}\|u_1\|_{L^2(\Omega)}^2,
	\end{align*}
	where we use the fact 
	\begin{equation*}
		\|u_2\|_{L^{\infty}(\Omega)}\le \|u\|=r=\frac{b}{2}.
	\end{equation*}
	Then 
	\begin{equation*}
		\|\nabla u_1\|_{L^2(\Omega)}^2 + \alpha\int_{\partial\Omega}u_1^2dS + \frac{b}{2}\|u_1\|_{L^2(\Omega)}^2 \le 0.
	\end{equation*}
	Hence we have \(u_1=0\). By the second equation of \(\eqref{hy1}\), we see that \(u_2\) satisfies
	\begin{equation*}
		\left\{
		\begin{aligned}
			& -\Delta u_2 = 0,  && x\in\Omega,\\
			&  \partial_{\nu}u_2 + \beta\left|\frac{u_2}{\lambda}\right|^{\gamma-2}u_2 = 0, && x\in\partial\Omega.
		\end{aligned}
		\right.
	\end{equation*}
	Multiplying this equation by \(u_2\) and integration by parts, we obtain
	\begin{equation*}
		\|\nabla u_2\|_{L^2(\Omega)}^2 + \frac{\beta}{|\lambda|^{\gamma-2}}\int_{\partial\Omega} |u_2|^{\gamma}dS = 0,~~ \mbox{i.e.},~~~~
		\|\nabla u_2\|_{L^2(\Omega)}=0,~~~~u_2\left|\right._{\partial\Omega}=0.
	\end{equation*} 
	By the use of Poincar\'e's inequality, we also get \(u_2=0\). Thus \(u_1=u_2=0\). This contradicts the assumption \(\|u\|=\frac{b}{2}>0\).
\end{proof}

In order to verify condition (ii), we here claim the following lemma.

\begin{lemma}
	\label{C2}
	Let \(1\le N\le5\) and suppose that either
	{\rm (A)} or {\rm (B)} is satisfied {\rm :}
	\begin{equation*}
		\left\{
		\begin{aligned}
			& \mbox{\rm (A)}~~~~~\gamma=2,~~\alpha\le2\beta,\\
			& \mbox{\rm (B)}~~~~~\gamma>2.\\
		\end{aligned}
		\right.
	\end{equation*}
	Then there exists a constant \(R(>r=\frac{b}{2})\) such that for any \(\lambda>0\) and any solution \(u\) of \(u=\varPsi(u)+\lambda\varphi\), it holds that
	\begin{equation*}
		\|u\|<R.
	\end{equation*}
	
\end{lemma}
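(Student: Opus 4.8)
The plan is to argue by contradiction and reduce the statement to a uniform a priori bound in $C(\overline\Omega)$. If the claim failed there would be $\lambda_n>0$ and $u^n=(u_1^n,u_2^n)\in K$ with $u^n=\varPsi(u^n)+\lambda_n\varphi$ and $\|u^n\|\to\infty$. Writing $v^n=\varPsi(u^n)$, we have $u_2^n=v_2^n$ and $u_1^n=v_1^n+\lambda_n\varphi_1$; since $-\Delta\varphi_1+b\varphi_1=(\lambda_1+b)\varphi_1$ and $\varphi_1$ satisfies the same homogeneous Robin condition (with coefficient $\alpha$) as $u_1^n$, \eqref{fixed} shows that each $u^n\ge0$ solves
\begin{equation*}
\left\{
\begin{aligned}
& -\Delta u_1^n+bu_1^n=u_1^nu_2^n+\lambda_n(\lambda_1+b)\varphi_1, && x\in\Omega,\\
& -\Delta u_2^n=au_1^n, && x\in\Omega,\\
& \partial_\nu u_1^n+\alpha u_1^n=\partial_\nu u_2^n+\beta(u_2^n)^{\gamma-1}=0, && x\in\partial\Omega,
\end{aligned}
\right.
\end{equation*}
where $u_2^n\ge0$ was used. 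Hence it is enough to bound nonnegative solutions of this family (indexed by $\lambda_n\ge0$; the value $\lambda_n=0$, needed in Lemma~\ref{KFP}(ii), is included) uniformly in $C(\overline\Omega)$, and then to take $R$ larger than that bound and than $r=\frac{b}{2}$.

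First I would extract low-order information. Testing the first equation by $\varphi_1$ and integrating by parts --- the boundary contributions cancel, since $u_1^n$ and $\varphi_1$ obey the same Robin condition --- and using $\|\varphi_1\|_{L^2}=1$ gives
\[
(\lambda_1+b)\int_\Omega u_1^n\varphi_1\,dx=\int_\Omega u_1^nu_2^n\varphi_1\,dx+(\lambda_1+b)\lambda_n,
\]
so $\lambda_n\le\int_\Omega u_1^n\varphi_1\,dx$ and, by Lemma~\ref{phi1} (or because $\varphi_1$ is a positive constant when $\alpha=0$), $\int_\Omega u_1^nu_2^n\,dx\le C\int_\Omega u_1^n\,dx$. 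Testing the equations by $1$ gives in addition $\beta\int_{\partial\Omega}(u_2^n)^{\gamma-1}\,dS=a\int_\Omega u_1^n\,dx$ and $\alpha\int_{\partial\Omega}u_1^n\,dS+b\int_\Omega u_1^n\,dx=\int_\Omega u_1^nu_2^n\,dx+\lambda_n(\lambda_1+b)\int_\Omega\varphi_1\,dx$. Combining these relations --- and, in case (A) ($\gamma=2$), also testing the $u_2$-equation by $\varphi_1$ and by $u_1^n$ and balancing the boundary terms $\alpha\int_{\partial\Omega}u_1^nu_2^n$ against $\beta\int_{\partial\Omega}u_1^nu_2^n$, which is exactly where the hypothesis $\alpha\le2\beta$ enters (after one Young inequality the relevant combination acquires a favourable sign) --- I expect a uniform bound $\|u_1^n\|_{L^1(\Omega)}+\lambda_n\le C_0$, and hence a uniform bound on $\|u_2^n\|_{L^{\gamma-1}(\partial\Omega)}$.

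With such a base bound I would run a coupled bootstrap. If $u_1^n$ is bounded in $L^p(\Omega)$, then the $u_2^n$-equation has source $au_1^n\in L^p(\Omega)$ and Neumann datum $-\beta(u_2^n)^{\gamma-1}$, and the computation already carried out above for the compactness of $\varPsi$ bounds $u_2^n$ in $L^p(\Omega)$ and in $L^{p+\gamma-2}(\partial\Omega)$; elliptic regularity up to the boundary together with Sobolev embedding then raises the interior exponent of $u_2^n$. Feeding the improved summabilities of $u_1^n$ and $u_2^n$ into the first equation, whose right-hand side $u_1^nu_2^n+\lambda_n(\lambda_1+b)\varphi_1$ has $\lambda_n$ already controlled, and using the $W^{2,s}$ estimate for the Robin problem, raises the exponent of $u_1^n$. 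The hypothesis $N\le5$ is what guarantees that this alternation gains a fixed amount of summability at each step (and that the underlying energy estimates close), so that after finitely many steps one reaches the range $s>N/2$, where Lemma~\ref{regularity} --- respectively the embedding $W^{2,s}\hookrightarrow C(\overline\Omega)$ --- yields uniform $C(\overline\Omega)$-bounds for $u_1^n$ and $u_2^n$. This contradicts $\|u^n\|=\|u_1^n\|_{C(\overline\Omega)}+\|u_2^n\|_{C(\overline\Omega)}\to\infty$, and the lemma follows.

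The main difficulty is the base estimate: since \eqref{SNR} has no variational structure the source $u_1u_2$ cannot be absorbed into the dissipative terms, so an a priori $L^1$-bound must be squeezed out of the coupling together with the dissipative boundary conditions, the $\varphi_1$-weighted identity being the natural device. This is precisely what the dichotomy (A)/(B) encodes: for $\gamma>2$ the superlinear boundary term $\int_{\partial\Omega}(u_2^n)^{\gamma-1}u_1^n$ dominates after Young's inequality, so no relation between $\alpha$ and $\beta$ is needed, whereas for $\gamma=2$ the boundary dissipation of $u_2$ is only linear and the competition between the two boundary integrals is resolved only when $\alpha\le2\beta$. A secondary but essential point is bookkeeping: every constant produced in the bootstrap must depend only on the fixed data and on $C_0$, so that the final $C(\overline\Omega)$-bound is genuinely independent of $n$.
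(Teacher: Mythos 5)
Your reduction to a uniform a priori bound, and the final bootstrap via elliptic regularity, are broadly in line with the paper. The genuine gap is in the base estimate, which you yourself flag as ``the main difficulty'' and then only say you ``expect'' to close. The identities you propose are all \emph{linear} in the unknown quantities: testing the first equation by \(\varphi_1\) gives the single relation \((\lambda_1+b)\int_\Omega u_1\varphi_1\,dx=\int_\Omega u_1u_2\varphi_1\,dx+(\lambda_1+b)\lambda\), which relates three nonnegative unknowns without bounding any of them, and testing by \(1\) gives \(b\int_\Omega u_1\,dx+\alpha\int_{\partial\Omega}u_1\,dS=\int_\Omega u_1u_2\,dx+\lambda(\lambda_1+b)\|\varphi_1\|_{L^1}\), where the inequality \(\int u_1u_2\,dx\le C\int u_1\,dx\) you extract has a constant \(C=(\lambda_1+b)\|\varphi_1\|_{L^\infty}/C_\alpha>b\), so nothing closes. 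Testing the \(u_2\)-equation by \(u_1\) introduces the sign-indefinite term \(\int_\Omega\nabla u_1\cdot\nabla u_2\,dx\), and there is no boundary term \(\int_{\partial\Omega}u_1u_2\,dS\) available to ``balance'' as you suggest (the boundary integrals of the two equations involve \(u_1\) and \(u_2\) separately). No combination of these first-order moments produces the coercive quadratic quantity needed to dominate the linear ones.

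The paper's key device, which is absent from your proposal, is to eliminate \(u_1=-\frac1a\Delta u_2\) and rewrite the system as the fourth-order scalar equation \(\Delta^2u_2-b\Delta u_2=-u_2\Delta u_2+\lambda a(b+\lambda_1)\varphi_1\) with the induced boundary conditions, and then to test \emph{this} equation with \(\varphi_1\). The resulting identity \eqref{byparts} places the genuinely quadratic terms \(\int_\Omega|\nabla u_2|^2\varphi_1\,dx+\frac{\lambda_1}{2}\int_\Omega u_2^2\varphi_1\,dx\) on the favourable side against the linear term \(\lambda_1(b+\lambda_1)\int_\Omega u_2\varphi_1\,dx\), which Young's inequality then absorbs; this is what simultaneously bounds \(\lambda\), \(\|u_2\|_{H^1}\), and the boundary integrals. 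The dichotomy (A)/(B) enters here too, but not in the way you describe: it guarantees that the boundary polynomial \(\beta u_2^{\gamma}-\beta(b+\lambda_1)u_2^{\gamma-1}-\frac{\alpha}{2}u_2^2+\alpha(b+\lambda_1)u_2\) is bounded below on \([0,\infty)\) (for \(\gamma=2\) the leading coefficient is \(\beta-\frac{\alpha}{2}\), whence \(\alpha\le2\beta\)). Finally, your bootstrap also skips the intermediate \(H^1\)-bound for \(u_1\), obtained by testing the first equation with \(u_1\) and interpolating \(\int u_1^2u_2\le(\int u_1u_2)^{\theta}(\int u_1^{\frac{2-\theta}{1-\theta}}u_2)^{1-\theta}\) with \(\theta=\frac{6-N}{4}\); the restriction \(N\le5\) is used precisely there, through \(\frac{N+2}{4}<2\), rather than in the subsequent \(L^p\)-iteration.
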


\begin{proof}
	We rewrite \(u=\varPsi(u)+\lambda\varphi\) in terms of each component:
	\begin{equation}
		\label{hy2}
		\left\{
		\begin{aligned}
			& -\Delta u_1 + bu_1= u_1u_2 + \lambda(b+\lambda_1)\varphi_1, && x\in\Omega,\\
			& -\Delta u_2 = au_1,  && x\in\Omega,\\
			& \partial_{\nu}u_1 + \alpha u_1 = \partial_{\nu}u_2 + \beta|u_2|^{\gamma-2}u_2 = 0, && x\in\partial\Omega.
		\end{aligned}
		\right.
	\end{equation}
	In what follows, we denote by \(C\) a general constant which differs from place to place.
	First, we derive \(H^1\)-estimate for \(u_2\). Replacing \(u_1\) in the first equation of \(\eqref{hy2}\) by \(-\frac{1}{a}\Delta u_2\), we get
	\begin{equation}
		\label{u2}
		\left\{
		\begin{aligned}
			&\Delta^2u_2-b\Delta u_2=-u_2\Delta u_2+\lambda a(b+\lambda_1)\varphi_1,&&x\in\Omega\\
			&\partial_\nu u_2 + \beta|u_2|^{\gamma-2}u_2 = \partial_\nu \Delta u_2 + \alpha \Delta u_2 = 0,&&x\in\partial\Omega.
		\end{aligned}
		\right.
	\end{equation}
	Multiplying \(\eqref{u2}\) by \(\varphi_1\), using integration by parts and noting that the boundary conditions \(\partial_\nu\varphi_1 + \alpha\varphi_1 = \partial_\nu u_2 + \beta|u_2|^{\gamma-2}u_2 =0\), we have
	\begin{align*}
		(l.h.s)&=\int_{\Omega}\Delta^2u_2\varphi_1dx-b\int_{\Omega}\Delta u_2\varphi_1dx\\[1.5mm]
		&=-\int_{\Omega}\nabla(\Delta u_2)\cdot\nabla\varphi_1dx + \int_{\partial\Omega}(\partial_\nu\Delta u_2)\varphi_1dS\\[1.5mm]
		&\phantom{=\int} + b\int_{\Omega}\nabla u_2\cdot\nabla\varphi_1dx -b\int_{\partial\Omega}(\partial_\nu u_2)\varphi_1dS\\[1.5mm]
		&=\int_{\Omega}\Delta u_2 \Delta \varphi_1dx - \int_{\partial\Omega} \Delta u_2 (\partial_\nu\varphi_1)dS+ \int_{\partial\Omega}(\partial_\nu\Delta u_2)\varphi_1dS\\[1.5mm] 
		&\phantom{=\int}-b\int_{\Omega}u_2\Delta\varphi_1dx +b\int_{\partial\Omega}u_2(\partial_\nu\varphi_1)dS -b\int_{\partial\Omega}(\partial_\nu u_2)\varphi_1dS\\[1.5mm]
		&=-\lambda_1\int_{\Omega}\Delta u_2\varphi_1dx + \alpha\int_{\partial\Omega} \Delta u_2 \varphi_1dS - \alpha\int_{\partial\Omega} \Delta u_2 \varphi_1dS \\[1.5mm]
		&\phantom{=\int}+b\lambda_1\int_{\Omega}u_2\varphi_1dx -\alpha b\int_{\partial\Omega}u_2\varphi_1dS + \beta b\int_{\partial\Omega}u_2^{\gamma-1}\varphi_1dS \\[1.5mm]
		&=-\lambda_1 \int_{\Omega} u_2\Delta\varphi_1dx +\lambda_1\int_{\partial\Omega}u_2(\partial_\nu\varphi_1)dS - \lambda_1 \int_{\partial\Omega} (\partial_\nu u_2)\varphi_1dS \\[1.5mm]
		&\phantom{=\int}+b\lambda_1\int_{\Omega}u_2\varphi_1dx -\alpha b\int_{\partial\Omega}u_2\varphi_1dS + \beta b\int_{\partial\Omega}u_2^{\gamma-1}\varphi_1dS \\[1.5mm]
		&=\lambda_1(b+\lambda_1) \int_{\Omega}u_2\varphi_1dx + \beta(b+\lambda_1)\int_{\partial\Omega}u_2^{\gamma-1}\varphi_1dS - \alpha(b+\lambda_1)\int_{\partial\Omega}u_2\varphi_1dS,
	\end{align*}
	and
	\begin{align*}
		(r.h.s)&=-\int_{\Omega}u_2\Delta u_2\varphi_1dx +\lambda a(b+\lambda_1)\|\varphi_1\|_{L^2(\Omega)}^2\\[1.5mm]
		&=\int_{\Omega}\nabla u_2\cdot\nabla(u_2\varphi_1)dx - \int_{\partial\Omega}(\partial_\nu u_2)u_2\varphi_1dS +\lambda a(b+\lambda_1)\\[1.5mm]
		&=\int_{\Omega}|\nabla u_2|^2\varphi_1dx + \int_{\Omega} u_2 \nabla u_2\cdot\nabla\varphi_1dx + \beta\int_{\partial\Omega}u_2^{\gamma}\varphi_1dS +\lambda a(b+\lambda_1)\\[1.5mm]
		&=\int_{\Omega}|\nabla u_2|^2\varphi_1dx + \frac{1}{2}\int_{\Omega}\nabla u_2^2\cdot\nabla\varphi_1dx + \beta\int_{\partial\Omega}u_2^{\gamma}\varphi_1dS +\lambda a(b+\lambda_1)\\[1.5mm]
		&=\int_{\Omega}|\nabla u_2|^2\varphi_1dx - \frac{1}{2}\int_{\Omega}u_2^2\Delta\varphi_1dx + \frac{1}{2}\int_{\partial\Omega}u_2^2(\partial_\nu\varphi_1)dS + \beta\int_{\partial\Omega}u_2^{\gamma}\varphi_1dS +\lambda a(b+\lambda_1)\\[1.5mm]
		&=\int_{\Omega}|\nabla u_2|^2\varphi_1dx +\frac{\lambda_1}{2}\int_{\Omega}u_2^2\varphi_1dx + \beta\int_{\partial\Omega}u_2^{\gamma}\varphi_1dS -\frac{\alpha}{2}\int_{\partial\Omega}u_2^2\varphi_1dS +\lambda a(b+\lambda_1).
	\end{align*}
	Therefore the following equality holds.
	\begin{align}
		\label{byparts}
		\lambda_1( b + \lambda_1 )\int_{\Omega }u_2\varphi_1 dx &= \int_{\Omega }|\nabla u_2|^2\varphi_1dx + \frac{\lambda_1}{2}\int_{\Omega }u_2^2\varphi_1 dx + a(b+\lambda_1)\lambda\\
		&~~~~~+\int_{\partial\Omega} \left\lbrace \beta u_2^{\gamma} -\beta\left( b+\lambda_1 \right)u_2^{\gamma-1} -\frac{\alpha}{2}u_2^2 + \alpha\left( b+\lambda_1 \right)u_2 \right\rbrace \varphi_1 dS.\nonumber
	\end{align}
	Since (A) : \(\gamma=2\), \(\alpha\le2\beta\) or (B) : \(\gamma>2\) holds, we get
	\begin{equation*}
		\inf_{u_2\ge0}\left\lbrace \beta u_2^{\gamma} -\beta\left( b+\lambda_1 \right)u_2^{\gamma-1} -\frac{\alpha}{2}u_2^2 + \alpha\left( b+\lambda_1 \right)u_2 \right\rbrace \ge -C > -\infty.
	\end{equation*}
	Moreover, we see that due to the boundedness of \(\varphi_1\) (cf. Lemma \ref{phi1})
	\begin{equation*}
		\lambda_1( b + \lambda_1 )\int_{\Omega }u_2\varphi_1 dx \ge \int_{\Omega }|\nabla u_2|^2\varphi_1dx + \frac{\lambda_1}{2}\int_{\Omega }u_2^2\varphi_1 dx + a(b+\lambda_1)\lambda -C.
	\end{equation*}
	By Schwarz's inequality and Young's inequality, it is easy to see that
	\begin{align*}
		\int_{\Omega }|\nabla u_2|^2\varphi_1dx + \frac{\lambda_1}{2}\int_{\Omega }u_2^2\varphi_1 dx + a(b+\lambda_1)\lambda &\le \lambda_1( b + \lambda_1 )\int_{\Omega }u_2\varphi_1 dx + C\\[1.5mm]
		&\le \lambda_1(b+\lambda_1) \left(\int_{\Omega}u_2^2\varphi_1dx\right)^{\frac{1}{2}} \|\varphi_1\|_{L^1(\Omega)}^{\frac{1}{2}} +C\\[1.5mm]
		&\le \frac{\lambda_1}{4}\int_{\Omega}u_2^2\varphi_1dx +C.
	\end{align*}
	Hence we obtain
	\begin{align}
		\label{esti1}
		\int_{\Omega}|\nabla u_2|^2\varphi_1dx\le C,~~~~~\int_{\Omega}u_2^2\varphi_1dx\le C,~~~~~\lambda\le C,
	\end{align}
	and 
	\begin{align}
		\label{esti2}
		\int_{\Omega}u_2\varphi_1dx\le\left(\int_{\Omega}u_2^2\varphi_1dx\right)^{\frac{1}{2}}\left(\int_{\Omega}\varphi_1dx\right)^{\frac{1}{2}}\le C.
	\end{align}
	Furthermore it follows from Lemma \ref{phi1} and \(\eqref{esti1}\)
	\begin{equation*}
		C_{\alpha} \left( \int_{\Omega}|\nabla u_2|^2 dx + \int_{\Omega} u_2^2 dx \right) \le \int_{\Omega} |\nabla u_2|^2 \varphi_1 dx + \int_{\Omega} u_2^2\varphi_1 dx \le C,
	\end{equation*}
	whence follows
	\begin{equation}
		\label{esti3}
		\|u_2\|_{H^1(\Omega)}\le C.
	\end{equation}
	By \(\eqref{esti2}\) and \(\eqref{byparts}\), we also have
	\begin{equation}
		\label{boundary}
		\int_{\partial\Omega} \left\lbrace \beta u_2^{\gamma} -\beta\left( b+\lambda_1 \right)u_2^{\gamma-1} -\frac{\alpha}{2}u_2^2 + \alpha\left( b+\lambda_1 \right)u_2 \right\rbrace \varphi_1 dS\le C.
	\end{equation}
	Hence we can obtain
	\begin{equation}
		\label{esti4}
		\begin{cases}
			\displaystyle\int_{\partial\Omega}u_2^{\gamma}\varphi_1dS\le C\hspace{10mm}(\gamma>2~~\mbox{or}~~\gamma=2,~\alpha<2\beta),\\[3mm]
			\displaystyle\int_{\partial\Omega}u_2\varphi_1dS\le C \hspace{10mm}(\gamma=2,~\alpha=2\beta).
		\end{cases}
	\end{equation}
	Indeed, if \(\gamma>2\), then by H\"older's inequality and Young's inequality, we get
	\begin{align*}
		\beta\int_{\partial\Omega} u_2^{\gamma}\varphi_1dS + \alpha(b+\lambda_1)\int_{\partial\Omega}u_2\varphi_1dS &\le C + \beta(b+\lambda_1)\int_{\partial\Omega}u_2^{\gamma-1}\varphi_1dS + \frac{\alpha}{2}\int_{\partial\Omega}u_2^2\varphi_1dS\\[1.5mm]
		&\le C + \beta(b+\lambda_1)\left( \int_{\partial\Omega} u_2^{\gamma}\varphi_1dS \right)^{\frac{\gamma-1}{\gamma}}\left( \int_{\partial\Omega}\varphi_1 dS \right)^{\frac{1}{\gamma}}\\[1.5mm]
		&\phantom{=\int}+ \frac{\alpha}{2} \left( \int_{\partial\Omega} u_2^{\gamma}\varphi_1dS \right)^{\frac{2}{\gamma}} \left( \int_{\partial\Omega}\varphi_1 dS \right)^{\frac{\gamma-2}{\gamma}}\\[1.5mm]
		&\le C +  \beta(b+\lambda_1)\|\varphi_1\|_{L^{\infty}(\Omega)}^{\frac{1}{\gamma}} |\partial\Omega|^{\frac{1}{\gamma}} \left( \int_{\partial\Omega} u_2^{\gamma}\varphi_1dS \right)^{\frac{\gamma-1}{\gamma}}\\[1.5mm] 
		&\phantom{=\int}+\frac{\alpha}{2}\|\varphi_1\|_{L^{\infty}(\Omega)}^{\frac{\gamma-2}{\gamma}} |\partial\Omega|^{\frac{\gamma-2}{\gamma}} \left( \int_{\partial\Omega} u_2^{\gamma}\varphi_1dS \right)^{\frac{2}{\gamma}}\\[1.5mm]
		&\le C + \frac{\beta}{2} \int_{\partial\Omega} u_2^{\gamma}\varphi_1dS,
	\end{align*}
	where we denote by \(|\partial\Omega|\) a volume of \(\partial\Omega\) and use the following property (see \(\cite{HO1}\)):
	\begin{equation*}
		\|\varphi_1\|_{L^{\infty}(\partial\Omega)}\le \|\varphi_1\|_{L^{\infty}(\Omega)}.
	\end{equation*}
	On the other hand, if \(\gamma=2\) and \(\alpha<2\beta\), then it follows from Schwarz's inequality and Young's inequality
	\begin{align*}
		&\left( \beta -\frac{\alpha}{2} \right) \int_{\partial\Omega} u_2^2\varphi_1dS + \alpha(b+\lambda_1)\int_{\partial\Omega}u_2\varphi_1dS\\[1.5mm]
		\le&~ C+ \beta(b+\lambda_1)\int_{\partial\Omega}u_2\varphi_1dS\\[1.5mm]
		\le&~ C+ \beta(b+\lambda_1) \left( \int_{\partial\Omega}u_2^2\varphi_1dS \right)^\frac{1}{2} \left( \int_{\partial\Omega} \varphi_1dS \right)^{\frac{1}{2}}\\[1.5mm]
		\le&~ C+ \beta(b+\lambda_1)\|\varphi_1\|_{L^{\infty}(\Omega)} |\partial\Omega|^{\frac{1}{2}} \left( \int_{\partial\Omega}u_2^2\varphi_1dS \right)^\frac{1}{2}\\[1.5mm]
		\le&~ C+ \frac{1}{2} \left( \beta -\frac{\alpha}{2} \right) \int_{\partial\Omega}u_2^2\varphi_1dS.
	\end{align*}
	For the case of \(\gamma=2\) and \(\alpha=2\beta\), from \(\eqref{boundary}\) it is clear that 
	\begin{equation*}
		\beta\int_{\partial\Omega}u_2\varphi_1dS\le C.
	\end{equation*}
	Thus we obtain \(\eqref{esti4}\).\\
	
	Now, we derive \(H^1\)-estimate for \(u_1\). Multiplying the first equation of \(\eqref{hy2}\) by \(\varphi_1\) and using integration by parts, we get 
	\begin{align}
		\label{3.10}
		(\lambda_1+b)\int_{\Omega}u_1\varphi_1dx=\int_{\Omega}u_1u_2\varphi_1dx+\lambda(\lambda_1+b)
	\end{align}
	Similarly, multiplying the second equation of \(\eqref{hy2}\) by \(\varphi_1\), we get
	\begin{equation}
		\label{3.11}
		\lambda_1\int_{\Omega}u_2\varphi_1dx+\beta\int_{\partial\Omega}u_2^{\gamma-1}\varphi_1dS-\alpha\int_{\partial\Omega}u_2\varphi_1dS=a\int_{\Omega}u_1\varphi_1dx.
	\end{equation} 
	Then by \(\eqref{3.10}\), \(\eqref{3.11}\), \(\eqref{esti3}\) and \(\eqref{esti4}\), we obtain
	\begin{equation}
		\label{3.12}
		\int_{\Omega}u_1\varphi_1dx\le C, ~~~~~\int_{\Omega}u_1u_2\varphi_1dx\le C.
	\end{equation}
	Hence, by Lemma \ref{phi1}, we get a priori bounds for \(\int_{\Omega}u_1dx\) and \(\int_{\Omega}u_1u_2dx\).
	Now we are going to establish a priori bound of \(u_1\) in \(H^1(\Omega)\) for the case of \(N\in[3,5]\). Multiplying the first equation of \(\eqref{hy2}\) by \(u_1\) and using integration by parts, we obtain
	\begin{align}
		\|\nabla u_1\|_{L^2(\Omega)}^2 + \alpha\int_{\partial\Omega}u_1^2 ds + b\|u_1\|_{L^2(\Omega)}^2\nonumber
		&= \int_{\Omega}u_1^2u_2dx + \lambda(b+\lambda_1)\int_{\Omega}u_1\varphi_1dx\\[1.5mm]\nonumber
		&\le \int_{\Omega}\left( u_1u_2 \right)^{\theta} \left( u_1^{\frac{2-\theta}{1-\theta}}u_2 \right)^{1-\theta} dx +C\\[1.5mm]
		&\le \left( \int_{\Omega}u_1u_2 dx\right) ^\theta \left( \int_{\Omega}u_1^{\frac{2-\theta}{1-\theta}}u_2 dx\right) ^{1-\theta} + C,\label{ene1}
	\end{align}
	where we apply H\"older's inequality with exponent \((\frac{1}{\theta},\frac{1}{1-\theta})\) for the first term on the right hand side. Here we take \(\theta=\frac{6-N}{4}\in(0,1)\), then by applying H\"older's inequality with exponent \((\frac{2N}{N+2},\frac{2N}{N-2})\),
	\begin{equation*}
		\left( \int_{\Omega}u_1^{\frac{2-\theta}{1-\theta}}u_2 dx\right) ^{1-\theta} = \left( \int_{\Omega}u_1^{\frac{N+2}{N-2}}u_2 dx\right) ^{\frac{N-2}{4}} \le \|u_1\|_{L^{2^*}(\Omega)}^{\frac{N+2}{4}} \|u_2\|_{L^{2^*}(\Omega)}^{\frac{N-2}{4}}.
	\end{equation*}
	where \(2^*=\frac{2N}{N-2}\) is the critical Sobolev exponent. Using Sobolev's embedding \(H^1(\Omega)\hookrightarrow L^{2^*}(\Omega)\) and \(\eqref{esti3}\), we obtain
	\begin{equation*}
		\|u_1\|_{L^{2^*}(\Omega)}^{\frac{N+2}{4}} \|u_2\|_{L^{2^*}(\Omega)}^{\frac{N-2}{4}} \le  C\|u_1\|_{H^1(\Omega)}^{\frac{N+2}{4}}.
	\end{equation*}
	Since \( (\|\nabla u_1\|_{L^2(\Omega)}^2 + \alpha\int_{\partial\Omega}u_1^2 ds + b\|u_1\|_{L^2(\Omega)}^2)^{1/2} \) is equivalent to the usual \(H^1\)-norm of \(u_1\) due to trace inequality and Poincar\'e-Friedrichs type inequality, as a consequence we have
	\begin{equation*}
		\|u_1\|_{H^1(\Omega)}^2\le C\|u_1\|_{H^1(\Omega)}^{\frac{N+2}{4}} +C.
	\end{equation*}
	Since \(N\in[3,5]\), we have \(\frac{N+2}{4}<2\). Hence it follows from Young's inequality 
	\begin{equation*}
		\|u_1\|_{H^1(\Omega)}^2\le C\|u_1\|_{H^1(\Omega)}^{\frac{N+2}{4}} +C \le \frac{1}{2}\|u_1\|_{H^1(\Omega)}^2 +C.
	\end{equation*}
	Thus we derive
	\begin{equation}
		\label{3.13}
		\|u_1\|_{H^1(\Omega)}\le C.
	\end{equation}
	
	Next, we derive \(L^{\infty}\)-estimates for \(u_1\) as for the case \(N\in[3,5]\). 
	From Sobolev's embedding \(H^1(\Omega)\hookrightarrow L^{\frac{10}{3}}(\Omega)\), we can see that \(u_1\), \(u_2\in L^\frac{10}{3}(\Omega)\) and \(u_1u_2\in L^\frac{5}{3}(\Omega)\). We get \(u_1\in W^{2,\frac{5}{3}}(\Omega)\) by the elliptic estimate for the first equation of \(\eqref{hy2}\). Moreover, \(u_1\in L^5(\Omega)\) by Sobolev's embedding \(W^{2,\frac{5}{3}}(\Omega)\hookrightarrow L^5(\Omega)\). Then by H\"older's inequality,
	\begin{equation*}
	\int_{\Omega} u_1^2u_2^2dx \le \left( \int_{\Omega} u_1^{2\cdot\frac{5}{2}} dx \right)^{\frac{2}{5}} \left( \int_{\Omega} u_2^{2\cdot\frac{5}{3}} \right)^{\frac{3}{5}},
	\end{equation*}
	we can see that \(u_1u_2\in L^2(\Omega)\). By the same reason as before, we know that \(u_1\in W^{2,2}(\Omega)\hookrightarrow L^{10}(\Omega)\). By H\"older's inequality, we have \(u_1u_2\in L^{\frac{5}{2}}(\Omega)\). Hence applying elliptic estimate and Sobolev's embedding again, we get \(u_1\in W^{2,\frac{5}{2}}(\Omega)\hookrightarrow L^q(\Omega)\) for any \(q\in[1,\infty)\). Therefore \(u_1u_2\in L^{\frac{10q}{3q+10}}(\Omega)\) and \(u_1\in W^{2,\frac{10q}{3q+10}}(\Omega)\). Choosing \(q>10\), we have
	\begin{equation*}
	\|u_1\|_{L^{\infty}(\Omega)} \le C_1,
	\end{equation*}
	where we use the Sobolev's embedding \(W^{2,\frac{10q}{3q+10}}(\Omega)\hookrightarrow L^{\infty}(\Omega)\) for \(q>10\).
	
	Thus we obtain \(L^{\infty}\)-estimate of \(u_1\) for the case of \(N\in[3,5]\). About the regularity for \(u_2\), it suffices to consider the following problem for given \(u_1\in L^{\infty}(\Omega)\):
	\begin{equation*}
	\left\{
	\begin{aligned}
	& -\Delta u_2 = au_1 \in L^{\infty}(\Omega),  && x\in\Omega,\\
	& \partial_{\nu}u_2 + \beta|u_2|^{\gamma-2}u_2 = 0, && x\in\partial\Omega.
	\end{aligned}
	\right.
	\end{equation*}
	Therefore we can derive \(L^{\infty}\)-estimate for \(u_2\), i.e.,
	\begin{equation*}
	\|u_2\|_{L^{\infty}(\Omega)}\le C_2
	\end{equation*}
	by the same arguments as for the compactness of \(\varPsi\) applying Lemma \ref{regularity}. Choosing \(R>C_1+C_2\), we can see that the conclusion of this lemma holds.\\
	
	As for the case \(N=1,2\), it suffices to obtain \(L^{\infty}\)-estimate for each component. First, let \(N=2\). Choosing \(\theta=\frac{1}{2}\) in \(\eqref{ene1}\), we see that it follows from Sobolev's embedding \(H^1(\Omega)\hookrightarrow L^p(\Omega)\) ( for all \(p\in[1,\infty)\) )
	\begin{align*}
		\|\nabla u_1\|_{L^2(\Omega)}^2 + \alpha\int_{\partial\Omega}u_1^2 ds + b\|u_1\|_{L^2(\Omega)}^2\nonumber
		&= \int_{\Omega}u_1^2u_2dx + \lambda(b+\lambda_1)\int_{\Omega}u_1\varphi_1dx\\[1.5mm]\nonumber
		&\le \int_{\Omega}\left( u_1u_2 \right)^{\frac{1}{2}} \left( u_1^{3}u_2 \right)^{\frac{1}{2}} dx +C\\[1.5mm]
		&\le \left( \int_{\Omega}u_1u_2 dx\right) ^{\frac{1}{2}} \left( \int_{\Omega}u_1^{3}u_2 dx\right) ^{\frac{1}{2}} + C\\[1.5mm]
		&\le C \left( \int_{\Omega} u_1^3u_2 dx \right)^{\frac{1}{2}} +C\\[1.5mm]
		&\le C \|u_1\|_{L^6(\Omega)}^{\frac{3}{2}}\|u_2\|_{L^2(\Omega)}^\frac{1}{2} +C\\[1.5mm]
		&\le C \|u_1\|_{H^1(\Omega)}^{\frac{3}{2}} + C.
	\end{align*}
	Here we note that we have already had \(H^1\)-estimate for \(u_2\) without restrictions on the space dimension. Thus we also get \(H^1\)-estimate for \(u_1\). In the similar way as for the previous case \(N\in[3,5]\), we can derive \(L^{\infty}\)-estimates for \(u_1\) and \(u_2\).
	
	Let \(N=1\) and \(\Omega=(a_0,b_0)\) with \(a_0<b_0\). Since \(u_1\in C(\overline{\Omega})\), there exists \(x_0\in\overline{\Omega}\) such that 
	\begin{equation*}
		u_1(x_0)=\min_{x\in\overline{\Omega}}u_1(x).
	\end{equation*}
	Furthermore, since it holds that \(\|u_1\|_{L^1(\Omega)}\le C\) for any space dimension, we have
	\begin{equation*}
		\min_{x\in\overline{\Omega}}u_1(x)\le\frac{1}{|\Omega|}\int_{\Omega}u_1dx\le C.
	\end{equation*}
	Here by the fundamental theorem of calculus,
	\begin{equation*}
		u_1(x)=u_1(x_0)+\int_{x_0}^{x}u'_1(\xi) d\xi .
	\end{equation*}
	Therefore we get the following inequality:
	\begin{equation}
		\label{1dim}
		\|u_1\|_{L^{\infty}(\Omega)}\le \int_{a_0}^{b_0}|u'_1(\xi)|d\xi + |u_1(x_0)| \le \|u'_1\|_{L^1(\Omega)} + C.
	\end{equation}
	From \(\eqref{1dim}\), Schwarz's inequality and Young's inequality, we see that
	\begin{align*}
		\|u'_1\|_{L^2}^2 + \alpha\int_{\partial\Omega}u_1^2ds + b\|u_1\|_{L^2}^2 &= \int_{\Omega}u_1^2u_2dx +\lambda(b+\lambda_1)\int_{\Omega}u_1\varphi_1 dx\\[1.5mm]
		&\le \|u_1\|_{L^{\infty}} \int_{\Omega}u_1u_2dx +C\\[1.5mm]
		&\le C\left(  \|u'_1\|_{L^1} + C \right) +C \\[1.5mm]
		&\le C\|u'_1\|_{L^2} + C
		\le \frac{1}{2}\|u'_1\|_{L^2}^2 + C.
	\end{align*}
	Hence we obtain a priori bound for \(\|u_1\|_{H^1(\Omega)}\). Since Sobolev's embedding \(H^1(\Omega)\hookrightarrow L^{\infty}(\Omega)\) holds for \(N=1\), we obtain the desired estimates.
\end{proof}

\begin{proof}[Proof of Theorem \ref{EoPSS}]
	By applying Lemma \ref{C1}, Lemma \ref{C2} and Lemma \ref{KFP}, we can verify that Theorem \ref{EoPSS} holds.
\end{proof}

\subsection{Ordered Uniqueness}
\hspace{\parindent}Next, we discuss the ordered uniqueness of the positive solutions for \(\eqref{SNR}\). We now prepare the following inequality.

\begin{lemma}
	\label{power}
	{\upshape (\(\cite{D}\))}
	For any \(\gamma\in[2,\infty)\), there exists \(C_{\gamma}>0\) such that 
	\begin{equation*}
		\left( x-y \right) \cdot \left( |x|^{\gamma-2}x - |y|^{\gamma-2}y \right)\ge C_{\gamma}|x-y|^{\gamma}
	\end{equation*}
	for all \(x\), \(y\in\mathbb{R}^N\).
\end{lemma}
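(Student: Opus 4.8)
The plan is to prove this classical monotonicity inequality for the vector field $F(x):=|x|^{\gamma-2}x$ by reducing it to a one-dimensional estimate along the segment joining $x$ and $y$. First I would dispose of the trivial case $\gamma=2$, in which $F(x)=x$ and the inequality holds with $C_2=1$. So assume $\gamma>2$; then $F\in C^1(\mathbb{R}^N)$, with Jacobian matrix
\[
DF(z)=|z|^{\gamma-2}I+(\gamma-2)|z|^{\gamma-4}\,z\otimes z\quad(z\ne0),\qquad DF(0)=0,
\]
and for every $\xi\in\mathbb{R}^N$ one has $DF(z)\xi\cdot\xi=|z|^{\gamma-2}|\xi|^2+(\gamma-2)|z|^{\gamma-4}(z\cdot\xi)^2\ge|z|^{\gamma-2}|\xi|^2$ since $\gamma\ge2$.

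Next I would write, by the fundamental theorem of calculus along the segment $z_t:=y+t(x-y)$, $t\in[0,1]$,
\[
F(x)-F(y)=\int_0^1 DF(z_t)\,(x-y)\,dt,
\]
so that, pairing with $x-y$ and using the quadratic-form bound above inside the integral,
\[
(x-y)\cdot\bigl(F(x)-F(y)\bigr)\ \ge\ |x-y|^2\int_0^1|z_t|^{\gamma-2}\,dt .
\]
Thus everything reduces to the scalar estimate $\int_0^1|y+t(x-y)|^{\gamma-2}\,dt\ge C_\gamma|x-y|^{\gamma-2}$, which then yields the lemma with the same constant.

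To prove this scalar estimate I would argue as follows. If $x=y$ both sides vanish; otherwise set $h:=x-y\ne0$, $\omega:=h/|h|$ and $a:=y/|h|$, so that $|y+th|^{\gamma-2}=|h|^{\gamma-2}|a+t\omega|^{\gamma-2}$ and it suffices to bound $\int_0^1|a+t\omega|^{\gamma-2}\,dt$ from below by a positive constant independent of $a$ and of the unit vector $\omega$. Completing the square in $t$ gives $|a+t\omega|^2=\bigl|a-(a\cdot\omega)\omega\bigr|^2+(t-t^\ast)^2\ge(t-t^\ast)^2$ with $t^\ast:=-a\cdot\omega$, hence $|a+t\omega|^{\gamma-2}\ge|t-t^\ast|^{\gamma-2}$ and $\int_0^1|a+t\omega|^{\gamma-2}\,dt\ge\int_0^1|t-s|^{\gamma-2}\,dt$ with $s=t^\ast$. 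A one-variable computation, splitting according to whether $s\le1/2$ or $s\ge1/2$ and integrating over the far half-interval, shows $\int_0^1|t-s|^{\gamma-2}\,dt\ge\frac{1}{(\gamma-1)2^{\gamma-1}}=:C_\gamma>0$ for every $s\in\mathbb{R}$, which completes the proof.

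The only delicate point is this passage to the scalar estimate: the segment $z_t$ may come arbitrarily close to, or even pass through, the origin, where the integrand $|z_t|^{\gamma-2}$ is small, so a crude pointwise lower bound fails; completing the square in $t$ is exactly what converts this into a harmless one-dimensional lower bound that is uniform in the location of the segment. (One should resist appealing to convexity in $s$ here, since for $2<\gamma<3$ the function $s\mapsto|t-s|^{\gamma-2}$ is not convex; the explicit split over the far half-interval circumvents this.)
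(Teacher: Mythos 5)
Your proof is correct. Note that the paper itself offers no proof of this lemma: it is quoted from DiBenedetto \cite{D}, so there is no in-paper argument to compare against. Your argument is, in substance, the standard one behind the cited result: write $F(x)-F(y)=\int_0^1 DF\bigl(y+t(x-y)\bigr)(x-y)\,dt$, use $DF(z)\xi\cdot\xi\ge |z|^{\gamma-2}|\xi|^2$ to reduce everything to the scalar bound $\int_0^1|y+t(x-y)|^{\gamma-2}\,dt\ge C_\gamma|x-y|^{\gamma-2}$, and then obtain that bound uniformly in the position of the segment. Each step checks out: the Jacobian formula and its coercivity are right; $F$ is genuinely $C^1$ up to the origin for $\gamma>2$ (since $\|DF(z)\|\le(\gamma-1)|z|^{\gamma-2}\to0$), so the fundamental theorem of calculus applies even when the segment passes through $0$; and the completion of the square $|a+t\omega|^2=(t-t^\ast)^2+|a-(a\cdot\omega)\omega|^2$ followed by integration over the half-interval farther from $t^\ast$ gives the uniform constant $C_\gamma=\tfrac{1}{(\gamma-1)2^{\gamma-1}}$ without any convexity assumption on $s\mapsto|t-s|^{\gamma-2}$. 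You correctly identified and handled the only delicate point, namely that a pointwise lower bound on $|z_t|^{\gamma-2}$ fails when the segment approaches the origin, which is exactly why the one-dimensional integral estimate is needed. The argument is complete and yields an explicit constant.
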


%

\begin{theorem}
	\label{OU}
	Let \((u_1,u_2)\) and \((v_1,v_2)\) be two positive solutions of \eqref{SNR} satisfying \(u_1\le v_1\) or \(u_2\le v_2\). Then \(u_1\equiv v_1\) and \(u_2\equiv v_2\).
\end{theorem}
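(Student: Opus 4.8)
The plan is to reduce the two alternatives in the hypothesis to the single case $u_2\le v_2$, and then to exploit the observation that the first component $v_1$ of a positive solution is itself an eigenfunction, with eigenvalue $0$, of the Schr\"odinger-type operator $-\Delta+(b-v_2)$ under a homogeneous Robin boundary condition. Throughout I would use that positive solutions of \eqref{SNR} belong to $C^1(\overline\Omega)$ — by elliptic bootstrapping via Lemma \ref{regularity}, exactly as in the compactness argument for $\varPsi$ above — and that, by the strong maximum principle together with Hopf's lemma (as in the remark following Lemma \ref{phi1}), every positive solution has both components strictly positive on $\overline\Omega$; in particular $u_1,u_2,v_1,v_2>0$ on $\overline\Omega$, and this holds whether $\alpha=0$ or $\alpha>0$.

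\emph{Step 1 (reduction).} If $u_1\le v_1$, I would show $u_2\le v_2$. Put $z:=v_2-u_2$; subtracting the second equations and the boundary conditions of the two systems gives $-\Delta z=a(v_1-u_1)\ge0$ in $\Omega$ and $\partial_\nu z=-\beta\bigl(|v_2|^{\gamma-2}v_2-|u_2|^{\gamma-2}u_2\bigr)$ on $\partial\Omega$. Testing the weak form against $z_-:=\max\{0,-z\}\in H^1(\Omega)$ and using $\nabla z\cdot\nabla z_-=-|\nabla z_-|^2$ a.e., one obtains
\begin{equation*}
\int_\Omega|\nabla z_-|^2\,dx+a\int_\Omega(v_1-u_1)\,z_-\,dx=\beta\int_{\partial\Omega}\bigl(|v_2|^{\gamma-2}v_2-|u_2|^{\gamma-2}u_2\bigr)z_-\,dS.
\end{equation*}
The left-hand side is nonnegative, while on $\{z<0\}$ — the only set where $z_-\ne0$ — Lemma \ref{power} bounds the integrand on the right by $-\beta C_\gamma z_-^{\gamma}$, so the right-hand side is $\le0$. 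Hence all the terms vanish, which forces $\nabla z_-\equiv0$ and $z_-\equiv0$ on $\partial\Omega$, so $z_-\equiv0$, i.e.\ $u_2\le v_2$. Thus in either case of the hypothesis we may now assume $u_2\le v_2$.

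\emph{Step 2 (the case $u_2\le v_2$).} Set $w:=v_1-u_1$. Writing $v_1v_2-u_1u_2=v_2(v_1-u_1)+u_1(v_2-u_2)$ and subtracting the first equations of the two systems yields
\begin{equation*}
-\Delta w+(b-v_2)w=u_1(v_2-u_2)\quad\text{in }\Omega,\qquad \partial_\nu w+\alpha w=0\quad\text{on }\partial\Omega,
\end{equation*}
and the right-hand side is nonnegative since $u_1\ge0$ and $v_2\ge u_2$. On the other hand $v_1$ solves $-\Delta v_1+(b-v_2)v_1=0$ in $\Omega$ with $\partial_\nu v_1+\alpha v_1=0$ on $\partial\Omega$. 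Multiplying the equation for $w$ by $v_1$, integrating over $\Omega$ and integrating by parts twice, the boundary terms cancel (because $w$ and $v_1$ satisfy the same homogeneous Robin condition) and the interior term cancels by the equation for $v_1$, leaving
\begin{equation*}
\int_\Omega u_1(v_2-u_2)\,v_1\,dx=0.
\end{equation*}
Since $u_1,v_1>0$ on $\overline\Omega$ and $v_2-u_2\ge0$, this forces $v_2\equiv u_2$ on $\overline\Omega$; then $-\Delta(v_2-u_2)=a(v_1-u_1)$ collapses to $a(v_1-u_1)\equiv0$, hence $v_1\equiv u_1$, and the proof is complete.

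\emph{Main obstacle.} The delicate point is Step 2. Because $v_1>0$ is an eigenfunction of $-\Delta+(b-v_2)$ with the Robin boundary condition and eigenvalue $0$, this operator has principal eigenvalue exactly $0$; consequently the usual maximum-principle comparison applied to $w=v_1-u_1$ is only borderline and does not by itself give $w\ge0$. The device that circumvents this is precisely to test the differential inequality for $w$ against $v_1$ itself and use the exact cancellation, which is why the strict positivity of $v_1$ up to the boundary (Hopf's lemma) is essential. By contrast, Step 1 is a routine comparison argument resting on the monotonicity of $t\mapsto|t|^{\gamma-2}t$. Finally, it is worth noting that neither step uses the dimension restriction $N\le5$ nor the conditions {\rm (A)}/{\rm (B)}, in agreement with the unconditional statement of the theorem.
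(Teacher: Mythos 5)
Your proof is correct, and while Step 1 coincides with the paper's reduction (testing $-\Delta(u_2-v_2)=a(u_1-v_1)$ against the positive/negative part and invoking Lemma \ref{power} to kill the boundary term), your Step 2 takes a genuinely different route. The paper introduces the two Robin eigenvalue problems with potentials $b-u_2$ and $b-v_2$, observes that $u_1>0$ and $v_1>0$ are positive eigenfunctions of the shifted problems with the \emph{same} eigenvalue $L$, hence both equal to the respective principal eigenvalues $\mu_0$ and $\eta_0$, and derives a contradiction with the strict monotonicity $\eta_0<\mu_0$ of the principal eigenvalue in the potential. You instead write the equation for $w=v_1-u_1$ as $-\Delta w+(b-v_2)w=u_1(v_2-u_2)\ge0$ with the same homogeneous Robin condition as $v_1$, and test against $v_1$; Green's identity and $-\Delta v_1+(b-v_2)v_1=0$ give $\int_\Omega u_1v_1(v_2-u_2)\,dx=0$ directly, whence $u_2\equiv v_2$ and then $u_1\equiv v_1$. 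This is the same underlying mechanism (a positive solution is a principal eigenfunction), but your packaging is more elementary and self-contained: it avoids having to set up the Rayleigh quotients and to justify the strict inequality $\eta_0<\mu_0$, which the paper asserts without detail, and it proves $u_2\equiv v_2$ constructively rather than by contradiction. Two minor remarks: interior positivity of $u_1,v_1$ (strong maximum principle) already suffices to conclude $v_2\equiv u_2$ from the vanishing integral, so Hopf's lemma up to the boundary is not actually needed in Step 2; and the Green identity is most safely justified in the weak formulation (test the $w$-equation against $v_1$ and the $v_1$-equation against $w$ and subtract), which avoids any $H^2$-regularity discussion. You are also right that no dimension restriction or conditions (A)/(B) enter, consistent with the paper.
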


\begin{proof}
	Suppose that \(u_1\not\equiv v_1\) or \(u_2\not\equiv v_2\). Without loss of generality, we only have to consider the case where \(u_2\not\equiv v_2\) and \(u_2\le v_2\). In fact, if \(u_1\le v_1\), by the second equation of \(\eqref{SNR}\) we have
	\begin{equation}
		\label{3.14}
		-\Delta (u_2-v_2) = a (u_1-v_1) \le 0.
	\end{equation}
	Multiplying \(\eqref{3.14}\) by \([u_2-v_2]^+:=\max\{u_2-v_2,0\}\) and using integration by parts, we obtain
	\begin{equation}
		\label{3.15}
		\|\nabla [u_2-v_2]^+\|_{L^2(\Omega)}^2 + \beta\int_{\partial\Omega} [u_2-v_2]^+ \left( |u_2|^{\gamma-2}u_2 - |v_2|^{\gamma-2}v_2 \right)dS \le 0.
	\end{equation}
	Note that by Lemma \ref{power}
	\begin{align*}
		\int_{\partial\Omega}[u_2-v_2]^+ \left( |u_2|^{\gamma-2}u_2 - |v_2|^{\gamma-2}v_2 \right)dS &= \int_{\{u_2\ge v_2\}} (u_2-v_2) \left( |u_2|^{\gamma-2}u_2 - |v_2|^{\gamma-2}v_2 \right)dS\\[1.5mm]
		&\ge\int_{\{u_2\ge v_2\}} C_{\gamma} (u_2-v_2)^{\gamma}dS\\[1.5mm]
		&=C_{\gamma}\int_{\partial\Omega}\left( [u_2-v_2]^+ \right)^{\gamma}dS.
	\end{align*}
	By this inequality and \(\eqref{3.15}\), we get
	\begin{equation*}
		\|\nabla [u_2-v_2]^+\|_{L^2(\Omega)}^2 + C_{\gamma}\int_{\partial\Omega}\left( [u_2-v_2]^+ \right)^{\gamma}dS \le 0.
	\end{equation*}
	Therefore we have
	\begin{equation*}
		\nabla [u_2-v_2]^+=0,
	\end{equation*}
	\begin{equation*}
		\left.[u_2-v_2]^+\right|_{\partial\Omega}=0.
	\end{equation*}
	Hence we deduce \([u_2-v_2]^+\equiv 0\), i.e., \(u_2\le v_2\). \\
	
	Next we consider the following eigenvalue problems:
	\begin{equation}
		\label{3.16}
		\left\{
		\begin{aligned}
			&-\Delta w + \left( b-u_2(x) \right)w = \mu' w&&\mbox{in }\Omega,\\[1.5mm]
			&\partial_\nu w + \alpha w = 0 &&\mbox{on }\partial\Omega,
		\end{aligned}
		\right.
	\end{equation}
	and
	\begin{equation}
		\label{3.17}
		\left\{
		\begin{aligned}
			&-\Delta w + \left( b-v_2(x) \right)w = \eta' w&&\mbox{in }\Omega,\\[1.5mm]
			&\partial_\nu w + \alpha w = 0 &&\mbox{on }\partial\Omega.
		\end{aligned}
		\right.
	\end{equation}
	If necessary, we take some nonnegative constant \(L\ge0\) and add both sides of equations of \(\eqref{3.16}\) and \(\eqref{3.17}\) by \(L\), and we can assume \(U(x):=b-u_2(x)+L\ge 1\) and \(V(x):=b-v_2(x)+L\ge 1\). Thus we consider the following problems in stead of \(\eqref{3.16}\) and \(\eqref{3.17}\):
	\begin{equation}
		\label{3.18}
		\left\{
		\begin{aligned}
			&-\Delta w + U(x)w = \mu w&&\mbox{in }\Omega,\\[1.5mm]
			&\partial_\nu w + \alpha w = 0 &&\mbox{on }\partial\Omega,
		\end{aligned}
		\right.
	\end{equation}
	and
	\begin{equation}
		\label{3.19}
		\left\{
		\begin{aligned}
			&-\Delta w + V(x)w = \eta w&&\mbox{in }\Omega,\\[1.5mm]
			&\partial_\nu w + \alpha w = 0 &&\mbox{on }\partial\Omega.
		\end{aligned}
		\right.
	\end{equation}
	By applying the compactness argument for the associate Rayleigh's quotients of \(\eqref{3.18}\) and \(\eqref{3.19}\) , we know that the smallest positive eigenvalues of \(\eqref{3.18}\) and \(\eqref{3.19}\) are attained and we denote them by \(\mu_0\) and \(\eta_0\). Moreover, thanks to \(u_2\not\equiv v_2\) and \(u_2\le v_2\), we see that \(\eta_0<\mu_0\). On the other hand, since \((u_1,u_2)\) and \((v_1,v_2)\) are positive stationary solutions for \(\eqref{SNR}\), \(u_1>0\) and \(v_1>0\) satisfy
	\begin{equation*}
		\left\{
		\begin{aligned}
			&-\Delta u_1 + \left( b-u_2(x)+L \right)u_1 = L u_1&&\mbox{in }\Omega,\\[1.5mm]
			&\partial_\nu u_1 + \alpha u_1 = 0 &&\mbox{on }\partial\Omega,
		\end{aligned}
		\right.
	\end{equation*}
	and
	\begin{equation*}
		\left\{
		\begin{aligned}
			&-\Delta v_1 + \left( b-v_2(x)+L \right)v_1 = L v_1&&\mbox{in }\Omega,\\[1.5mm]
			&\partial_\nu v_1 + \alpha v_1 = 0 &&\mbox{on }\partial\Omega.
		\end{aligned}
		\right.
	\end{equation*}
	By the fact that the eigenvalue corresponding to the positive eigenfunction is the smallest one, we deduce \(\mu_0=L=\eta_0\). This contradicts \(\eta_0<\mu_0\). Thus the proof is completed.
\end{proof}

\section{Nonstationary Problem}
\hspace{\parindent}In this section, we investigate the large time behavior of solutions to \(\eqref{NR}\) and prove that the positive stationary solution plays a role of threshold 
to classify initial data into two groups; namely corresponding solutions of \(\eqref{NR}\) 
blow up in finite time or exist globally.

\subsection{Local Well-posedness}
\hspace{\parindent}First we state the local well-posedness of problem \(\eqref{NR}\).

\begin{theorem}\label{LWP}
	Assume \((u_{10},u_{20})\in L^{\infty}(\Omega)\times L^{\infty}(\Omega)\). Then there exists  \(T>0\) such that \(\eqref{NR}\) possesses a unique solution \((u_1,u_2)\in( L^{\infty}(0,T;L^{\infty}(\Omega))\cap C([0,T);L^2(\Omega)))^2\) satisfying
	\begin{equation}
		\label{4.0}
		\sqrt{t}\partial_tu_1,\sqrt{t}\partial_tu_2,\sqrt{t}\Delta u_1,\sqrt{t}\Delta u_2\in L^2(0,T;L^2(\Omega)).
	\end{equation}
	Furthermore, if the initial data is nonnegative, then the local solution \((u_1,u_2)\) for \(\eqref{NR}\) is nonnegative.
\end{theorem}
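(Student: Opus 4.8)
The plan is to recast \eqref{NR} as a coupled pair of abstract evolution equations in $H=L^2(\Omega)$ governed by subdifferential operators, to solve the two equations separately for a prescribed right-hand side, and to close the nonlinear coupling by a contraction mapping argument on a short time interval; the $L^{\infty}$-energy method of \^{O}tani \cite{O1} will supply the a priori $L^{\infty}$ bound that the quadratic term $u_1u_2$ makes indispensable. Concretely, introduce the proper, convex, lower semicontinuous functionals on $H$
\begin{equation*}
	\phi_1(u)=\frac12\int_{\Omega}|\nabla u|^2\,dx+\frac{\alpha}{2}\int_{\partial\Omega}u^2\,dS,\qquad
	\phi_2(u)=\frac12\int_{\Omega}|\nabla u|^2\,dx+\frac{\beta}{\gamma}\int_{\partial\Omega}|u|^{\gamma}\,dS,
\end{equation*}
with $D(\phi_1)=H^1(\Omega)$, $D(\phi_2)=\{u\in H^1(\Omega):u|_{\partial\Omega}\in L^{\gamma}(\partial\Omega)\}$, and $\phi_i=+\infty$ otherwise. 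As is classical (cf. Br\'ezis \cite{B1}), $\partial\phi_1(u)=-\Delta u$ realized with the Robin condition $\partial_{\nu}u+\alpha u=0$ and $\partial\phi_2(u)=-\Delta u$ realized with $\partial_{\nu}u+\beta|u|^{\gamma-2}u=0$, both domains being dense in $H$; thus \eqref{NR} becomes the system $\frac{du_1}{dt}+\partial\phi_1(u_1)=u_1u_2-bu_1$, $\frac{du_2}{dt}+\partial\phi_2(u_2)=au_1$, $u_i(0)=u_{i0}$.

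Fix $M:=2\max\{\|u_{10}\|_{L^{\infty}},\|u_{20}\|_{L^{\infty}}\}+1$ and, for $T>0$ to be chosen, let $\mathcal{X}_T$ be the set of $(w_1,w_2)\in C([0,T];H)^2$ with $w_i(0)=u_{i0}$ and $\|w_i\|_{L^{\infty}((0,T)\times\Omega)}\le M$; it is a complete metric space for the $C([0,T];H)^2$ distance. Given $(w_1,w_2)\in\mathcal{X}_T$, the theory of evolution equations governed by subdifferentials (Br\'ezis \cite{B1}), applied first to $\frac{du_1}{dt}+\partial\phi_1(u_1)=w_1w_2-bw_1\in L^2(0,T;H)$ and then to $\frac{du_2}{dt}+\partial\phi_2(u_2)=au_1\in L^2(0,T;H)$, yields unique strong solutions $u_i\in C([0,T];H)$ with $\sqrt{t}\,\frac{du_i}{dt}\in L^2(0,T;H)$ and $\frac{du_i}{dt}+\partial\phi_i(u_i)=f_i$ a.e., where $f_1=w_1w_2-bw_1$ and $f_2=au_1$; set $\Lambda(w_1,w_2):=(u_1,u_2)$. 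To prove the invariance $\Lambda(\mathcal{X}_T)\subset\mathcal{X}_T$, I would test the $u_1$-equation with $|u_1|^{p-2}u_1$ — admissible by the $L^{\infty}$-energy method \cite{O1}, since $\langle\partial\phi_1(u_1),|u_1|^{p-2}u_1\rangle\ge0$ — to get $\frac{d}{dt}\|u_1(t)\|_{L^p}\le(M^2+bM)|\Omega|^{1/p}$, hence, letting $p\to\infty$, $\|u_1(t)\|_{L^{\infty}}\le\|u_{10}\|_{L^{\infty}}+t(M^2+bM)$; likewise the nonlinear boundary term contributes the nonnegative quantity $\beta\int_{\partial\Omega}|u_2|^{p+\gamma-2}\,dS$, giving $\|u_2(t)\|_{L^{\infty}}\le\|u_{20}\|_{L^{\infty}}+atM$. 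Both bounds are $\le M$ for $t\le T$ provided $T$ is small.

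For the contraction, take $(w_1,w_2),(\tilde w_1,\tilde w_2)\in\mathcal{X}_T$ and write $\Lambda(w_1,w_2)=(u_1,u_2)$, $\Lambda(\tilde w_1,\tilde w_2)=(\tilde u_1,\tilde u_2)$; subtracting the equations, testing with $u_i-\tilde u_i$, discarding the nonnegative monotonicity terms coming from $\partial\phi_i$, and using $w_1w_2-\tilde w_1\tilde w_2=w_1(w_2-\tilde w_2)+\tilde w_2(w_1-\tilde w_1)$ to bound the difference of the right-hand sides in $L^2$ by $C(M)(\|w_1-\tilde w_1\|_{L^2}+\|w_2-\tilde w_2\|_{L^2})$, Gronwall's inequality gives (for $T\le1$, say) $\sup_{[0,T]}(\|u_1-\tilde u_1\|_{L^2}+\|u_2-\tilde u_2\|_{L^2})\le C(M)\,T\sup_{[0,T]}(\|w_1-\tilde w_1\|_{L^2}+\|w_2-\tilde w_2\|_{L^2})$, so $\Lambda$ is a contraction once $C(M)T<1$. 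Banach's fixed point theorem produces a unique fixed point, which is the desired local solution, and the same $L^2$-estimate applied to any two solutions in the stated class (both being bounded) gives uniqueness. The regularity \eqref{4.0} follows from $\sqrt{t}\,\Delta u_i=\sqrt{t}\,(f_i-\frac{du_i}{dt})\in L^2(0,T;H)$, since $\sqrt{t}\,f_i$ and $\sqrt{t}\,\frac{du_i}{dt}$ lie in $L^2(0,T;H)$. Finally, if $u_{i0}\ge0$, testing the $u_1$-equation with $[u_1]^-:=\min\{u_1,0\}$ yields $\frac12\frac{d}{dt}\|[u_1]^-\|_{L^2}^2+\|\nabla[u_1]^-\|_{L^2}^2+\alpha\int_{\partial\Omega}([u_1]^-)^2\,dS=\int_{\{u_1<0\}}(u_2-b)u_1^2\,dx\le(\|u_2\|_{L^{\infty}}+b)\|[u_1]^-\|_{L^2}^2$, so $[u_1]^-\equiv0$; then $u_1\ge0$ together with $\int_{\partial\Omega}|u_2|^{\gamma-2}u_2[u_2]^-\,dS\ge0$ and $a\int_{\Omega}u_1[u_2]^-\,dx\le0$, after testing the $u_2$-equation with $[u_2]^-$, forces $[u_2]^-\equiv0$.

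The step I expect to be the main obstacle is controlling the quadratic coupling $u_1u_2$: it is not globally Lipschitz on $H$ or on $L^{\infty}$, so no off-the-shelf perturbation theorem applies and the fixed point must be confined to the $L^{\infty}$-ball $\mathcal{X}_T$, whose self-map property under $\Lambda$ rests on the $L^{\infty}$-energy method; the latter has to be reconciled with the merely $L^2$-in-space a priori regularity supplied by the subdifferential framework — concretely, one works with truncated test functions or with the Yosida approximation of $\phi_i$ and passes to the limit. A secondary technical point is the identification of $\partial\phi_2$ with the correct weak realization of $-\Delta$ under the nonlinear boundary condition, in particular the handling of the domain restriction $u|_{\partial\Omega}\in L^{\gamma}(\partial\Omega)$ for large $\gamma$.
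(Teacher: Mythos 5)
Your argument is correct, and it uses the same two pillars as the paper --- Br\'ezis's theory of evolution equations governed by the subdifferentials $\partial\phi_1,\partial\phi_2$ (which is exactly how the paper realizes the Robin and the nonlinear boundary conditions), and the $L^{\infty}$-energy method of \cite{O1} (test with $|u|^{p-2}u$, let $p\to\infty$ via Lemma \ref{L2.4}, with the division-by-$\|u\|_{L^p}^{p-1}$ issue handled as in \cite{MO1}) --- but it closes the nonlinear coupling differently. The paper truncates the nonlinearity, replacing $u_1u_2$ by $[u_1]_M[u_2]_M$, so that the perturbation is globally Lipschitz on $L^2\times L^2$ and the approximate system is solvable \emph{globally}; it then runs the $L^{\infty}$-energy method on the approximate solution itself, obtains the superlinear integral inequality $y(t)\le y(0)+\int_0^t(y^2+ay)\,d\tau$, and invokes Lemma \ref{L2.5} to conclude $y\le y(0)+1$ on a short interval, so that for $M$ large the cut-off is never active. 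You instead freeze the nonlinearity, solve the two decoupled linear-in-the-unknown problems, and run a Banach fixed point on the $L^{\infty}$-ball $\mathcal{X}_T$; the invariance of $\mathcal{X}_T$ is where your $L^{\infty}$-energy estimate enters, and because the right-hand side of the frozen problem is bounded by $M^2+bM$ you get the cruder but sufficient linear-in-$t$ growth $\|u_1(t)\|_{L^{\infty}}\le\|u_{10}\|_{L^{\infty}}+t(M^2+bM)$ instead of needing Lemma \ref{L2.5} at all. The trade-off: your route buys uniqueness for free from the contraction (the paper proves it separately by the same $L^2$ difference estimate you sketch, using the monotonicity of $s\mapsto|s|^{\gamma-2}s$ on the boundary, i.e.\ Lemma \ref{power}), at the cost of having to verify that $\mathcal{X}_T$ is closed in $C([0,T];H)^2$ and that $\Lambda$ is well defined on it; the paper's route avoids any fixed-point iteration but must argue a posteriori that the truncation is inactive. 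Your treatment of the regularity \eqref{4.0} and of nonnegativity (testing with the negative parts, using $|u_2|^{\gamma-2}u_2[u_2]^-\ge0$ on $\partial\Omega$ and $u_1\ge0$ established first) coincides with the paper's. The one point you should spell out if you write this up in full is the admissibility of the test function $|u_1|^{p-2}u_1$ before the $L^{\infty}$ bound is known; you correctly flag that truncated test functions or Yosida approximation resolve this, and the same caveat is implicit in the paper's computation.
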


In order to prove this theorem, we rely on \(L^{\infty}\)-energy method developed in \(\cite{O1}\). To this end, we prepare some crucial lemmas.

\begin{lemma}
	\label{L2.4}
	{\upshape (\(\cite{O1}\))}
	Let \(\Omega\) be any domain in \(\mathbb{R}^N\) and assume that exists a number \(r_0\ge 1\) and a constant \(C\) independent of \(r\in[r_0,\infty)\) such that 
	\begin{equation*}
		\|u\|_{L^r(\Omega)}\le C\hspace{10mm}\forall r\in[r_0,\infty),
	\end{equation*}
	then \(u\) belongs to \(L^{\infty}(\Omega)\) and the following property holds.
	\begin{equation}
		\label{infty}
		\lim_{r\rightarrow\infty}\|u\|_{L^r(\Omega)}=\|u\|_{L^{\infty}(\Omega)}.
	\end{equation}
	
	Conversely, assume that \(u\in L^{r_0}(\Omega)\cap L^{\infty}(\Omega)\) for some \(r_0\in[1,\infty)\), then \(u\) satisfies \(\eqref{infty}\).
\end{lemma}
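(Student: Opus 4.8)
The plan is to observe that both assertions rest on a single elementary fact about $L^r$-norms, preceded by a Chebyshev-type estimate that upgrades the uniform bound into membership in $L^\infty$. First I would dispose of the $L^\infty$-claim in the direct statement, then prove the two halves of $\eqref{infty}$ (a $\limsup$ bound and a $\liminf$ bound), and note that after the first step the hypotheses of both directions collapse to the same assumption $u\in L^{r_0}(\Omega)\cap L^\infty(\Omega)$.

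\emph{Step 1 (the uniform bound forces $u\in L^\infty$).} Assume $\|u\|_{L^r(\Omega)}\le C$ for all $r\in[r_0,\infty)$. For $\lambda>0$ set $A_\lambda=\{x\in\Omega:|u(x)|>\lambda\}$. Chebyshev's inequality gives $\lambda^r|A_\lambda|\le\int_{A_\lambda}|u|^r\le\|u\|_{L^r}^r\le C^r$, i.e. $|A_\lambda|\le(C/\lambda)^r$ for every $r\ge r_0$. If $\lambda>C$, letting $r\to\infty$ forces $|A_\lambda|=0$; hence $|u|\le C$ a.e., so $u\in L^\infty(\Omega)$ with $\|u\|_{L^\infty}\le C$. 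Taking $r=r_0$ in the hypothesis also yields $u\in L^{r_0}(\Omega)$, so from here on I may assume $u\in L^{r_0}(\Omega)\cap L^\infty(\Omega)$ in both cases.

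\emph{Step 2 (the $\limsup$).} For $r\ge r_0$ I would split $|u|^r=|u|^{r_0}|u|^{r-r_0}$ and bound the second factor by $\|u\|_{L^\infty}^{r-r_0}$, obtaining $\|u\|_{L^r}^r\le\|u\|_{L^\infty}^{r-r_0}\|u\|_{L^{r_0}}^{r_0}$, hence $\|u\|_{L^r}\le\|u\|_{L^\infty}^{1-r_0/r}\|u\|_{L^{r_0}}^{r_0/r}$. Since $r_0/r\to0$, this gives $\limsup_{r\to\infty}\|u\|_{L^r}\le\|u\|_{L^\infty}$. \emph{Step 3 (the $\liminf$).} I may assume $\|u\|_{L^\infty}>0$ (else $u=0$ a.e. and $\eqref{infty}$ is trivial). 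Fix $0<\varepsilon<\|u\|_{L^\infty}$ and set $B=\{x:|u(x)|>\|u\|_{L^\infty}-\varepsilon\}$. By definition of the essential supremum $|B|>0$, and since $\int_B|u|^{r_0}\ge(\|u\|_{L^\infty}-\varepsilon)^{r_0}|B|$ with $u\in L^{r_0}$, also $|B|<\infty$. Then $\|u\|_{L^r}^r\ge\int_B|u|^r\ge(\|u\|_{L^\infty}-\varepsilon)^r|B|$, so $\|u\|_{L^r}\ge(\|u\|_{L^\infty}-\varepsilon)|B|^{1/r}\to\|u\|_{L^\infty}-\varepsilon$. Letting $\varepsilon\downarrow0$ yields $\liminf_{r\to\infty}\|u\|_{L^r}\ge\|u\|_{L^\infty}$, which together with Step 2 proves $\eqref{infty}$.

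\emph{Main obstacle.} The only real subtlety is that $\Omega$ is permitted to have infinite Lebesgue measure, so the usual shortcut $\|u\|_{L^r}\le\|u\|_{L^\infty}|\Omega|^{1/r}$ is unavailable and one cannot directly say $|B|^{1/r}\to1$ or $|A_\lambda|<\infty$ for free. This is exactly why the integrability assumption $u\in L^{r_0}(\Omega)$ — automatic in the first case by taking $r=r_0$ in the hypothesis — is invoked twice: in Step 2 to tame the tail of $\int_\Omega|u|^r$, and in Step 3 to force the level set $B$ to have finite positive measure so that $|B|^{1/r}\to1$. Everything else is routine.
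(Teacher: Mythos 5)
The paper does not prove this lemma: it is quoted verbatim from the reference \(\cite{O1}\) and used as a black box, so there is no in-paper argument to compare against. Your proof is correct and complete as a self-contained derivation of the cited result: the Chebyshev estimate \(|\{|u|>\lambda\}|\le (C/\lambda)^r\to 0\) for \(\lambda>C\) correctly yields \(u\in L^\infty\) with \(\|u\|_{L^\infty}\le C\), the interpolation bound \(\|u\|_{L^r}\le\|u\|_{L^\infty}^{1-r_0/r}\|u\|_{L^{r_0}}^{r_0/r}\) gives the \(\limsup\) half, and the level-set argument (with the observation that \(u\in L^{r_0}\) forces \(0<|B|<\infty\), so \(|B|^{1/r}\to1\)) gives the \(\liminf\) half; you also correctly identify that the only delicacy is \(|\Omega|=\infty\), which is exactly what the \(L^{r_0}\) hypothesis is there to handle.
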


\begin{lemma}
	\label{L2.5}
	{\upshape (\(\cite{O1}\))}
	Let \(y(t)\) be a bounded measurable non-negative function on \([0,T]\) and suppose that there exists \(y_0\ge0\) and a monotone non-decreasing function \(m(\cdot): [0,+\infty)\rightarrow[0,+\infty)\) such that
	\begin{equation*}
		y(t)\le y_0+\int_{0}^{t}m(y(s))ds\hspace{10mm}\mbox{a.e.}~t\in(0,T).
	\end{equation*}
	Then there exists a number \(T_0=T_0(y_0,m(\cdot))\in(0,T]\) such that
	\begin{equation*}
		y(t)\le y_0+1\hspace{10mm}\mbox{a.e.}~t\in[0,T_0].
	\end{equation*}
\end{lemma}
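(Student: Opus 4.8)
The plan is to reduce the statement to a continuation argument for a continuous \emph{majorant} of $y$. Put
$$Y(t) := y_0 + \int_0^t m(y(s))\,ds, \qquad t \in [0,T].$$
Since $y$ is bounded and measurable and $m$ is monotone (hence Borel measurable), the composition $m(y(\cdot))$ is measurable and bounded on $[0,T]$, so $Y$ is Lipschitz continuous and non-decreasing, $Y(0) = y_0$, $Y'(t) = m(y(t))$ a.e., and the hypothesis reads $y(t) \le Y(t)$ for a.e. $t \in (0,T)$.

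Next I would work with the maximal interval on which $Y$ stays below $y_0+1$:
$$T_1 := \sup\{\, \tau \in [0,T] : Y(t) \le y_0+1 \ \text{for all}\ t \in [0,\tau] \,\}.$$
By continuity of $Y$ the constraint persists at the supremum, so $Y(t) \le y_0+1$ on all of $[0,T_1]$, and (again by continuity, together with maximality of $T_1$) $Y(T_1) = y_0+1$ whenever $T_1 < T$. On $[0,T_1]$ we then have $y(t) \le Y(t) \le y_0+1$ a.e., and the monotonicity of $m$ yields $Y'(t) = m(y(t)) \le m(y_0+1)$ for a.e.\ $t \in [0,T_1]$, hence the linear bound $Y(t) \le y_0 + m(y_0+1)\,t$ on $[0,T_1]$.

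Finally I would choose $T_0$. If $m(y_0+1) = 0$, the linear bound together with the monotonicity of $Y$ forces $Y \equiv y_0$ on $[0,T_1]$; since $y_0 \ne y_0+1$, this is incompatible with $T_1 < T$, so $T_1 = T$ and we take $T_0 := T$. If $m(y_0+1) > 0$, set $T_0 := \min\{\,T,\ 1/m(y_0+1)\,\}$ and suppose, for contradiction, that $T_1 < T_0$. Then $T_1 < T$, so $Y(T_1) = y_0+1$; but $T_1 < T_0 \le 1/m(y_0+1)$, so the linear bound gives $Y(T_1) \le y_0 + m(y_0+1)\,T_1 < y_0+1$, a contradiction. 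Hence $T_1 \ge T_0$ in every case, and therefore $y(t) \le Y(t) \le y_0+1$ for a.e.\ $t \in [0,T_0]$, which is the assertion; note that $T_0$ depends only on $y_0$, $m(\cdot)$ (and the given $T$).

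The only genuinely delicate point is that $y$ is merely measurable, so one cannot run a ``first exit time of $y$'' argument directly; passing to the continuous, non-decreasing majorant $Y$ is precisely what makes the continuation step legitimate. The remaining ingredients — measurability and boundedness of $m(y(\cdot))$, absolute continuity of $Y$, and the elementary comparison $Y' \le m(y_0+1)$ on $[0,T_1]$ — are routine.
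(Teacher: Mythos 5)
The paper does not prove this lemma at all: it is quoted verbatim from \cite{O1} and used as a black box, so there is no in-paper argument to compare against. Your proof is correct and self-contained, and it is essentially the standard argument behind the cited result: replace the merely measurable $y$ by the Lipschitz, non-decreasing majorant $Y(t)=y_0+\int_0^t m(y(s))\,ds$, run a first-exit-time (continuation) argument on $Y$, and use monotonicity of $m$ to get the linear bound $Y(t)\le y_0+m(y_0+1)\,t$ up to the exit time, which forces the exit time to exceed $\min\{T,1/m(y_0+1)\}$. All the delicate points are handled properly: measurability and boundedness of $m\circ y$ (monotone functions are Borel, and $y$ is bounded), absolute continuity of $Y$ so that $Y(t)=y_0+\int_0^t Y'(s)\,ds$ with $Y'=m(y(\cdot))$ a.e., the persistence of the constraint at the supremum by continuity, and the separate treatment of the degenerate case $m(y_0+1)=0$. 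Your closing remark correctly identifies why one must pass to $Y$ rather than argue with $y$ directly.
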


\begin{proof}[Proof of Theorem \ref{LWP}]
	(Existence and regularity) We consider the following approximate problem:
	\begin{equation}
		\label{c-NR}
		\left\{
		\begin{aligned}
			& \partial_t u_1-\Delta u_1 = [u_1]_M[u_2]_M - bu_1, && x\in\Omega,~t>0,\\
			& \partial_t u_2-\Delta u_2 = au_1,  && x\in\Omega,~t>0,\\
			& \partial_{\nu}u_1 + \alpha u_1 = \partial_{\nu}u_2 + \beta |u_2|^{\gamma-2}u_2 = 0, && x\in\partial\Omega,~t>0,\\
			& u_1(x,0)=u_{10}(x),~u_2(x,0)=u_{20}(x), &&x\in\Omega,
		\end{aligned}
		\right.
	\end{equation}
	where \(M>0\) is a given constant and the cut-off function \([u]_M\) is defined by
	\begin{equation*}
		[u]_M=
		\begin{cases}
			M, &u\ge M,\\
			u, &|u|\le M,\\
			-M, &u\le-M.
		\end{cases}
	\end{equation*}
	Since \(u\mapsto [u]_M\) is Lipschitz continuous from \(L^2(\Omega)\) into itself, it is well known that \(\eqref{c-NR}\) has a unique global solution \((u_1,u_2)\) satisfying \(\eqref{4.0}\) by applying the abstract theory on maximal monotone operators developed by H. Br\'ezis \(\cite{B1}\).
	
	By multiplying the first equation of \(\eqref{c-NR}\) by \(|u_1|^{r-2}u_1\) and using integration by parts, 
	\begin{equation*}
		\frac{1}{r}\frac{d}{dt}\|u_1(t)\|_{L^r}^r+(r-1)\int_{\Omega}|\nabla u_1|^2u_1^{r-2}dx+\alpha\int_{\partial\Omega}|u_1|^rdS\le\int_{\Omega}|u_1|^{r}|u_2|dx-b\int_{\Omega}|u_1|^rdx.
	\end{equation*}
	Hence
	\begin{equation*}
		\frac{1}{r}\frac{d}{dt}\|u_1(t)\|_{L^r}^r\le \|u_2(t)\|_{L^{\infty}}\|u_1(t)\|_{L^r}^r.
	\end{equation*}
	Divide both sides by \(\|u_1\|_{L^r}^{r-1}\) and integrate with respect to \(t\) on \([0,t]\), then we get
	\begin{equation*}
		\|u_1(t)\|_{L^r}\le\|u_{10}\|_{L^r} + \int_{0}^{t}\|u_1(\tau)\|_{L^r}\|u_2(\tau)\|_{L^{\infty}}d\tau.
	\end{equation*} 
	Letting \(r\) tend to \(\infty\) (Lemma \ref{L2.4}), we derive
	\begin{equation*}
		\|u_1(t)\|_{L^{\infty}}\le\|u_{10}\|_{L^{\infty}} + \int_{0}^{t}\|u_1(\tau)\|_{L^{\infty}}\|u_2(\tau)\|_{L^{\infty}}d\tau.
	\end{equation*}
	Similarly, we can get the following \(L^{\infty}\) estimate for \(u_2\) ;
	\begin{equation*}
		\|u_2(t)\|_{L^{\infty}}\le\|u_{20}\|_{L^{\infty}} + \int_{0}^{t} a\|u_1(\tau)\|_{L^{\infty}}d\tau.
	\end{equation*}
	Therefore setting \(y(t)=\|u_1(t)\|_{L^{\infty}(\Omega)}+\|u_2(t)\|_{L^{\infty}(\Omega)}\), we get
	\begin{equation*}
		y(t)\le y(0)+\int_{0}^{t}\left(y^2(\tau)+ay(\tau)\right)d\tau.
	\end{equation*}		
	Thus applying Lemma \ref{L2.5}, we find that there exists a number \(T>0\) depending only on \(\|u_{10}\|_{L^{\infty}(\Omega)}\) and \(\|u_{20}\|_{L^{\infty}(\Omega)}\) such that
	\begin{equation*}
		y(t)\le y(0) +1 \hspace{10mm} \mbox{a.e.}~ t\in[0,T].
	\end{equation*}
	In other words, we get
	\begin{equation*}
		\|u_1(t)\|_{L^{\infty}(\Omega)}+\|u_2(t)\|_{L^{\infty}(\Omega)}\le \|u_{10}\|_{L^{\infty}(\Omega)}+\|u_{20}\|_{L^{\infty}(\Omega)}+1 \hspace{10mm}\mbox{a.e.}~ t\in[0,T].
	\end{equation*}
	Hence choosing \(M>\|u_{10}\|_{L^{\infty}(\Omega)}+\|u_{20}\|_{L^{\infty}(\Omega)}+1\), we can see that \((u_1,u_2)\) gives a solution for \(\eqref{NR}\) on \([0,T]\) by the definition of the cut-off function \([u]_M\). Note that even though \(\|u_1(t)\|_{L^r}^{r-1}\) attains zero, we can justify this argument by Proposition 1 in \(\cite{MO1}\). To get the regularity estimate of the solution for \(\eqref{NR}\) is standard, so we omit the details.
	
	(Uniqueness) Let \((u_1,u_2)\) and \((v_1,v_2)\) be two solutions to \(\eqref{NR}\) with initial data \((u_{10},u_{20})\) and \((v_{10}, v_{20})\) respectively. We set \(w_1=u_1-v_1\) and \(w_2=u_2-v_2\). From \(\eqref{NR}\), we have
	\begin{equation}
		\label{uni1}
		\partial_t w_1 - \Delta w_1 = w_1u_2 + v_1w_2 -bw_1,
	\end{equation}
	\begin{equation}
		\label{uni2}
		\partial_t w_2 - \Delta w_2 = aw_1,
	\end{equation}
	\begin{equation*}
		\partial_\nu w_1 + \alpha w_1 = \partial_\nu w_2 + \beta \left( |u_2|^{\gamma-2}u_2 - |v_2|^{\gamma-2}v_2 \right) = 0,\hspace{7mm}\mbox{on}~\partial\Omega.
	\end{equation*}
	We multiply \(\eqref{uni1}\) and \(\eqref{uni2}\) by \(w_1\) and \(w_2\) respectively, integrate over \(\Omega\) and use integration by parts. Then we obtain 
	\begin{align*}
		&\frac{1}{2}\frac{d}{dt}\|w_1(t)\|_{L^2(\Omega)}^2 + \|\nabla w_1\|_{L^2(\Omega)}^2 +\alpha \int_{\partial\Omega}w_1^2dS\\[1.5mm] 
		\le&~ \int_{\Omega}w_1^2u_2dx +\int_{\Omega}v_1w_1w_2dx\\[1.5mm]
		\le& ~\|u_2\|_{L^{\infty}(0,T;L^{\infty}(\Omega))}\int_{\Omega}w_1^2dx + \|v_1\|_{L^{\infty}(0,T;L^{\infty}(\Omega))}\int_{\Omega}w_1w_2dx\\[1.5mm]
		\le& ~C \left( \|w_1(t)\|_{L^2(\Omega)}^2 + \|w_2(t)\|_{L^2(\Omega)}^2 \right),
	\end{align*}
	and 
	\begin{align*}
		&\frac{1}{2}\frac{d}{dt}\|w_2(t)\|_{L^2(\Omega)}^2 + \|\nabla w_2\|_{L^2(\Omega)}^2 +\beta \int_{\partial\Omega}\left( |u_2|^{\gamma-2}u_2 - |v_2|^{\gamma-2}v_2 \right) \left( u_2-v_2 \right)dS\\[1.5mm] \le&~ a \int_{\Omega}w_1w_2dx\\[1.5mm]
		\le&~ \frac{a}{2} \left( \|w_1(t)\|_{L^2(\Omega)}^2 + \|w_2(t)\|_{L^2(\Omega)}^2 \right).
	\end{align*}
	Noting that 
	\begin{equation*}
		\int_{\partial\Omega}\left( |u_2|^{\gamma-2}u_2 - |v_2|^{\gamma-2}v_2 \right) \left( u_2-v_2 \right)dS \ge \int_{\partial\Omega}C_{\gamma} |w_2|^{\gamma}dS\ge 0
	\end{equation*}
	by Lemma \ref{power}, we can get the following differential inequality:
	\begin{equation*}
		\frac{d}{dt} \left( \|w_1(t)\|_{L^2(\Omega)}^2 + \|w_2(t)\|_{L^2(\Omega)}^2 \right) \le C \left( \|w_1(t)\|_{L^2(\Omega)}^2 + \|w_2(t)\|_{L^2(\Omega)}^2 \right),
	\end{equation*}
	whence, from Gronwall's inequality,
	\begin{equation*}
		\left( \|w_1(t)\|_{L^2(\Omega)}^2 + \|w_2(t)\|_{L^2(\Omega)}^2 \right) \le  \left( \|u_{10}-v_{10}\|_{L^2(\Omega)}^2 + \|u_{20}-v_{20}\|_{L^2(\Omega)}^2 \right) e^{Ct} \hspace{5mm}t\in[0,T).
	\end{equation*}
	This yields the uniqueness of the solution for \(\eqref{NR}\).
	
	(Nonnegativity) 
	Multiplying the first equation of \(\eqref{NR}\) by \(u_1^-:=\max\{-u_1,0\}\), we get
	\begin{equation*}
		\int_{\Omega}\partial_t u_1u_1^-dx-\int_{\Omega}\Delta u_1u_1^-dx \ge - \int_{\Omega}|u_1^-|^2|u_2|dx-b\int_{\Omega}u_1u_1^-dx.
	\end{equation*}
	Here, we can see that
	\begin{equation*}
		\int_{\Omega}\partial_tu_1 u_1^-dx =\int_{\{u_1\le0\}}\partial_tu_1 (-u_1)dx=-\frac{1}{2}\frac{d}{dt}\int_{\{u_1\le0\}}(-u_1)^2dx=-\frac{1}{2}\frac{d}{dt}\int_{\Omega}\left(u_1^-\right)^2dx,
	\end{equation*}
	and
	\begin{align*}
		-\int_{\Omega}\Delta u_1 u_1^-dx &= \int_{\Omega}\nabla u_1\cdot\nabla u_1^-dx + \alpha \int_{\partial\Omega}u_1u_1^-dS\\[1.5mm]
		&= -\int_{\Omega}|\nabla u_1^-|^2dx - \alpha\int_{\{u_1\le0\}}u_1^2dS = - \int_{\Omega}|\nabla u_1^-|^2dx - \alpha \int_{\partial\Omega}\left(u_1^-\right)^2dS.
	\end{align*}
	Therefore we have
	\begin{align*}
		\frac{1}{2}\frac{d}{dt}\|u_1^-(t)\|_{L^2(\Omega)}^2 + \|\nabla u_1^-\|_{L^2(\Omega)}^2 + \alpha \int_{\partial\Omega}\left( u_1^- \right)^2dS &= \int_{\Omega}|u_1^-|^2|u_2|dx
		 - b\|u_1^-(t)\|_{L^2(\Omega)}^2\\[1.5mm]
		&\le \|u_2\|_{L^{\infty}(0,T;L^{\infty}(\Omega))} \|u_1^-(t)\|_{L^2(\Omega)}^2.
	\end{align*}
	Applying Gronwall's inequality, we obtain
	\begin{equation*}
		\|u_1^-(t)\|_{L^2(\Omega)}^2\le \|u_1^-(0)\|_{L^2(\Omega)}^2 e^{2\|u_2\|_{L^{\infty}(0,T;L^{\infty}(\Omega))}t}\hspace{7mm}t\in[0,T),
	\end{equation*}
	where \(T\) is maximal existence time for \(\eqref{NR}\). Since \(u_{10}\ge 0\), i.e., \(\|u_1^-(0)\|_{L^2(\Omega)}=0\), it holds that 
	\begin{align*}
		u_1^-(t)=0 \hspace{7mm} a.e.~~ \mbox{in}~\Omega\hspace{5mm}\forall t\in[0,T).
	\end{align*}
	Hence \(u_1\ge 0\). Similarly, multiplying the second equation of \(\eqref{NR}\) by \(-u_2^-\), we get
	\begin{equation*}
		\frac{1}{2}\frac{d}{dt}\|u_2^-(t)\|_{L^2(\Omega)}^2 + \|\nabla u_2^-\|_{L^2(\Omega)}^2 + \beta \int_{\partial\Omega}|u_2|^{\gamma-2}|u_2^-|^2 dS = -a \int_{\Omega}u_1u_2^-dx \le 0.
	\end{equation*}
	Therefore \(\|u_2^-(t)\|_{L^2(\Omega)}^2\le \|u_2^-(0)\|_{L^2(\Omega)}^2=0\), i.e., \(u_2\ge 0\). 
\end{proof}

\subsection{Threshold Property}
\hspace{\parindent}Finally, we study the threshold property and prove that every positive stationary solution for \(\eqref{NR}\) gives a threshold for the blow up of solutions in the following sense.

\begin{theorem}
	\label{Thres}
	Let \((\overline{u}_1,\overline{u}_2)\) be a positive stationary solution of \(\eqref{NR}\), 
	then the followings hold.
	\vspace{2mm}\\
	{\rm (1)} Let \(0\le u_{10}(x)\le\overline{u}_1(x)\), \(0\le u_{20}(x)\le\overline{u}_2(x)\), then the solution \((u_1,u_2)\) of \(\eqref{NR}\) exists globally. In addition, if \(0\le u_{10}(x)\le l_1\overline{u}_1(x)\), \(0\le u_{20}(x)\le l_2\overline{u}_2(x)\) for some \(0<l_1<l_2\le1\), then
	\begin{equation*}
		\lim_{t\rightarrow+\infty} (u_1(x,t),u_2(x,t))=(0,0)~~~\text{\rm pointwisely on}\ 
		\overline{\Omega}.
	\end{equation*}
	{\rm (2)} Assume further \(\gamma=2\), \(\alpha\le2\beta\) and let \(u_{10}(x)\ge l_1\overline{u}_1(x)\), \(u_{20}(x)\ge l_2\overline{u}_2(x)\) for some \(l_1>l_2>1\), then the solution \((u_1,u_2)\) of \(\eqref{NR}\) blows up in finite time.
\end{theorem}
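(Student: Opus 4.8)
The plan is to prove both parts by the comparison principle, exploiting that on the positive cone \eqref{NR} is a cooperative (quasi-monotone) system: $u_1u_2-bu_1$ is nondecreasing in $u_2$ when $u_1\ge0$, $au_1$ is nondecreasing in $u_1$, and $s\mapsto\beta|s|^{\gamma-2}s$ is nondecreasing. Hence ordered sub- and supersolutions are ordered with the solution of \eqref{NR}. At the level of the $L^\infty$-solutions furnished by Theorem \ref{LWP} I would justify this comparison either by first proving it for the truncated problem \eqref{c-NR}, where it follows from the abstract monotone-operator theory of \cite{B1} together with the above monotonicity, and then passing to the limit $M\to\infty$, or directly by the energy computation used in the uniqueness part of Theorem \ref{LWP}: multiply the difference of the two systems by the positive parts of the differences of the components, observe that the coupling terms carry the right sign, bound the boundary term by Lemma \ref{power}, and close with Gronwall.

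For part (1), the stationary solution $(\overline{u}_1,\overline{u}_2)$ is a time-independent supersolution of \eqref{NR} and $(0,0)$ is a subsolution, so $0\le u_{i0}\le\overline{u}_i$ forces $0\le u_i(x,t)\le\overline{u}_i(x)$ throughout the existence interval; since the local existence time in Theorem \ref{LWP} depends only on $\|u_{10}\|_{L^\infty}+\|u_{20}\|_{L^\infty}$, this uniform bound yields a global solution. For the decay statement I would build a family of \emph{shrinking} supersolutions $(\phi_1(t)\overline{u}_1(x),\phi_2(t)\overline{u}_2(x))$. Substituting into \eqref{NR} and using the stationary identities $-\Delta\overline{u}_1+b\overline{u}_1=\overline{u}_1\overline{u}_2$ and $-\Delta\overline{u}_2=a\overline{u}_1$ (the boundary conditions of $\overline{u}_i$ making all boundary terms vanish), the supersolution requirement collapses, after dividing by $\overline{u}_1$ and by $\overline{u}_2$ respectively, to the pointwise differential inequalities
\begin{equation*}
	\phi_1'\ \ge\ \phi_1\,\overline{u}_2(x)\,(\phi_2-1),\qquad\qquad \phi_2'\,\overline{u}_2(x)\ \ge\ a\,\overline{u}_1(x)\,(\phi_1-\phi_2)\qquad(x\in\overline{\Omega}).
\end{equation*}
Writing $m_2:=\min_{\overline{\Omega}}\overline{u}_2>0$ and $\kappa_*:=a\min_{\overline{\Omega}}(\overline{u}_1/\overline{u}_2)>0$ (positive by the Hopf argument behind Lemma \ref{phi1}), it then suffices to solve the planar system $\phi_1'=-m_2\phi_1(1-\phi_2)$, $\phi_2'=-\kappa_*(\phi_2-\phi_1)$ with $\phi_1(0)=l_1$, $\phi_2(0)=l_2$. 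The hypotheses enter precisely here: $l_1<l_2$ keeps $g:=\phi_2-\phi_1$ positive, so $\phi_2$ stays decreasing; $l_2\le1$ then keeps $\phi_2\le1$, so the first inequality holds; and from this one checks $\phi_1,\phi_2\to0$. Comparison gives $0\le u_i(x,t)\le\phi_i(t)\overline{u}_i(x)\to0$ pointwise on $\overline{\Omega}$, which also covers the rescaled initial data.

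When $\gamma>2$ this argument must be amended, because $\phi_2\overline{u}_2$ with $\phi_2<1$ violates the required boundary inequality $\partial_\nu(\phi_2\overline{u}_2)+\beta(\phi_2\overline{u}_2)^{\gamma-1}\ge0$. In that regime I would argue indirectly: from $u_2\le\overline{u}_2$ the function $u_1$ is a subsolution of the linear Robin problem $\partial_t v=\Delta v+(\overline{u}_2-b)v$, whose generator is self-adjoint with principal eigenvalue $0$ and principal eigenfunction $\overline{u}_1$; order-preservation of that semigroup with $u_{10}\le l_1\overline{u}_1$ gives $\limsup_{t\to\infty}u_1(\cdot,t)\le l_1\overline{u}_1$. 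Hence, eventually, $u_2$ is a subsolution of $-\Delta W=al_1\overline{u}_1$, $\partial_\nu W+\beta W^{\gamma-1}=0$, whose (unique, by \cite{B1}) nonnegative solution $W_1$ satisfies $W_1\lneqq\overline{u}_2$ in $\Omega$ by the strong maximum principle and Hopf's lemma (using $l_1<1$); consequently the principal eigenvalue of $-\Delta-(W_1-b)$ with the Robin condition is \emph{strictly} positive, so $u_1$ decays exponentially, and then $u_2$ — which solves $\partial_t u_2-\Delta u_2=au_1\to0$ with the dissipative boundary condition — has $\omega$-limit set $\{0\}$. This $\gamma>2$ case is the main technical obstacle, precisely because no multiple of $(\overline{u}_1,\overline{u}_2)$ is a supersolution there; the other delicate point is making the comparison principle fully rigorous for merely-$L^\infty$ solutions.

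For part (2) the assumption $\gamma=2$ is essential: it turns the boundary condition for $u_2$ into the linear Robin condition, so rescalings of $\overline{u}_2$ respect it exactly, and I would look for a \emph{growing} subsolution $(\psi_1(t)\overline{u}_1,\psi_2(t)\overline{u}_2)$. The same reduction as above yields the requirements $\psi_1'\le m_2\psi_1(\psi_2-1)$ and $\psi_2'\le\kappa_*(\psi_1-\psi_2)$, so I would take the planar system $\psi_1'=m_2\psi_1(\psi_2-1)$, $\psi_2'=\kappa_*(\psi_1-\psi_2)$ with $\psi_1(0)=l_1>l_2=\psi_2(0)>1$. Here $l_1>l_2$ keeps $\psi_1-\psi_2>0$ and $\psi_2\ge l_2>1$, whence $\psi_1$ grows at least exponentially, $\psi_2\to\infty$, and an elementary ODE argument — for instance $\theta:=\log\psi_1$ satisfies $\theta''\gtrsim e^{\theta}$ once $\psi_1$ is large — shows that $\psi_1,\psi_2$ blow up at a finite time $T$. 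Since $u_{i0}\ge l_i\overline{u}_i$, comparison gives $u_i(\cdot,t)\ge\psi_i(t)\overline{u}_i$ on the maximal interval, so $\|u_i(t)\|_{L^\infty}\to\infty$ as $t\uparrow T$ and the solution blows up in finite time. For $\gamma>2$ this construction fails because the boundary term $\beta(\psi_2\overline{u}_2)^{\gamma-1}$ with $\psi_2>1$ has the wrong sign for a subsolution; the extra hypothesis $\alpha\le2\beta$ is the condition under which a positive stationary solution exists at all (Theorem \ref{EoPSS}).
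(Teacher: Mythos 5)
Your part (1) is where the real problem lies. The decay statement in (1) is asserted for every \(\gamma\in[2,\infty)\), but your shrinking supersolution \((\phi_1\overline u_1,\phi_2\overline u_2)\) respects the boundary condition only when \(\gamma=2\): for \(\gamma>2\) one gets \(\partial_\nu(\phi_2\overline u_2)+\beta(\phi_2\overline u_2)^{\gamma-1}=\beta\,\overline u_2^{\gamma-1}(\phi_2^{\gamma-1}-\phi_2)<0\) as soon as \(\phi_2<1\), the wrong sign for a supersolution --- as you yourself note. The replacement you sketch for \(\gamma>2\) (compare \(u_2\) with the elliptic solution \(W_1\) for the source \(al_1\overline u_1\), deduce a strictly positive principal eigenvalue of \(-\Delta+(b-W_1)\), then exponential decay of \(u_1\)) is only a program: the step ``eventually \(u_2\le W_1\)'' requires a genuine argument that the parabolic flow with a time-dependent source converges to the elliptic solution \emph{uniformly in \(x\)}, and without that uniformity the spectral decay estimate for \(u_1\) does not close. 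The paper avoids the dichotomy altogether: it applies the positive-part/Gronwall computation to the time differences \(u_i(t+h)-u_i(t)\) --- the nonlinear boundary term has the right sign for every \(\gamma\ge2\) by monotonicity of \(s\mapsto|s|^{\gamma-2}s\) --- observes that \([\partial_tu_i(0)]^+=0\) because \((l_1\overline u_1,l_2\overline u_2)\) is a strict supersolution (this is exactly where \(l_1<l_2\le1\) enters), so \(u_i\) is nonincreasing in \(t\); it then identifies the monotone limit as a nonnegative stationary solution by energy estimates, and Theorem \ref{OU} forces that limit to be \((0,0)\). You should either adopt that argument or restrict your supersolution construction to \(\gamma=2\) and genuinely complete the other case.

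Part (2), by contrast, is correct and takes a genuinely different route that buys something. The paper only constructs the exponentially growing subsolution \(l_ie^{\varepsilon t}\overline u_i\), which cannot blow up by itself, and therefore must run a Kaplan-type contradiction argument: testing with the Robin eigenfunction \(\varphi_1\), deriving a second-order differential inequality for \(\int_\Omega u_2\varphi_1\,dx\), and using the exponential lower bound to absorb lower-order terms; the hypothesis \(\alpha\le2\beta\) is used there analytically, to keep the boundary coefficient \(\beta-\tfrac{\alpha}{2}\) nonnegative. Your separable subsolution with \(\psi_1'=m_2\psi_1(\psi_2-1)\), \(\psi_2'=\kappa_*(\psi_1-\psi_2)\), \(\psi_1(0)=l_1>\psi_2(0)=l_2>1\) blows up in finite time on its own (the region \(\psi_1>\psi_2>1\) is invariant, \(\psi_2\to\infty\), and since \(\psi_1-\psi_2\ge\delta\psi_1\) eventually one gets \(\psi_2'\ge c\,\psi_2^2\)), so comparison bounds the maximal existence time directly; this even renders \(\alpha\le2\beta\) superfluous in part (2) beyond its role in Theorem \ref{EoPSS}. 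Two details must still be spelled out: the lower bound \(\psi_1-\psi_2\ge\delta\psi_1\) underlying your inequality for \(\theta=\log\psi_1\) (it holds but is not automatic), and the sub/supersolution version of Lemma \ref{CP}, which the paper also uses without separate proof.
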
	

\begin{remark}\upshape
	The second assertion of Theorem \ref{Thres} is also announced in \(\cite{GW1}\)	for the case where \(\alpha=0\) and \(\gamma=2\). However it seems that their proof contains some serious gaps.
\end{remark}

We first prepare the following comparison theorem.

\begin{lemma}[Comparison theorem]
	\label{CP}
	If \((u_{10},u_{20})\), \((v_{10},v_{20})\) are two initial data for \(\eqref{NR}\) satisfying
	\begin{equation*}
		0\le u_{10}\le v_{10},~~~0\le u_{20}\le v_{20}~~~~~\mbox{on}~\overline{\Omega},
	\end{equation*}
	then the corresponding solutions \((u_1,u_2)\), \((v_1,v_2)\) remain in the initial data order in time interval where the solutions exist, i.e., \(u_1(x,t)\le v_1(x,t)\) and \(u_2(x,t)\le v_2(x,t)\) a.e. \(x\in\Omega\) as long as \((u_1,u_2)\) and \((v_1,v_2)\) exist. 
\end{lemma}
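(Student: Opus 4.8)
The plan is to run a Gronwall energy estimate on the positive parts of the differences of the two solutions, in the spirit of the uniqueness argument in the proof of Theorem~\ref{LWP}. Since both $(u_1,u_2)$ and $(v_1,v_2)$ are nonnegative by Theorem~\ref{LWP}, I set $w_1=u_1-v_1$, $w_2=u_2-v_2$ and $w_i^+:=\max\{w_i,0\}$, and note that it suffices to prove $w_1^+\equiv 0$ and $w_2^+\equiv 0$ on $[0,T)$, where $T$ denotes the intersection of the two maximal existence intervals. Subtracting the equations of \eqref{NR} for the two solutions and using the algebraic identity $u_1u_2-v_1v_2=u_1w_2+v_2w_1$, one gets
\begin{equation*}
\partial_t w_1-\Delta w_1=u_1w_2+v_2w_1-bw_1,\qquad \partial_t w_2-\Delta w_2=aw_1,
\end{equation*}
together with the boundary conditions $\partial_\nu w_1+\alpha w_1=0$ and $\partial_\nu w_2+\beta(|u_2|^{\gamma-2}u_2-|v_2|^{\gamma-2}v_2)=0$ on $\partial\Omega$.

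Next I would test the first equation with $w_1^+$ and the second with $w_2^+$, integrate over $\Omega$ and integrate by parts. For $w_1$, the boundary integral yields $\alpha\int_{\partial\Omega}(w_1^+)^2\,dS\ge 0$ which is kept on the left, the term $-b\int_\Omega w_1w_1^+\,dx=-b\|w_1^+\|_{L^2}^2\le 0$ is discarded, and since $u_1\ge 0$ one has $\int_\Omega u_1w_2w_1^+\,dx\le\int_\Omega u_1w_2^+w_1^+\,dx$, so together with $\int_\Omega v_2(w_1^+)^2\,dx\le\|v_2\|_{L^\infty}\|w_1^+\|_{L^2}^2$ and Young's inequality one obtains $\tfrac12\tfrac{d}{dt}\|w_1^+\|_{L^2}^2\le C(\|w_1^+\|_{L^2}^2+\|w_2^+\|_{L^2}^2)$, the constant $C$ depending only on the $L^\infty(0,T';L^\infty(\Omega))$-bounds of $u_1$ and $v_2$ for any $T'<T$. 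For $w_2$, the boundary integral is $\beta\int_{\partial\Omega}(|u_2|^{\gamma-2}u_2-|v_2|^{\gamma-2}v_2)w_2^+\,dS=\beta\int_{\{u_2>v_2\}}(|u_2|^{\gamma-2}u_2-|v_2|^{\gamma-2}v_2)(u_2-v_2)\,dS\ge 0$ by Lemma~\ref{power}, hence can be dropped, and $a\int_\Omega w_1w_2^+\,dx\le a\int_\Omega w_1^+w_2^+\,dx\le\tfrac{a}{2}(\|w_1^+\|_{L^2}^2+\|w_2^+\|_{L^2}^2)$. Adding the two inequalities gives
\begin{equation*}
\frac{d}{dt}\bigl(\|w_1^+(t)\|_{L^2(\Omega)}^2+\|w_2^+(t)\|_{L^2(\Omega)}^2\bigr)\le C\bigl(\|w_1^+(t)\|_{L^2(\Omega)}^2+\|w_2^+(t)\|_{L^2(\Omega)}^2\bigr),
\end{equation*}
and since $u_{10}\le v_{10}$, $u_{20}\le v_{20}$ force $w_1^+(0)=w_2^+(0)=0$, Gronwall's inequality yields $w_1^+\equiv w_2^+\equiv 0$, i.e. $u_1\le v_1$ and $u_2\le v_2$.

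The main obstacle is justifying these manipulations at the regularity afforded by Theorem~\ref{LWP}: the solutions satisfy only $\sqrt{t}\,\partial_t u_i,\sqrt{t}\,\Delta u_i\in L^2(0,T;L^2(\Omega))$, so $\partial_t u_i$ and $\Delta u_i$ need not be integrable up to $t=0$; moreover one must legitimize using $w_i^+$ as a test function and the identity $\langle\partial_t w_i,w_i^+\rangle=\tfrac12\tfrac{d}{dt}\|w_i^+\|_{L^2}^2$, which is the standard consequence of Mignot's (Stampacchia-type) lemma once $w_i(t)\in H^1(\Omega)$ with $\partial_t w_i(t)\in L^2(\Omega)$ for a.e.\ $t$. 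I would therefore carry out the above energy estimate on subintervals $[\varepsilon,T']\subset(0,T)$, where the weighted regularity is genuine, obtaining $\|w_1^+(t)\|_{L^2}^2+\|w_2^+(t)\|_{L^2}^2\le(\|w_1^+(\varepsilon)\|_{L^2}^2+\|w_2^+(\varepsilon)\|_{L^2}^2)e^{C(t-\varepsilon)}$, and then let $\varepsilon\to 0^+$: since $w_i\in C([0,T);L^2(\Omega))$ and $s\mapsto s^+$ is continuous on $L^2(\Omega)$, one has $\|w_i^+(\varepsilon)\|_{L^2}\to\|w_i^+(0)\|_{L^2}=0$, which closes the argument on all of $[0,T)$.
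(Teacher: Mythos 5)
Your proposal is correct and follows essentially the same route as the paper: subtract the equations, test with the positive parts $w_i^+$, use the monotonicity of $s\mapsto|s|^{\gamma-2}s$ (Lemma~\ref{power}) to discard the boundary term for $w_2$, and close with Gronwall from $w_i^+(0)=0$. The only cosmetic difference is the splitting $u_1u_2-v_1v_2=u_1w_2+v_2w_1$ (using $u_1\ge0$) versus the paper's $w_1u_2+v_1w_2$ (using $v_1\ge0$), which is immaterial; your extra remarks on justifying the energy identity near $t=0$ are a welcome refinement the paper leaves implicit.
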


\begin{proof}
	Let \(w_1=u_1-v_1\), \(w_2=u_2-v_2\). By \(\eqref{NR}\) we have
	\begin{equation}
		\label{4.2}
		\left\{
		\begin{aligned}
			&\partial_t w_1 - \Delta w_1 = w_1u_2 + v_1w_2 -bw_1,&&x\in\Omega,~t\in(0,T_m),\\
			&\partial_t w_2 - \Delta w_2 = aw_1,&&x\in\Omega,~t\in(0,T_m),\\
			&\partial_\nu w_1 + \alpha w_1 = \partial_\nu w_2 + \beta \left( |u_2|^{\gamma-2}u_2 - |v_2|^{\gamma-2}v_2 \right) = 0,&&x\in\partial\Omega,~t\in(0,T_m),\\
			&w_1(x,0)\le 0,~~ w_2(x,0)\le 0,&&x\in\overline{\Omega},
		\end{aligned}
		\right.
	\end{equation}
	where \(T_m>0\) is the maximum existence time for \((u_1,u_2)\) and \((v_1,v_2)\). We set
	\begin{equation*}
		w^+=w \vee 0, ~~w^- = (-w) \vee 0,
	\end{equation*}
	where \(a\vee b=\max\{a,b\}\). It is easy to see that \(w^+\), \(w^-\ge0\) and 
	\begin{equation*}
		w=w^+-w^-, ~~ |w|=w^+ + w^-.
	\end{equation*}
	Multiplying the first equation of \(\eqref{4.2}\) by \(w_1^+\), we get
	\begin{align*}
		\int_{\Omega}\partial_tw_1 w_1^+dx - \int_{\Omega}\Delta w_1 w_1^+dx = \int_{\Omega}w_1u_2w_1^+dx + \int_{\Omega}v_1w_2w_1^+dx - b\int_{\Omega}w_1w_1^+dx.
	\end{align*}
	Here, we see that
	\begin{equation*}
		\int_{\Omega}\partial_tw_1 w_1^+dx =\int_{\{w_1\ge0\}}\partial_tw_1 w_1dx=\frac{1}{2}\frac{d}{dt}\int_{\{w_1\ge0\}}w_1^2dx=\frac{1}{2}\frac{d}{dt}\int_{\Omega}\left(w_1^+\right)^2dx.
	\end{equation*}
	Similarly,
	\begin{align*}
		-\int_{\Omega}\Delta w_1 w_1^+dx &= \int_{\Omega}\nabla w_1\cdot\nabla w_1^+dx + \alpha \int_{\partial\Omega}w_1w_1^+dS\\[1.5mm]
		&= \int_{\{w_1\ge0\}}|\nabla w_1|^2dx + \alpha\int_{\{w_1\ge0\}}w_1^2dS = \int_{\Omega}|\nabla w_1^+|^2dx + \alpha \int_{\partial\Omega}\left(w_1^+\right)^2dS.
	\end{align*}
	Hence noting that \(v_1\ge 0\), we obtain for any \(T\in(0,T_m)\)
	\begin{align*}
		&\frac{1}{2}\frac{d}{dt}\int_{\Omega}\left(w_1^+\right)^2dx + \int_{\Omega}|\nabla w_1^+|^2dx + \alpha \int_{\partial\Omega}\left(w_1^+\right)^2dS\\[1.5mm] 
		=&~ \int_{\Omega}w_1u_2w_1^+dx + \int_{\Omega}v_1w_2w_1^+dx - b\int_{\Omega}w_1w_1^+dx\\[1.5mm]
		=&~ \int_{\Omega}\left( w_1^+ - w_1^- \right)u_2 w_1^+dx +\int_{\Omega}v_1\left( w_2^+ - w_2^- \right)w_1^+dx -b \int_{\Omega}\left(w_1^+\right)^2dx\\[1.5mm]
		\le&~ \|u_2\|_{L^{\infty}(0,T;L^{\infty}(\Omega))}\int_{\Omega}\left(w_1^+\right)^2dx+ \|v_1\|_{L^{\infty}(0,T;L^{\infty}(\Omega))}\int_{\Omega}w_1^+w_2^+dx\\[1.5mm]
		\le&~ C\left( \|w_1^+(t)\|_{L^2(\Omega)}^2 + \|w_2^+(t)\|_{L^2(\Omega)}^2 \right).
	\end{align*}
	Hence we get
	\begin{equation}
		\label{4.3}
		\frac{1}{2}\frac{d}{dt}\|w_1^+(t)\|_{L^2(\Omega)}^2\le C \left( \|w_1^+(t)\|_{L^2(\Omega)}^2 + \|w_2^+(t)\|_{L^2(\Omega)}^2 \right).
	\end{equation}
	Next we do the same calculation for the second equation of \(\eqref{4.2}\) and get
	\begin{equation*}
		\frac{1}{2}\frac{d}{dt}\int_{\Omega}\left(w_2^+\right)^2dx + \int_{\Omega}|\nabla w_2^+|^2dx - \int_{\partial\Omega}(\partial_\nu w_2) w_2^+dS \le \frac{a}{2}\left( \|w_1^+(t)\|_{L^2(\Omega)}^2 + \|w_2^+(t)\|_{L^2(\Omega)}^2 \right),
	\end{equation*}
	and 
	\begin{align*}
		-\int_{\partial\Omega}(\partial_\nu w_2)w_2^+dS &= \beta \int_{\partial\Omega} \left( |u_2|^{\gamma-2}u_2 - |v_2|^{\gamma-2}v_2 \right) w_2^+dS\\[1.5mm]
		&=\beta \int_{\{u_2\ge v_2\}} \left( |u_2|^{\gamma-2}u_2 - |v_2|^{\gamma-2}v_2 \right) \left( u_2 - v_2 \right) dS \ge 0.
	\end{align*}
	Therefore
	\begin{equation}
		\label{4.4}
		\frac{1}{2}\frac{d}{dt}\|w_2^+(t)\|_{L^2(\Omega)}^2 \le \frac{a}{2} \left( \|w_1^+(t)\|_{L^2(\Omega)}^2 + \|w_2^+(t)\|_{L^2(\Omega)}^2 \right).
	\end{equation}
	Thus by \(\eqref{4.3}\), \(\eqref{4.4}\) and Gronwall's inequality, we get 
	\begin{equation*}
		\|w_1^+(t)\|_{L^2(\Omega)}^2 + \|w_2^+(t)\|_{L^2(\Omega)}^2 \le \left( \|w_1^+(0)\|_{L^2(\Omega)}^2 + \|w_2^+(0)\|_{L^2(\Omega)}^2 \right) e^{Ct}\hspace{5mm}\forall t\in[0,T_m).
	\end{equation*}
	Since \(w_1^+(0)=w_2^+(0)=0\), the above inequality means \(w_1^+=w_2^+=0\). Hence, we have the desired result.
\end{proof}

\begin{proof}[Proof of Theorem \ref{Thres}]
	(1) If \(0\le u_{10}\le\overline{u}_1\) and \(0\le u_{20}\le\overline{u}_2\), then since \((\overline{u}_1,\overline{u}_2)\) is a global solution for \(\eqref{NR}\), \(0\le u_1(x,t)\le\overline{u}_1(x)\) and \(0\le u_2(x,t)\le\overline{u}_2(x)\) follow directly from Lemma \ref{CP}. That is, we have
	\begin{equation*}
		\sup_{t\in[0,T)}\|u_i(\cdot,t)\|_{L^{\infty}(\Omega)} \le \|\overline{u}_i\|_{L^{\infty}(\Omega)}\hspace{5mm}(i=1,2).
	\end{equation*}
	Hence the solution \((u_1,u_2)\) exists globally.
	
	In addition, let \(u_{10}(x)\le l_1\overline{u}_1(x)\), \(u_{20}(x)\le l_2\overline{u}_2(x)\) for some \(0<l_1<l_2\le1\). Since the comparison theorem holds, without loss of generality, we can assume that \(u_{10}(x)=l_1\overline{u}_1(x)\), \(u_{20}(x)=l_2\overline{u}_2(x)\) and \(l_1<l_2\le1\). We here note that \(\delta u_1:=u_1(t+h)-u_1(t)\) and \(\delta u_2:=u_2(t+h)-u_2(t)\) for \(h>0\) satisfy the following equations:
	\begin{equation}
		\label{4.5}
		\left\{
		\begin{aligned}
			&\partial_t\left(\delta u_1\right) - \Delta\left(\delta u_1\right) = \left(\delta u_1\right)u_2(t+h) + u_1(t)\left(\delta u_2\right) - b\left(\delta u_1\right),\\
			&\partial_t\left(\delta u_2\right) - \Delta\left(\delta u_2\right) = a\left(\delta u_1\right),\\
			&\partial_\nu \left(\delta u_1\right) + \alpha\left(\delta u_1\right) = \partial_\nu\left(\delta u_2\right) + \beta \left( |u_2(t+h)|^{\gamma-2}u_2(t+h) - |u_2(t)|^{\gamma-2}u_2(t) \right) = 0,\\
			&\delta u_1(0) = u_1(0+h)-u_1(0),~~\delta u_2(0) = u_2(0+h) - u_2(0).
		\end{aligned}
		\right.
	\end{equation}
	Multiplying the first and second equation of \(\eqref{4.5}\) by \([\delta u_1]^+\) and \([\delta u_2]^+\) respectively and using integration by parts and repeating the same argument as for \(\eqref{4.3}\), we obtain the following inequality:
	\begin{equation*}
		\|[\delta u_1]^+\|_{L^2(\Omega)}^2 + \|[\delta u_2]^+\|_{L^2(\Omega)}^2 \le \left( \|[\delta u_1(0)]^+\|_{L^2(\Omega)}^2 + \|[\delta u_2(0)]^+\|_{L^2(\Omega)}^2 \right) e^{Ct} \hspace{7mm}\forall t\in[0,\infty).
	\end{equation*}
	We divide both sides of this inequality by \(h^2\):
	\begin{equation*}
		\left\| \left[\frac{\delta u_1}{h}\right]^+ \right\|_{L^2(\Omega)}^2 + \left\| \left[\frac{\delta u_2}{h}\right]^+ \right\|_{L^2(\Omega)}^2 \le \left( \left\| \left[\frac{\delta u_1(0)}{h}\right]^+ \right\|_{L^2(\Omega)}^2 + \left\| \left[\frac{\delta u_2(0)}{h}\right]^+ \right\|_{L^2(\Omega)}^2 \right) e^{Ct}.
	\end{equation*}
	Since we know that \(u_1\), \(u_2\) is differentiable on \(a.e.\) \(t\) by the regularity results of Theorem 4.1, by letting \(h\searrow 0\), we obtain
	\begin{equation*}
		\| [\partial_t u_1]^+ \|_{L^2}^2 + \| [\partial_t u_2]^+ \|_{L^2}^2 \le \left( \| [\partial_t u_1(0)]^+ \|_{L^2}^2 + \| [\partial_t u_2(0)]^+ \|_{L^2}^2  \right) e^{Ct}
		\hspace{5mm}a.e.~ t\in[0,\infty).
	\end{equation*}
	We here note that since \((l_1\overline{u}_1, l_2\overline{u}_2)\) is strict upper solution for \(\eqref{SNR}\), it holds that 
	\begin{align*}
		\partial_t u_1(0) &= \Delta u_{10} + u_{10}u_{20} - bu_{10}\\[1.5mm]
		&= l_1\Delta \overline{u}_1 + l_1l_2\overline{u}_1\overline{u}_2 - bl_1 \overline{u}_1\\[1.5mm]
		&\le l_1 \left( \Delta \overline{u}_1 + \overline{u}_1\overline{u}_2 - b \overline{u}_1 \right) =0,\\[1.5mm]
		\partial_t u_2(0) &= \Delta u_{20} + a u_{10}\\[1.5mm]
		&= l_2 \Delta\overline{u}_2 + al_1\overline{u}_1\\[1.5mm]
		&< l_2 \left( \Delta\overline{u}_2 + a\overline{u}_1 \right)=0,
	\end{align*}
	which imply that \([\partial_t u_1(0)]^+=[\partial_t u_2(0)]^+=0\). Hence we find that \(\partial_tu_1\le 0\) and \(\partial_t u_2\le 0\), i.e., \(u_1(x,t)\) and \(u_2(x,t)\) are monotone decreasing in \(t\) for a.e. \(x\in\Omega\). Thus
	\begin{equation*}
		\lim_{t\rightarrow\infty}\left( u_1(x,t), u_2(x,t) \right) =: ( \tilde{u}_1(x), \tilde{u}_2(x) )
	\end{equation*} 
	exists and satisfies \((0,0)\le(\tilde{u}_1, \tilde{u}_2)\le(l_1\overline{u}_1, l_2\overline{u}_2)<(\overline{u}_1,\overline{u}_2)\).
	Now we prove that \((\tilde{u}_1, \tilde{u}_2)\) is a nonnegative stationary solution of \(\eqref{NR}\). 
	First we note that
	\begin{align}
	\label{mod1}
	&u_i(t)\rightarrow \tilde{u}_i \hspace{5mm}strongly\mbox{ ~in }L^p(\Omega)\hspace{5mm}\mbox{as~ }k\rightarrow\infty\hspace{5mm}\forall p\in(1,\infty)\hspace{9mm}(i=1,2).
	\end{align}
	In fact, since \( |u_i(x,t)-\tilde{u}_i(x)|^p\rightarrow 0 \) \(a.e.\) \(x\in\Omega\) as \(t\rightarrow\infty\) and \(|u_i(x,t)-\tilde{u}_i(x)|^p \le2^p|\overline{u}_i(x)|^p \le2^p\|\overline{u}_i\|_{L^{\infty}(\Omega)}^p \) ~\(a.e.\) ~\(x\in\Omega\), Lebesgue's dominant convergence theorem assures \(\eqref{mod1}\).
	Next multiplying the first and the second equations of \(\eqref{NR}\) by \(\partial_t u_1\) and \(\partial_t u_2\) respectively, we get
	\begin{align*}
	&\|\partial_t u_1(t)\|_{L^2(\Omega)}^2 + \frac{d}{dt} \left\{ \frac{1}{2}\|\nabla u_1(t)\|_{L^2(\Omega)}^2 + \frac{\alpha}{2}\|u_1(t)\|_{L^2(\partial\Omega)}^2 + \frac{b}{2}\|u_1(t)\|_{L^2(\Omega)}^2 \right\}\\[1.5mm]
	=& \int_{\Omega} u_1u_2\partial_t u_1dx \le 0,
	\end{align*}
	\begin{align*}
	\|\partial_t u_2(t)\|_{L^2(\Omega)}^2 + \frac{d}{dt} \left\{ \frac{1}{2}\|\nabla u_2(t)\|_{L^2(\Omega)}^2 + \frac{\beta}{\gamma}\|u_2(t)\|_{L^{\gamma}(\partial\Omega)}^\gamma\right\} = a\int_{\Omega} u_1\partial_t u_2dx \le 0.
	\end{align*}
	Then integration of these over \((0,T)\) for any \(T>0\) gives
	\begin{equation}
	\label{mod2}
	\int_{0}^{\infty} \|\partial_t u_1(t)\|_{L^2(\Omega)}^2 dt + \int_{0}^{\infty} \|\partial_t u_2(t)\|_{L^2(\Omega)}^2 dt \le C_0,
	\end{equation}
	\begin{equation}
	\label{mod3}
	\sup_{t>0} \left\{ \|u_1(t)\|_{H^1(\Omega)}^2 + \|u_2(t)\|_{H^1(\Omega)}^2 \right\} \le C_0,
	\end{equation}
	where \(C_0\) is a positive constant depending on \(\|u_{10}\|_{H^1(\Omega)}\), \(\|u_{20}\|_{H^1(\Omega)}\) and \(\|u_{20}\|_{L^{\gamma}(\partial\Omega)}\). Hence since \(u_i\in L^{\infty}(0,\infty;L^{\infty}(\Omega))\) \((i=1,2)\), from equation \(\eqref{NR}\), we derive
	\begin{equation}
	\label{mod4}
	\int_{n}^{n+1} \left\{ \|\partial_t u_1(t)\|_{L^2(\Omega)}^2  + \|\partial_t u_2(t)\|_{L^2(\Omega)}^2 \right\} dt \rightarrow 0 \hspace{9mm}\mbox{as}\hspace{5mm}n\rightarrow\infty,
	\end{equation}
	\begin{equation}
	\label{mod5}
	\sup_{n} \int_{n}^{n+1} \left\{ \|\Delta u_1(t)\|_{L^2(\Omega)}^2 + \|\Delta u_2(t)\|_{L^2(\Omega)}^2 \right\} dt \le C_0.
	\end{equation}
	Furthermore, since \(\|u_2(t)\|_{L^{\infty}(\partial\Omega)} \le \|u_2(t)\|_{L^{\infty}(\Omega)}\) (see \(\cite{HO1}\)), we obtain
	\begin{equation}
	\label{mod6}
	\sup_{t>0} \|u_2(t)\|_{L^{\infty}(\partial\Omega)}\le \|\overline{u}_2\|_{L^{\infty}(\Omega)}.
	\end{equation}
	Here we put 
	\begin{equation}
	\label{mod7}
	u_i^n(x,t) = u_i(x,n+t) \in \mathscr{H} := L^2(0,1;L^2(\Omega)) \hspace{5mm}t\in(0,1)\hspace{9mm}(i=1,2).
	\end{equation}
	Then \(u_i^n(t)\) satisfy
	\begin{equation}
	\left\{
	\begin{aligned}
	&\partial_t u_1^n(t) - \Delta u_1^n(t) = u_1^n(t)u_2^n(t) - b u_1^n(t),&&x\in\Omega,~t\in(0,1),\\[1.5mm]
	&\partial_t u_2^n(t) - \Delta u_2^n(t) = au_1^n(t),&&x\in\Omega,~t\in(0,1),\\[1.5mm]
	&\partial_\nu u_1^n(t) + \alpha u_1^n(t) = \partial_\nu u_2^n(t) + \beta |u_2^n(t)|^{\gamma-2}u_2^n(t) = 0,&&x\in\partial\Omega,~t\in(0,1).
	\end{aligned}
	\right.
	\end{equation}
	Then, by virtue of \(\eqref{mod1}\), \(\eqref{mod3}\), \(\eqref{mod4}\), \(\eqref{mod5}\) and \(\eqref{mod6}\), there exists subsequence of \(\{u_i^n(t)\}\) denoted again by \(\{u_i^n(t)\}\) such that
	\begin{align}
	&\partial_t u_i^n(t) \rightarrow 0&&strongly~\mbox{ in }~\mathscr{H}~\mbox{ as }~n\rightarrow\infty,\\[1.5mm]
	&u_i^n(t)\rightarrow \tilde{u}_i(t)\equiv\tilde{u}_i&&strongly~\mbox{ in }~\mathscr{H}~\mbox{ as }~n\rightarrow\infty,\\[1.5mm]
	&u_1^n(t)u_2^n(t)\rightarrow \tilde{u}_1(t)\tilde{u}_2(t)\equiv\tilde{u}_1\tilde{u}_2&&strongly~\mbox{ in }~\mathscr{H}~\mbox{ as }~n\rightarrow\infty,\\[1.5mm]
	&\Delta u_i^n(t)\rightharpoonup \Delta\tilde{u}_i(t)\equiv\Delta\tilde{u}_i&&weakly~\mbox{ in }~\mathscr{H}~\mbox{ as }~n\rightarrow\infty,\\[1.5mm]
	&u_i^n(t)\rightarrow \tilde{u}_i(t)\equiv\tilde{u}_i&&strongly~\mbox{ in }~L^2(0,1;L^2(\partial\Omega))~\mbox{ as }~n\rightarrow\infty,\\[1.5mm]
	&|u_2^n(t)|^{\gamma-2}u_2^n(t) \rightharpoonup |\tilde{u}_2|^{\gamma-2}\tilde{u}_2&&weakly~\mbox{ in }~L^2(0,1;L^2(\partial\Omega))~\mbox{ as }~n\rightarrow\infty,\\[1.5mm]
	&\partial_\nu u_i^n(t) \rightharpoonup \partial_\nu \tilde{u}_i&&weakly~\mbox{ in }~L^2(0,1;L^2(\partial\Omega))~\mbox{ as }~n\rightarrow\infty.
	\end{align}
	Thus \(\tilde{u}_1\) and \(\tilde{u}_2\) satisfy
	\begin{equation*}
	\left\{
	\begin{aligned}
	&-\Delta \tilde{u}_1 = \tilde{u}_1\tilde{u}_2 -b\tilde{u}_1,&&x\in\Omega,\\
	&-\Delta \tilde{u}_2 = a\tilde{u}_1,&&x\in\Omega,\\
	&\partial_\nu \tilde{u}_1 + \alpha \tilde{u}_1 = \partial_\nu \tilde{u}_2 + \beta |\tilde{u}_2|^{\gamma-2}\tilde{u}_2=0,&&x\in\partial\Omega.
	\end{aligned}
	\right.
	\end{equation*}
	
	\vspace{3mm}
	(2) Let \(\gamma=2\) and \(\alpha\le2\beta\). By the comparison theorem, we can assume without loss of generality that \(u_{10}(x)=l_1\overline{u}_1(x)\), \(u_{20}(x)=l_2\overline{u}_2(x)\) for some \(l_1>l_2>1\). Suppose that the solution \((u_1,u_2)\) of \(\eqref{NR}\) exists globally, i.e.,
	\begin{equation}
		\label{4.6}
		\sup_{t\in[0,T]}\|u_i(\cdot,t)\|_{L^{\infty}(\Omega)}<\infty,\hspace{5mm} (i=1,2)\hspace{4mm}\forall~ T>0.
	\end{equation}
	Now we are going to construct a subsolution. For this purpose, we first note that there exists a sufficiently small number \(\varepsilon>0\) such that
	\begin{equation}
		\label{4.7}
		\left\{
		\begin{aligned}
			&a(l_2-l_1)\overline{u}_1+\varepsilon l_2\overline{u}_2< 0 &&\mbox{on }\overline{\Omega},\\
			&\varepsilon +(1-l_2)\overline{u}_2< 0 &&\mbox{on }\overline{\Omega}.
		\end{aligned}
		\right.
	\end{equation}
	Here we used the fact that \(\overline{u}_1(x)>0\), \(\overline{u}_2(x)>0\) on \(\overline{\Omega}\), which is assured by Hopf's type maximum principle.
	Let \(u_1^*(x,t)=l_1e^{\varepsilon t}\overline{u}_1(x)\) and \(u_2^*(x,t)=l_2e^{\varepsilon t}\overline{u}_2(x)\). Then using \(\eqref{4.7}\), we get
	\begin{align*}
		\partial_t u_1^* - \Delta u_1^* -u_1^*u_2^* +bu_1^*&= \varepsilon l_1e^{\varepsilon t}\overline{u}_1 -l_1e^{\varepsilon t}\Delta \overline{u}_1 - l_1e^{\varepsilon t}\overline{u}_1 l_2e^{\varepsilon t}\overline{u}_2 + bl_1e^{\varepsilon t}\overline{u}_1\\[1.5mm]
		&=\varepsilon l_1e^{\varepsilon t}\overline{u}_1 + l_1e^{\varepsilon t}\left( \overline{u}_1\overline{u}_2-b\overline{u}_1 \right) - l_1e^{\varepsilon t}\overline{u}_1 l_2e^{\varepsilon t}\overline{u}_2 + bl_1e^{\varepsilon t}\overline{u}_1\\[1.5mm]
		&\le\varepsilon l_1e^{\varepsilon t}\overline{u}_1 + l_1e^{\varepsilon t}\overline{u}_1\overline{u}_2 - l_1l_2e^{\varepsilon t}\overline{u}_1\overline{u}_2\\[1.5mm]
		&=\left\{ \varepsilon + \left( 1-l_2 \right) \overline{u}_2 \right\} l_1e^{\varepsilon t}\overline{u}_1<0,\\[1.5mm]
		\partial_t u_2^* - \Delta u_2^* -au_1^*&=\varepsilon l_2e^{\varepsilon t}\overline{u}_2 - l_2e^{\varepsilon t}\Delta\overline{u}_2 -al_1e^{\varepsilon t} \overline{u}_1\\[1.5mm]
		&=\varepsilon l_2e^{\varepsilon t}\overline{u}_2 + l_2e^{\varepsilon t}a\overline{u}_1 -al_1e^{\varepsilon t} \overline{u}_1\\[1.5mm]
		&=\left\{ \varepsilon l_2\overline{u}_2 + a\left( l_2-l_1 \right) \overline{u}_1 \right\} e^{\varepsilon t}<0,
	\end{align*}
	where we used the fact that \((\overline{u}_1,\overline{u}_2)\) satisfies
	\begin{equation*}
		\left\{
		\begin{aligned}
			&-\Delta \overline{u}_1 = \overline{u}_1\overline{u}_2 -b \overline{u}_1,\\
			&-\Delta \overline{u}_2 = a\overline{u}_1.
		\end{aligned}
		\right.
	\end{equation*}
	Moreover \(\partial_\nu u_1^*+\alpha u_1^*=0\), \(\partial_\nu u_2^*+\beta u_2^*=0\) on \(\partial\Omega\) and \(u_1^*(x,0)=l_1\overline{u}_1(x)\), \(u_2^*(x,0)=l_2\overline{u}_2(x)\). Hence by the comparison principle, we have
	\begin{equation}
		\label{4.8}
		l_1e^{\varepsilon t}\overline{u}_1(x)=u_1^*(x,t)\le u_1(x,t),~~~l_2e^{\varepsilon t}\overline{u}_2(x)=u_2^*(x,t)\le u_2(x,t).
	\end{equation}
	Multiplication of equations of \(\eqref{NR}\) by \(\varphi_1\) and integration by parts yield
	\begin{align}
		\label{4.9}
		&\frac{d}{dt}\left(\int_{\Omega}u_1\varphi_1dx\right) +(b+\lambda_1)\int_{\Omega}u_1\varphi_1dx = \int_{\Omega}u_1u_2\varphi_1dx,\\[1.5mm]
		\label{4.10}
		&\frac{d}{dt}\left(\int_{\Omega}u_2\varphi_1dx\right) + \lambda_1\int_{\Omega}u_2\varphi_1dx +(\beta-\alpha)\int_{\partial\Omega}u_2\varphi_1dS=a\int_{\Omega}u_1\varphi_1dx,
	\end{align}
	where \(\lambda_1\) and \(\varphi_1\) are the first eigenvalue and the corresponding eigenfunction for \(\eqref{eigen}\). We here normalize \(\varphi_1\) so that \(\|\varphi_1\|_{L^1(\Omega)}=1\).
	Substituting \(\eqref{4.10}\) and \(u_1=\frac{1}{a}(\partial_tu_2-\Delta u_2)\) in \(\eqref{4.9}\) and using integration by parts, we get
	\begin{align}
		\label{4.11}&\frac{d}{dt} \left\lbrace \frac{d}{dt}\left(\int_{\Omega}u_2\varphi_1dx\right) + \lambda_1\int_{\Omega}u_2\varphi_1dx +(\beta-\alpha)\int_{\partial\Omega}u_2\varphi_1dS \right\rbrace \\[1.5mm]
		&\nonumber~~+(b+\lambda_1) \left\lbrace \frac{d}{dt}\left(\int_{\Omega}u_2\varphi_1dx\right) + \lambda_1\int_{\Omega}u_2\varphi_1dx +(\beta-\alpha)\int_{\partial\Omega}u_2\varphi_1dS \right\rbrace \\[1.5mm]
		&\nonumber\hspace{-3mm}=\frac{1}{2}\frac{d}{dt}\int_{\Omega}u_2^2\varphi_1dx +\int_{\Omega}|\nabla u_2|^2\varphi_1dx +\frac{\lambda_1}{2}\int_{\Omega}u_2^2\varphi_1dx+ \left(\beta-\frac{\alpha}{2}\right)\int_{\partial\Omega}u_2^2\varphi_1dS,
	\end{align}
	where we used the fact that 
	\begin{align*}
		-\int_{\Omega}(\Delta u_2) u_2\varphi_1dx&=\int_{\Omega}\nabla u_2\cdot\nabla (u_2\varphi_1)dx -\int_{\partial\Omega}(\partial_\nu u_2) u_2\varphi_1dS\\[1.5mm]
		&=\int_{\Omega} |\nabla u_2|^2\varphi_1dx + \int_{\Omega} u_2\nabla u_2\cdot\nabla\varphi_1dx +\beta\int_{\partial\Omega} u_2^2\varphi_1dS\\[1.5mm]
		&=\int_{\Omega} |\nabla u_2|^2\varphi_1dx + \frac{1}{2}\int_{\Omega}\nabla u_2^2\cdot\nabla\varphi_1dx +\beta\int_{\partial\Omega} u_2^2\varphi_1dS\\[1.5mm]
		&=\int_{\Omega} |\nabla u_2|^2\varphi_1dx - \frac{1}{2}\int_{\Omega} u_2^2\Delta\varphi_1dx - \frac{\alpha}{2}\int_{\partial\Omega}u_2^2\varphi_1dS +\beta\int_{\partial\Omega} u_2^2\varphi_1dS\\[1.5mm]
		&=\int_{\Omega} |\nabla u_2|^2\varphi_1dx + \frac{\lambda_1}{2}\int_{\Omega}u_2^2\varphi_1dx + \left(\beta- \frac{\alpha}{2}\right)\int_{\partial\Omega} u_2^2\varphi_1dS.
	\end{align*}
	We here assume \(\beta-\alpha>0\). From \(\eqref{4.8}\), it follows that
	\begin{align*}
		&\frac{\lambda_1}{2}\int_{\Omega}u_2^2\varphi_1dx - (b+\lambda_1)\lambda_1\int_{\Omega}u_2\varphi_1dx\\[1.5mm]
		=&~\frac{\lambda_1}{4}\int_{\Omega}u_2^2\varphi_1dx + \lambda_1 \int_{\Omega}\left\{ \frac{1}{4}u_2-(b+\lambda_1) \right\}u_2\varphi_1dx\\[1.5mm]
		\ge&~ \frac{\lambda_1}{4}\int_{\Omega}u_2^2\varphi_1dx + \lambda_1 \int_{\Omega}\left\{ \frac{1}{4}u_2^*-(b+\lambda_1) \right\}u_2\varphi_1dx\\[1.5mm]
		\ge&~ \frac{\lambda_1}{4}\int_{\Omega}u_2^2\varphi_1dx + \lambda_1 \int_{\Omega}\left\{ \frac{1}{4}me^{\varepsilon t}-(b+\lambda_1) \right\}u_2\varphi_1dx,
	\end{align*}
	where \(m:=\min_{x\in\overline{\Omega}}l_2\overline{u}_2(x)>0\). Hence there exists \(t_1>0\) such that
	\begin{equation}
		\label{4.12}
		\frac{\lambda_1}{2}\int_{\Omega}u_2^2\varphi_1dx - (b+\lambda_1)\lambda_1\int_{\Omega}u_2\varphi_1dx \ge \frac{\lambda_1}{4}\int_{\Omega}u_2^2\varphi_1dx \hspace{10mm}\forall~t\ge t_1.
	\end{equation}
	Similarly, since  
	\begin{align*}
		&\left( \beta-\frac{\alpha}{2} \right)\int_{\partial\Omega}u_2^2\varphi_1dS-(b+\lambda_1)(\beta-\alpha)\int_{\partial\Omega}u_2\varphi_1dS\\[1.5mm]
		=&\frac{1}{2}\left( \beta-\frac{\alpha}{2} \right)\int_{\partial\Omega}u_2^2\varphi_1dS + \int_{\partial\Omega}\left\lbrace \frac{1}{2}\left( \beta-\frac{\alpha}{2} \right)u_2-(b+\lambda_1)(\beta-\alpha) \right\rbrace u_2\varphi_1dS\\[1.5mm]
		\ge& \frac{1}{2}\left( \beta-\frac{\alpha}{2} \right)\int_{\partial\Omega}u_2^2\varphi_1dS + \int_{\partial\Omega}\left\{ \frac{1}{2}\left( \beta-\frac{\alpha}{2} \right)me^{\varepsilon t}  - (b+\lambda_1)(\beta-\alpha) \right\}u_2\varphi_1dS,
	\end{align*}
	there exists \(t_2>0\) such that 
	\begin{align}
		\nonumber\left( \beta-\frac{\alpha}{2} \right)\int_{\partial\Omega}u_2^2\varphi_1dS-(b+\lambda_1)&(\beta-\alpha)\int_{\partial\Omega}u_2\varphi_1dS\\[1.5mm]
		&\ge~\frac{1}{2}\left( \beta-\frac{\alpha}{2} \right)\int_{\partial\Omega}u_2^2\varphi_1dS\hspace{10mm}\forall~t\ge t_2.\label{4.13}
	\end{align}
	Therefore by \(\eqref{4.12}\), \(\eqref{4.13}\) and \(\eqref{4.11}\), we have
	\begin{align}
		\nonumber
		&\frac{d}{dt}\left\{ \frac{d}{dt}\left(\int_{\Omega}u_2\varphi_1dx\right) \right\} + (b+2\lambda_1)\frac{d}{dt}\left(\int_{\Omega}u_2\varphi_1dx\right) + (\beta-\alpha)\frac{d}{dt}\left(\int_{\partial\Omega}u_2\varphi_1 dS\right)\\[1.5mm]
		\ge&\frac{1}{2}\frac{d}{dt}\left(\int_{\Omega}u_2^2\varphi_1dx\right) + \frac{\lambda_1}{4}\int_{\Omega}u_2^2\varphi_1dx+\frac{1}{2}\left( \beta-\frac{\alpha}{2} \right)\int_{\partial\Omega}u_2^2\varphi_1dS\hspace{10mm} \forall~t\ge t_3,\label{4.14}
	\end{align}
	where \(t_3:=t_1\vee t_2\).
	Now we integrate \(\eqref{4.14}\) with respect to \(t\) over \([t_3,t]\) to get
	\begin{align}
		\nonumber
		&\frac{d}{dt}\left\{ \int_{\Omega}u_2\varphi_1dx+(\beta-\alpha)\int_{t_3}^{t}\int_{\partial\Omega}u_2\varphi_1dSd\tau \right\}\\[1mm]
		\ge&~\frac{1}{2}\int_{\Omega}u_2^2\varphi_1dx - (b+2\lambda_1)\int_{\Omega}u_2\varphi_1dx -\frac{1}{2}\int_{\Omega}u_2^2(t_3)\varphi_1dx\nonumber\\[1mm]
		&+ \frac{1}{2}\left( \beta-\frac{\alpha}{2} \right)\int_{t_3}^{t}\int_{\partial\Omega}u_2^2\varphi_1dSd\tau + \int_{\Omega}\partial_t u_2(t_3)\varphi_1dx,\label{4.15}
	\end{align}
	where we neglected positive terms. Moreover we can see that there exists \(t_4>t_3\) such that
	\begin{align}
		\nonumber
		\frac{1}{2}\int_{\Omega}u_2^2\varphi_1dx - &(b+2\lambda_1)\int_{\Omega}u_2\varphi_1dx  \\[1mm]
		&-\frac{1}{2}\int_{\Omega}u_2^2(t_3)\varphi_1dx+ \int_{\Omega}\partial_t u_2(t_3)\varphi_1dx\ge \frac{1}{4}\int_{\Omega}u_2^2\varphi_1dx\label{4.16}
	\end{align}
	for \(t\ge t_4\) by the same argument as before. Therefore from \(\eqref{4.15}\) and \(\eqref{4.16}\), we have
	\begin{align}
		\nonumber
		\frac{d}{dt}&\left\{ \int_{\Omega}u_2\varphi_1dx + (\beta-\alpha)\int_{t_3}^{t}\int_{\partial\Omega}u_2\varphi_1dSd\tau \right\}\\[1mm]
		&\phantom{aaaaaaaaaaaa}\ge~\frac{1}{4}\int_{\Omega}u_2^2\varphi_1dx	+ \frac{1}{2}\left( \beta-\frac{\alpha}{2} \right)\int_{t_3}^{t}\int_{\partial\Omega}u_2^2\varphi_1dSd\tau.\label{4.17}
	\end{align}
	Since \(\|\varphi_1\|_{L^1(\Omega)}=1\), by Schwarz's inequality, we get
	\begin{equation*}
		\frac{1}{4}\int_{\Omega}u_2^2\varphi_1dx\ge\frac{1}{4}\left( \int_{\Omega}u_2\varphi_1dx \right)^2,
	\end{equation*}
	and
	\begin{align*}
		&\frac{1}{2}\left( \beta-\frac{\alpha}{2} \right)\int_{t_3}^{t}\int_{\partial\Omega}u_2^2\varphi_1dSd\tau\\[1.5mm]
		\ge&~\frac{1}{2}(\beta-\frac{\alpha}{2})\frac{1}{\|\varphi_1\|_{L^{\infty}(\Omega)}|\partial\Omega|}\frac{1}{t-t_3}\left\{ \int_{t_3}^{t}\int_{\partial\Omega}u_2\varphi_1dSd\tau \right\}^2\\[1.5mm]
		=&~\frac{1}{2}\frac{\beta-\frac{\alpha}{2}}{\|\varphi_1\|_{L^{\infty}(\Omega)}|\partial\Omega|(\beta-\alpha)^2}\frac{1}{t-t_3}\left\{(\beta-\alpha) \int_{t_3}^{t}\int_{\partial\Omega}u_2\varphi_1dSd\tau \right\}^2.
	\end{align*}
	By the above inequalities and \(\eqref{4.17}\), for \(t\ge t_5:=t_4\vee(t_3+1)\), we finally get 
	\begin{align*}
		&\frac{d}{dt}\left\{ \int_{\Omega}u_2\varphi_1dx+(\beta-\alpha)\int_{t_3}^{t}\int_{\partial\Omega}u_2\varphi_1dSd\tau \right\}\\[1.5mm]
		\ge&\frac{1}{4}\int_{\Omega}u_2^2\varphi_1dx + \frac{1}{2}\left( \beta-\frac{\alpha}{2} \right)\int_{t_3}^{t}\int_{\partial\Omega}u_2^2\varphi_1dSd\tau\\[1.5mm]
		\ge&\frac{1}{4}\left( \int_{\Omega}u_2\varphi_1dx \right)^2 + \frac{1}{2}\frac{\beta-\frac{\alpha}{2}}{\|\varphi_1\|_{L^{\infty}(\Omega)}|\partial\Omega|(\beta-\alpha)^2}\frac{1}{t-t_3}\left\{(\beta-\alpha) \int_{t_3}^{t}\int_{\partial\Omega}u_2\varphi_1dSd\tau \right\}^2\\[1.5mm]
		\ge&C\frac{1}{t-t_3} \left\{ \left( \int_{\Omega}u_2\varphi_1dx \right)^2 + \left((\beta-\alpha) \int_{t_3}^{t}\int_{\partial\Omega}u_2\varphi_1dSd\tau \right)^2 \right\}\\[1.5mm]
		\ge&C\frac{1}{t-t_3} \left\{  \int_{\Omega}u_2\varphi_1dx  + (\beta-\alpha) \int_{t_3}^{t}\int_{\partial\Omega}u_2\varphi_1dSd\tau  \right\}^2,
	\end{align*}
	where \(C\) denotes some general positive constant independent of \(t\).
	Set \(y(t):=\int_{\Omega}u_2\varphi_1dx + (\beta-\alpha)\int_{t_3}^{t}\int_{\partial\Omega}u_2\varphi_1dSd\tau\), then the above inequality yields the following:
	\begin{equation*}
		\left\{
		\begin{aligned}
			&\frac{d}{dt}y(t)\ge\frac{C}{t-t_3} y^2(t)&&t\ge t_5,\\
			&y(t_5)>0.
		\end{aligned}
		\right.
	\end{equation*}
	We can see that there exists \(T^*>t_5\) such that
	\begin{equation}
		\label{blow}
		\lim_{t\rightarrow T^*}y(t)=+\infty.
	\end{equation}
	In order to show the existence of \(T^*\) satisfying \(\eqref{blow}\), it suffices to consider the following ordinary differential equation:
	\begin{equation*}
		\left\{
		\begin{aligned}
			&\frac{d}{dt}\tilde{y}(t) = \frac{C}{t-t_3} \tilde{y}^2(t)&&t\ge t_5,\\
			&\tilde{y}(t_5)>0.
		\end{aligned}
		\right.
	\end{equation*}
	Since \(\frac{d}{dt}\tilde{y}(t)>0\) for all \(t\ge t_5\) and \(\tilde{y}(t_5)>0\), it is clear that \(\tilde{y}(t)>0\) for all \(t\ge t_5\). Divide both sides by \(\tilde{y}^2(t)\) and integrate with respect to \(t\) on \([t_5,t]\), then we have
	\begin{equation*}
		\frac{1}{\tilde{y}^2(t)}\frac{d}{dt} \tilde{y}(t) = \frac{C}{t-t_3},
	\end{equation*} 
	\begin{equation*}
		\int_{\tilde{y}(t_5)}^{\tilde{y}(t)} \frac{1}{y^2} dy = C \log\frac{t-t_3}{t_5-t_3},
	\end{equation*}
	\begin{equation*}
		-\frac{1}{\tilde{y}(t)} + \frac{1}{\tilde{y}(t_5)} = C \log\frac{t-t_3}{t_5-t_3}.
	\end{equation*}
	Therefore we have 
	\begin{equation*}
		\tilde{y}(t)=\frac{1}{\frac{1}{\tilde{y}(t_5)} - C \log\frac{t-t_3}{t_5-t_3}}.
	\end{equation*}
	Hence there exists \(\tilde{T}>t_5\) satisfying 
	\begin{equation*}
		\frac{1}{\tilde{y}(t_5)} - C \log\frac{\tilde{T}-t_3}{t_5-t_3}=0
	\end{equation*}
	such that 
	\begin{equation*}
		\lim_{t\rightarrow \tilde{T}}\tilde{y}(t)=+\infty.
	\end{equation*}
	Thus \(\eqref{blow}\) holds by comparison theorem for ordinary differential equations.
	This contradicts the assumption that \((u_1,u_2)\) exists globally. \\

	For the case of \(\frac{\alpha}{2}\le \beta \le \alpha\), we can prove the same result with a slight modification. Actually, we get from \(\eqref{4.11}\)
	\begin{align*}
		&\frac{d}{dt} \left\lbrace \frac{d}{dt}\left(\int_{\Omega}u_2\varphi_1dx\right) + \lambda_1\int_{\Omega}u_2\varphi_1dx +(\beta-\alpha)\int_{\partial\Omega}u_2\varphi_1dS \right\rbrace \\[1.5mm]
		&\nonumber~~+(b+\lambda_1) \left\lbrace \frac{d}{dt}\left(\int_{\Omega}u_2\varphi_1dx\right) + \lambda_1\int_{\Omega}u_2\varphi_1dx \right\rbrace \\[1.5mm]
		&\nonumber\hspace{-3mm}\ge\frac{1}{2}\frac{d}{dt}\int_{\Omega}u_2^2\varphi_1dx +\frac{\lambda_1}{2}\int_{\Omega}u_2^2\varphi_1dx.
	\end{align*}
	Using \(\eqref{4.12}\) and integrating above inequality with respect to \(t\) over \([t_1,t]\), we have
	\begin{align*}
		&\frac{d}{dt}\left(\int_{\Omega}u_2\varphi_1dx \right)- \int_{\Omega}\partial_t u_2(t_1)\varphi_1dx + (\beta-\alpha)\int_{\partial\Omega}u_2\varphi_1dS -  (\beta-\alpha)\int_{\partial\Omega}u_2(t_1)\varphi_1dS\\[1.5mm]
		\ge&\frac{1}{2}\int_{\Omega}u_2^2\varphi_1dx-\frac{1}{2}\int_{\Omega}u_2^2(t_3)\varphi_1dx - (b+2\lambda_1)\int_{\Omega}u_2\varphi_1dx + (b+2\lambda_1)\int_{\Omega}u_2(t_1)\varphi_1dx .
	\end{align*}
	Repeating the same arguments as for \(\eqref{4.12}\), we see that there exists \(t_6\ge t_1\) such that
	\begin{align*}
		&\frac{1}{2}\int_{\Omega}u_2^2\varphi_1dx-\frac{1}{2}\int_{\Omega}u_2^2(t_3)\varphi_1dx - (b+2\lambda_1)\int_{\Omega}u_2\varphi_1dx \\[1.5mm]
		&+ \int_{\Omega}\partial_tu_2(t_1)\varphi_1dx + (\beta-\alpha)\int_{\partial\Omega}u_2(t_1)\varphi_1dS\\[1.5mm]
		\ge~&\frac{1}{4}\int_{\Omega}u_2^2\varphi_1dx
	\end{align*}
	for all \(t\ge t_6\). From these inequalities and Schwarz's inequality, it holds that
	\begin{equation*}
		\frac{d}{dt}\left(\int_{\Omega}u_2\varphi_1dx \right) \ge \frac{1}{4}\left( \int_{\Omega}u_2\varphi_1dx \right)^2\hspace{5mm}\forall ~t\ge t_6.
	\end{equation*} 
	Therefore we can get the following differential inequality:
	\begin{equation*}
		\left\{
		\begin{aligned}
			&\frac{d}{dt} y(t) \ge y^2(t) &&\hspace{5mm}t\ge t_6,\\
			&y(t_6)>0,
		\end{aligned}
		\right.
	\end{equation*}
	where \( y(t)=\int_{\Omega}u_2\varphi_1dx\). It is easy to see that there exists \(T^{**}>t_6\) such that 
	\begin{equation*}
		\lim_{t\rightarrow T^{**}}y(t)=+\infty.
	\end{equation*}  
	This leads to a contradiction.
\end{proof}

\begin{remark}\upshape
	Since the blow-up result is proved by contradiction, there is no knowing if \(\|u_1(t)\|_{L^{\infty}}\) and \(\|u_2(t)\|_{L^{\infty}}\) blow up simultaneously.
	However we can show by another argument that \(L^{\infty}\)-norms of \(u_1\) and \(u_2\) blow up at the same time, i.e., there exists \(T>0\) such that 
	\begin{equation*}
		\lim_{t\rightarrow T}\|u_1(t)\|_{L^{\infty}(\Omega)} = \infty~~~\mbox{and}~~~\lim_{t\rightarrow T}\|u_2(t)\|_{L^{\infty}(\Omega)} = \infty.
	\end{equation*}
	In fact, multiplying the first equation of \(\eqref{NR}\) by \(|u_1|^{r-2}u_1\) and using integration by parts and similar calculation in the proof of Theorem \ref{LWP}, we obtain
	\begin{equation}
	\label{same1}
	\frac{d}{dt}\|u_1(t)\|_{L^r(\Omega)}\le \|u_2(t)\|_{L^{\infty}(\Omega)} \|u_1(t)\|_{L^r(\Omega)} \hspace{10mm}\forall t\in[0,T).
	\end{equation}
	From the second equation of \(\eqref{NR}\), we also have 
	\begin{equation}
	\label{same2}
	\|u_2(t)\|_{L^{\infty}(\Omega)} \le \|u_{20}\|_{L^{\infty}(\Omega)} + a \int_{0}^{t}\|u_1(\tau)\|_{L^{\infty}(\Omega)}d\tau \hspace{10mm}\forall t\in[0,T).
	\end{equation}
	Suppose that
	\begin{equation*}
	\lim_{t\rightarrow T} \|u_1(t)\|_{L^{\infty}(\Omega)} = \infty\hspace{7mm}\mbox{and}\hspace{7mm}M_2:=\sup_{0\le t\le T}\|u_2(t)\|_{L^{\infty}(\Omega)}<\infty,
	\end{equation*}
	then it follows from \(\eqref{same1}\)
	\begin{equation*}
	\frac{d}{dt} \|u_1(t)\|_{L^r}\le M_2 \|u_1(t)\|_{L^r(\Omega)} \hspace{10mm}\forall t\in[0,T). 
	\end{equation*}
	By Gronwall's inequality, we get
	\begin{equation*}
	\|u_1(t)\|_{L^r(\Omega)} \le \|u_{10}\|_{L^r(\Omega)} e^{M_2t} \le \|u_{10}\|_{L^r(\Omega)} e^{M_2T} \hspace{10mm}\forall t\in[0,T).
	\end{equation*}
	Letting \(r\) tend to \(\infty\), we obtain
	\begin{equation*}
	\|u_1(t)\|_{L^{\infty}(\Omega)} \le \|u_{10}\|_{L^{\infty}(\Omega)} e^{M_2T} \hspace{10mm}\forall t\in[0,T),
	\end{equation*}
	which contradicts the fact \(\lim_{t\rightarrow T}\|u_1(t)\|_{L^{\infty}(\Omega)}=\infty\). Next, suppose that
	\begin{equation*}
	M_1:=\sup_{0\le t\le T}\|u_1(t)\|_{L^{\infty}(\Omega)}<\infty\hspace{7mm}\mbox{and}\hspace{7mm}\lim_{t\rightarrow T} \|u_2(t)\|_{L^{\infty}(\Omega)} = \infty,
	\end{equation*}
	then by \(\eqref{same2}\) we see that
	\begin{align*}
	\|u_2(t)\|_{L^{\infty}(\Omega)}	&\le \|u_{20}\|_{L^{\infty}(\Omega)} + aM_1T \hspace{10mm}\forall t\in[0,T).
	\end{align*}
	Letting \(t\) tend to \(T\), we get contradiction. Thus we see that \(u_1\) and \(u_2\) blow up at the same time.

\end{remark}


\begin{thebibliography}{99}
	\bibitem{B0} H. Br\'ezis, ``\textit{Op\'erateurs Maximaux Monotones et Semigroupes de Contractions dans Espace de Hilbert},'' North Holland, Amsterdam, The Netherlands, 1973.
	\bibitem{B1} H. Br\'{e}zis, Monotonicity methods in Hilbert spaces and some applications to nonlinear partial differential equations, in Contributions to Nonlinear Funct. Analysis, Madison, 1971, (E. Zarantonello ed.), Acad. Press, 1971, p. 101-156.
	\bibitem{C1} H. Chen, Positive Steady-state Solutions of a Non-linear Reaction-Diffusion System, \textit{Mathematical Methods in the Applied Sciences}, \textbf{20} (1997), 625-634.
	\bibitem{C2}H. Chen, The dynamics of a nuclear reactor model, \textit{Nonlinear Analysis, Theory, Methods} \& \textit{Applications}, \textbf{30}, No.6 (1997), 3409-3416.
	\bibitem{D} E. DiBenedetto, ``\textit{Degenerate parabolic equations, Universitext},'' Springer-Verlag, New York, 1993
	\bibitem{EOE} M. Efendiev, M. \^Otani and H. Eberl, Mathematical analysis of an \textit{in vivo} model of mitochondrial swelling, \textit{Discrete Contin. Dyn. Syst. series A}, \textbf{37}, No.7 (2017), 4131-4158.
	\bibitem{GW1} Y. G. Gu and M. X. Wang, A semilinear parabolic system arising in the nuclear reactors, \textit{Chinese Sci. Bull.}, \textbf{39}, No.19 (1994), 1588-1592.
	\bibitem{GW2} Y. G. Gu and M. X. Wang, Existence of positive stationary solutions and threshold results for a reaction-diffusion system, \textit{J. Differential Equations}, \textbf{130}, No.0143 (1996), 277-291.
	\bibitem{HO1} J. Harada and M. \^{O}tani, Multiple solutions for semilinear elliptic equations with nonlinear boundary conditions. \textit{Electronic Journal of Differential Equations}, \textbf{2012}, No. 33 (2012), 1-9.
	\bibitem{JLZ} F. Jiang, G. Li and J. Zhu, On the semilinear diffusion system arising from nuclear reactors, \textit{Appl. Anal.} \textbf{93}, No.12 (2014), 2608-2624.
	\bibitem{KC1} W. E. Kastenberg and P. L. Chambr\'{e}, On the stability of nonlinear space-dependent reactor kinetics, Nucl. Sci. Eng., \textbf{31} (1968), 67-79.
	\bibitem{Kr1} M. A. Krasnosel'skii, Fixed points of cone-compressing or cone-extending operators, \textit{Soviet Mathematics. Doklady}, vol.1 (1960), 1285-1288.
	\bibitem{K1} M. K. Kwong, On Krasnoselskii's cone fixed point theorem, Fixed Point Theory Appl. 2008, 1-18.	
	\bibitem{LSU} O. A. Lady\v{z}enskaja, V. A. Solonnikov and N. N. Ural'ceva, \textit{Linear and quasilinear equations of parabolic type}, Translations of Mathematical Monographs \textbf{23}, Amer. Math. Soc. 1968.
	\bibitem{LU} O. A. Lady\v{z}enskaja, and N. N. Ural'ceva, \textit{Linear and quasilinear elliptic equations}, Academic Press, New York-London 1968.
	\bibitem{MO1} E. Minchev and M. \^Otani, \(L^{\infty}\)-energy method for a parabolic system with convection and hysteresis effect, \textit{Comm. Pure Appl. Anal.}, \textbf{17}, No.4 (2018), (to appear).
	\bibitem{N1} R. Nittka, Regularity of solutions of linear second order elliptic and parabolic boundary value problems on Lipschitz domains, \textit{J. Differential Equations}, \textbf{251}, No.4-5 (2011), 860-880.
	\bibitem{O1} M. \^{O}tani, \(L^{\infty}\)-energy method, basic tools and usage, \textit{Differential Equations, Chaos and Variational Problems,} Progress in Nonlinear Differential Equations and Their Applications, \textbf{75}, Ed. by Vasile Staicu, Birkhauser (2007), 357-376.
	\bibitem{RT} A. Rodr\'{\i}guez-Bernal and A. Tajdine, Nonlinear balance for reaction-diffusion equations under nonlinear boundary conditions: dissipativity and blow-up, \textit{J. Differential Equations}, \textbf{169}, no. 2 (2001), 332-372.
	\bibitem{R} F. Rothe, ``\textit{Global Solutions of Reaction-Diffusion Systems},'' Lecture Notes in Mathematics, \textbf{1072}, Springer-Verlag, Berlin, 1984.
	\bibitem{T1} R. Temam,``\textit{Infinite-Dimensional Dynamical Systems in Mechanics and Physics}," Springer-Verlag, New York, 1997.
	\bibitem{Y} Z. Q. Yan, The global existence and blowing-up property of solutions for a nuclear model. \textit{J. Math. Anal. Appl.}, \textbf{167}, No. 1 (1992), 74-83.
\end{thebibliography}
\end{document}